\colorlet{darkgreen}{green!50!black}
\newtheorem{thm}{Theorem}
\newtheorem{prop}[thm]{Proposition}
\newtheorem{deff}[thm]{Definition}
\newtheorem{lem}[thm]{Lemma}
\theoremstyle{definition}
\newtheorem{rem}[thm]{Remark}
\newtheorem*{remarque*}{Remarque}
\newtheorem*{remark*}{Remark}
\newcommand{\citeep}[1]{\citeeauthor*{#1} (\citeeyear{#1}) \citeep{#1}}
\newcommand{\G}{\mathcal G_\mathcal R}
\newcommand{\R}{\mathcal{R}}
\let\phi=\varphi
\let\epsilon=\varepsilon
\newcommand{\rems}[1]{\textcolor{black}{#1}}
\newcommand{\remst}[1]{}
\newcommand{\remstf}[1]{}
\title
[Green's functions with oblique Neumann boundary conditions in the quadrant]
{Green's functions with oblique Neumann boundary conditions in \rems{the quadrant}}
\author{S.\ Franceschi}
\address{Laboratoire de Probabilités, Statistique et Modélisation, Sorbonne Universit\'e,
        4 Place Jussieu, 75252 Paris Cedex 05, France
        } \email{Sandro.Franceschi@upmc.fr}
\keywords{Green's function; Oblique Neumann boundary condition; Obliquely reflected Brownian motion in a wedge; SRBM; Laplace transform; Conformal mapping; Carleman boundary value problem}
\begin{document}

\maketitle

%\date{\today}

\begin{abstract}
We study semi-martingale obliquely reflected Brownian motion (SRBM) with drift in \rems{the first quadrant} of the plane in the transient case. 
Our main result determines %establishes 
a general explicit integral expression for the 
%Laplace transform 
\rems{moment generating function}
of Green's functions of this process.
To that purpose we establish %determine 
a new kernel functional equation connecting %the 
%Laplace transforms 
\rems{moment generating functions}
of Green's functions inside the \rems{quadrant} and on its edges.
This is reminiscent of the recurrent case where a functional equation derives from the basic adjoint relationship which characterizes the stationary distribution.
%Thanks to 
This equation %we find 
leads us to a %doubly 
non-homogeneous Carleman boundary value problem. Its resolution %leads to
%allows to obtain 
provides
%our main
a 
%general explicit 
formula 
%which express 
for
the 
%Laplace transform %of the Green
%function 
\rems{moment generating function}
in terms of contour integrals and a conformal mapping.
%generalized Chebyshev polynomials.
\end{abstract}

\footnote{Version of \today}

%\tableofcontents

%\newpage

\section{Introduction}
\label{sec:introduction}

\subsection{Overview}
\label{subsec:overview}

\subsubsection*{Main goal}
In this article, we consider ${\rems{Z=(Z(t), t \geqslant 0)}}$, an \textit{obliquely reflected Brownian motion with drift} in $\mathbb{R}_+^2$ starting from $x$. \rems{Denote the} transition \rems{semigroup by $(P_t)_{t\geqslant 0}$}. We will focus on the quadrant case because thanks to a simple linear transform it is easy to extend all the results to any wedge, see \cite[Appendix A]{franceschi_explicit_2017}. %Appendix~\ref{appendix:cones}.
\rems{This process behaves as a Brownian motion with drift vector $\mu$ and covariance matrix $\Sigma$ in the interior of this quadrant} and reflects
instantaneously in a constant direction %of reflection 
$R_i$ for $i=1,2$ on each edge, see Figure~\ref{fig:rebondderive} and Proposition~\ref{def:MBsemimartingale} for more details. We are \rems{interested} in the case where this process is transient, that is when the parameters make the process \rems{tend} to infinity almost surely, see Section \ref{subsec:recurrence}.

\begin{figure}[H]
\center{
\includegraphics[scale=0.5]{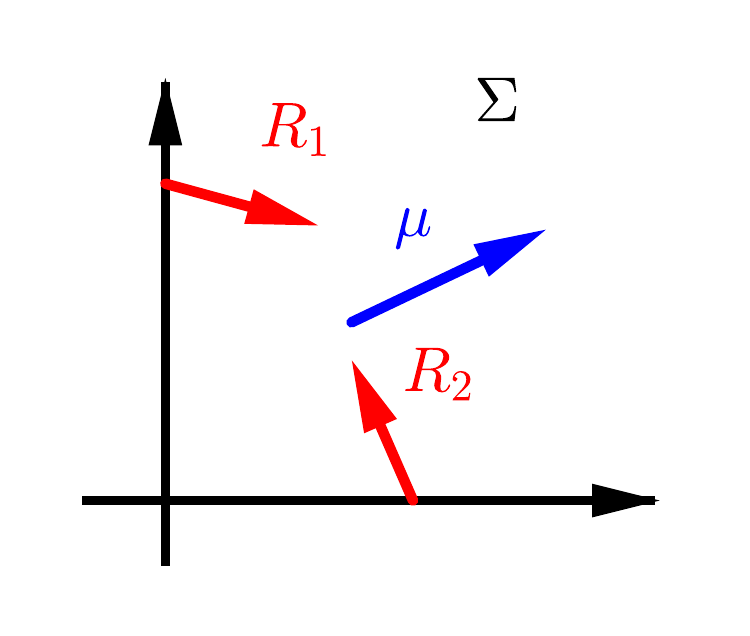}
}
\caption{Reflection vectors and drift
%à gauche et trajectoire à droite
}\label{fig:rebondderive}
\end{figure}

The main goal of this article 
is to study $G$, %determine %find
\rems{\textit{Green's measure}} (\textit{potential kernel}) of ${\rems{Z}}$:
$$
G(x, A) := \mathbb{E}_{x} \left[ \int_{0}^{\infty} \mathbf{1}_A ({\rems{Z(t)}}) \, \mathrm{d} t \right]
= \int_0^{\infty} P_t(x ,A)\, \mathrm{d} t
$$
which represents the mean time spent by the process in some measurable set $A$
%%\subset\mathbb{R}_+^2$.
of the quadrant. Let us remark that if $A$ is bounded and if ${\rems{Z}}$ is transient then $G(x,A)$ is finite. 
The density of the measure $G$ \rems{with respect} to the Lebesgue measure is called\remst{ the} \textit{Green's function} and is equal to%worth
$$g(x,\cdot) :=\int_0^{\infty} p_t(x,\cdot)\, \mathrm{d} t,$$ if we assume that $p_t$ is a transition density for ${\rems{Z(t)}}$.
The kernel $G$ defines a potential operator
$$
G f(x):
=\mathbb{E}_{x} \left[ \int_0^{\infty}  f({\rems{Z(t)}}) \, \mathrm{d} t \right] 
=
\int_{\mathbb{R}_+^2}  f(y) \ g(x,y) \,  \mathrm{d} y
,
$$
%where $f$ is a positive measurable function.
for every positive measurable function $f$.
%The Green function $G^{z_0}(A)$ measure the expectation of the total amount of time spend in $A$ starting from $z_0$.
%As it can be done in the recurrent case for the invariant measure, we 
We define $H_1$ and $H_2$ the \textit{boundary Green's measures} on the edges
such that for $i=1,2$,
$$
H_i(x,A)
:= 
%\int_0^{\infty} P_t(x,A)\, \mathrm{d} L^i_t
%= 
\mathbb{E}_{x} \left[ \int_0^\infty {\mathbf{1}}_{A} ({\rems{Z(t)}})\,  \mathrm{d}{\rems{L_i(t)}} \right] 
$$
where we integrate with respect to ${\rems{L_i(t)}}$, %is 
the local time of the process on the edge $z_i=0$.
The support of $H_1$ lies on the \rems{vertical axis} %y-axe 
and the \rems{support} of $H_2$ lies on the 
%x-axe
\rems{horizontal axis}. We can say that $H_i (x,A)$ represents the mean local time spent on the corresponding edge. 
When it exists, the density of the measure $H_i$ \rems{with respect} to the Lebesgue measure is denoted $h_i$
and the boundary potential kernel is given by
 \begin{equation*}
\label{intro:eq:greenbordformule}
H_i f (x):=
\mathbb{E}_{x} \left[\int_0^\infty f({\rems{Z(t)}}) \, \mathrm{d}L^i_t \right] = \int_{\mathbb{R}_+^2} f(y) h_i(x,y) \, \mathrm{d}y.
\end{equation*}
In this article we determine an explicit formula for $\psi^x$ and $\psi^x_i $ the Laplace transforms
%(which can be seen as generating functions in the continuous framework) 
of $g$ and $h_i$ \rems{usually named \textit{moment generating
functions}}, defined by
\begin{equation}
\label{eq:defpsi}
\psi^x (\theta)
%= G \exp(\theta \cdot)(x)
:=\mathbb{E}_{x} \left[ \int_0^{\infty}  e^{\theta \cdot  {\rems{Z(t)}}} \, \mathrm{d} t \right]
\text{ and }
\psi_i^x (\theta):= 
%H_i \exp
\mathbb{E}_{x} \left[ \int_0^{\infty}  e^{ \theta \cdot  {\rems{Z(t)}} }  \,  \mathrm{d} {\rems{L_i(t)}}  \right]
\end{equation}
where $\theta=(\theta_1,\theta_2)\in\mathbb{C}^2$. %In the following 
Thereafter we will often omit to write the $x$. Furthermore we notice that the functions $\psi_i$ depend on only one variable. We will then denote them by $\psi(\theta)$, $\psi_1(\theta_2)$ and $\psi_2(\theta_1)$.

\subsubsection*{Context}

Obliquely reflected Brownian motion in the quadrant and in orthants of any dimensions was introduced and extensively studied in the eighties by Harrison, Reiman, Varadhan and Williams \cite{HaRe-81,HaRe-81b,
varadhan_brownian_1985,
williams_recurrence_1985,
Williams-85}.
The initial motivation for the study of this kind of processes was because it serves as an approximation of large queuing networks as we can see in
\cite{Foschini,reiman_84_open,
foddy_analysis_1984,
baccelli_analysis_1987,harrison_brownian_1987}.
%The litterature is full of articles studying reflected Brownian motion in quadrant but in orthants too of any dimension as in .
Recurrence or transience in two dimensions, which is an important aspect for us, was studied in \cite{williams_recurrence_1985,
dai_steady-state_1990,
hobson_recurrence_1993}.
In higher dimensions the problem is more complex, see for example 
\cite{Ch-96,
BrDaHa-10,
Br-11,
DaHa-12}.\remst{ Otherwise} The intertwining relations of obliquely reflecting Brownian motion have been studied in \cite{dubedat_reflected_2004,kager_reflected_2005},
its Lyapunov functions in \cite{dupuis_lyapunov_1994},
its cone points in
\cite{lega-87} and its existence in non-smooth planar domains and its links with complex and harmonic analysis in 
\cite{burdzy_obliquely_2017}. \rems{Some articles link SRBM in the orthant to financial models as in \cite{banner2005,ichiba2011}.}
Such a process \rems{and these financial models are} also related to competing Brownian particle systems as in \cite{sarantsev_infinite_2017,
bruggeman_Sarantsev_multiple_2018}.
%Finally  
Finally, some other related stochastic processes have been studied too as two-dimensional oblique Bessel processes in \cite{Lep} and two-dimensional obliquely sticky Brownian motion in \cite{dai_stationary_sticky_2018}.

% On se réfère aussi à  pour l'analyse par des techniques d'analyse complexe du mouvement Brownien réfléchi dans des domaines planaires bornés.

\subsubsection*{Green's functions and invariant measures}

%In the recurrent case, invariant measure is the counterpart of the Green measure in the transient case.
Green's functions and invariant measures are two similar concepts, the first \rems{dealing with} the transient case and the second the  recurrent case.
Indeed, in the transient case the process spends a finite time in a bounded set while in the recurrent case it spends an infinite time in it.
Thus Green's measure may be interpreted as the average time spent in some set while ergodic theorems say that the invariant measure is the average \textit{proportion} of time spent in some set. 

%Indeed if $A$ is some bounded set, the process spend a finite time in $A$ in the transient case %when the process is transient 
%and an infinite time in the recurrent case. % when it is recurrent. 
%While the Green measure of $A$ is the average time spend in this set, ergodic theorems says that the invariant measure of $A$ is the average proportion of time spend in the set.

In the discrete setting, Green's functions of random walks in the quadrant have been studied in several articles, as in the reflecting case in \cite{kurkova_martin_1998} or in the absorbed case in \cite{kourkova_random_2011}. % and  Autre??.
To our knowledge it seems that in the continuous setting, Green's functions of reflected Brownian motion in cones has not been studied yet (except in dimension one, see \cite{Chen_basic_1999}).
%Basic adjoint relation for transient and stationary
%analysis of some Markov processes 

On the other hand, 
the invariant measure of this kind of 
processes %(or the stationary distribution) 
has been deeply studied in the literature: %such processes
the asymptotics of the stationary distribution %have been determined
is the subject of many articles as \cite{harrison_reflected_2009,
 dai_reflecting_2011,
 dai_stationary_2013,
 franceschi_asymptotic_2016,Sa-+1}, 
numerical methods to compute the stationary distribution have been developed in \cite{dai_steady-state_1990,dai_reflected_1992}
and explicit expressions for the stationary distribution are found in some particular cases in
\cite{foddy_analysis_1984,Foschini,
baccelli_analysis_1987,
harrison_multidimensional_1987,
dieker_reflected_2009,
franceschi_tuttes_2016,
BoElFrHaRa_algebraic_2018} 
and in the general case in
\cite{franceschi_explicit_2017}.

%
%the possible particular (e.g., product) form of its stationary distribution \citet{harrison_multidimensional_1987,
%dieker_reflected_2009}, 
%
% \citet{foddy_analysis_1984,Foschini} (s'occupant du cas particulier où le mouvement Brownien a pour matrice de covariance l'identité dans \cite{foddy_analysis_1984} et avec des conditions de symétrie très fortes sur la dérive et les vecteurs de réflexion dans \cite{Foschini}), \citet{baccelli_analysis_1987} (sur une diffusion ayant un comportement assez spécial aux bords), %\cite{HaWi87}
%\citet{harrison_multidimensional_1987,
%dieker_reflected_2009} (au sujet du cas spécial où la mesure stationnaire est exponentielle). Le chapitre \ref{chapitretuttelabel} issu de l'article \cite{franceschi_tuttes_2016}
%détermine lui une formule explicite dans le cas particulier des réflexions orthogonales. Enfin le chapitre \ref{chapitreexplicitlabel} issu de l'article \cite{franceschi_explicit_2017}
%résout totalement le problème dans le cas général. 

% \cite{Sa-+2}
%\citet{Sa-15}

%\begin{description}
%\item[mot1 :] définition1 ;
%\item[mot2 :] définition2.
%\end{description}

%\subsection{Potential theory}
%The Green functions
%$$g^{z_0} (z) :=\int_0^{\infty} p(t,z_0,z)\mathrm{d} t,$$ if we assume that ${\rems{Z(t)}}$ admit $p(t,z_0,z)$ as transition density.

\subsubsection*{Oblique Neumann boundary problem}
%\subsubsection*{Oblique Neumann boundary conditions and ORBM in the quadrant}
Green's functions and invariant measures of Markov processes are 
%studied a lot for their link with 
central in 
potential theory and in ergodic theorems for additive functionals. 
%, cite {Doob "classical ..."}
%\citet{
%hunt_1957_markovprocesspotentialtheory}
%\citet{
%revuz_70_mesuresfonctionaddmarkiv}
%{ Blumenthal Getor }.
%Focus on the fact 
In particular they give a probabilistic interpretation to the solution\rems{s} of some partial differential equations. Appendix \ref{appendix:potentialtheory} illustrates this. %in an informal way. 
Our case is especially complicated because we consider a non-smooth 
unbounded domain, \rems{and reflection at the boundary is oblique.}
%unbounded domain and that our derivative boundary conditions are oblique.

\rems{Consider ${\rems{Z}}$, an obliquely reflected Brownian motion with drift vector $\mu$, covariance matrix $\Sigma$, and reflection matrix $R$. Its first and second columns $R^1$ and $R^2$ form reflection vectors at the faces $\{(0,z) | z\geqslant 0 \}$ and $\{(z,0) | z\geqslant 0 \}$.} Its generator inside the quarter plane $\mathcal{L}$ and its dual generator $\mathcal{L}^*$ are equals to
\begin{equation}
\label{eq:def_L_generateur}
\mathcal{L}f=\frac{1}{2} \nabla \cdot \Sigma \nabla + \mu \cdot \nabla 
\quad \text{and} \quad
\mathcal{L}^* f=\frac{1}{2} \nabla \cdot \Sigma \nabla - \mu \cdot \nabla .
\end{equation}
\citet[(8.2) and (8.3)]{HaRe-81}
derive (informally) the \textit{backward} and the \textit{forward equations} (with boundary and initial conditions) for $p_t(x,y)$, the transition density of the process.
The forward equation (or Fokker-Planck equation) may be written as
$$
\begin{cases}
\mathcal{L}^*_y p_t(x,y) = \partial_t p_t(x,y),
\\
\partial_{R_i^*} p_t(x,y ) - 2\mu_i p_t(x,y ) = 0 \text{ if } y_i =0,
\\
p_0(x,\cdot)=\delta_x,
\end{cases}
$$
where
$$
R^* =2\Sigma-R \ \text{diag}(R)^{-1} \text{diag}( \Sigma)\rems{,}
$$
\remst{and denoting }$R^*_i$ \rems{is} its $i$th column and $\partial_{R_i^*} = R_i^* \cdot \nabla_y$ the derivative along $R^*_i$ on the boundary.
%(Be careful reading %of 
%\cite{HaRe-81}, the notations are not the same because they use row vectors while we use column vectors.)
\rems{(In 
\cite{HaRe-81}, notation is different: Row vectors instead of column vectors.)}
%attention notations pas les même dans harrison reiman car vecteurs lignes!
Letting $t$ going to infinity in the forward equation, Harrison and Reiman conclude that, in the positive recurrent case, \rems{the density $\pi$ of the stationary distribution} satisfies the following steady-state equation \cite[(8.5)]{HaRe-81}
$$
\begin{cases}
\mathcal{L}^* \pi  =0,
\\
\partial_{R_i^*} \pi - 2\mu_i \pi = 0 \text{ if } y_i =0.
\end{cases}
$$
In the transient case, integrating the forward equation in time from $0$ to infinity suggests that the Green's function $g$ satisfies %should satisfy 
the following \rems{partial differential equation with \textit{Robin boundary condition} (specification of the values of a linear combination of a function and its derivative on the boundary)}
\begin{equation}
\label{eq:robineq}
\begin{cases}
\mathcal{L}^*_y g(x,\cdot) = - \delta_x,
\\
\partial_{R_i^*} g(x,\cdot) - 2\mu_i g(x,\cdot) = 0 \text{ if } y_i =0.
\end{cases}
\end{equation}
A similar equation holds in dimension one, see \eqref{eq:PDEdim1}. The Green's function $g$ of the obliquely reflected Brownian motion in the quadrant is then a fundamental solution of the dual operator $\mathcal{L}^*$. Together with the boundary Green's functions $h_i$ they should allow to solve the following oblique Neumann boundary problem
$$
\begin{cases}
\mathcal{L} u = - f & \text{in } \mathbb{R}_+^2,
\\
\partial_{R_i} u = \phi_i & \text{if } y_i =0,
\end{cases}
$$
where $\partial_{R_i} = R_i \cdot \nabla_y$ is the derivative along $R_i$. %We think that 
If a solution $u$ exists, it should satisfy 
$$u =Gf +H_1\phi_1 +H_2\phi_2.$$
One may see Appendix \ref{appendix:potentialtheory} to better understand this thought.

\subsection{Main results and strategy}
%
%random walks in cones which are a large field of study both in combinatorics and probability. related to ORBM in quadrant
\subsubsection*{Functional equation}
To find $\psi$ and $\psi_i$ \rems{the moment generation functions} of Green's functions, we will establish in Proposition \ref{propeqfoncgreen} a new kernel functional equation connecting what happens inside the quadrant and on its boundaries, namely
\begin{equation}
\label{eq:eqfunctintro}
- \gamma (\theta) \psi (\theta) = \gamma_1 (\theta) \psi_1 (\theta_2) + \gamma_2 (\theta) \psi_2 (\theta_1) + e^{\theta \cdot x}
\end{equation}
where $x$ is the starting point and \rems{the \textit{kernel}} $\gamma$ and $\gamma_i$ are some polynomials given in equation~\eqref{intro:def:gamma}. To our knowledge this formula \rems{has not yet appeared} in the literature. Such an equation is reminiscent of the balance equation satisfied by \rems{the moment generation function} of the invariant measure in the recurrent case which derives from the \textit{basic adjoint relationship}, see \cite[(2.3) and (4.1)]{dai_reflecting_2011} \rems{and \cite[(5)]{franceschi_explicit_2017}}.  The additional term $e^{\theta \cdot x}$ depending on the starting point makes this equation differ from the one of the recurrent case. 
%\rems{This equation is a key point of the study of the Green's functions and play the very same role that \cite[(5)]{franceschi_explicit_2017} to study the stationary distribution.} 
It reminds also of the several kernel equations obtained in the discrete setting in order to study random walks and count walks in the quadrant \cite{fayolle_random_2017,kourkova_functions_2012}. %A similar additional term also appear in the discrete case in the functional equation linking generating functions. %, see \cite{}. 

% To our knowledge this formula does not appear yet in the literature.

\subsubsection*{Analytic approach}
In the seventies, \citet{malysev_analytic_1972,
fayolle_two_1979} introduced an analytic approach to solve 
%this kind of 
such functional equations. This method is presented in the famous %yellow little 
book of \citet{fayolle_random_2017}. Since then, it has been used a lot in the discrete setting in order to solve many problems as counting walks, studying Martin boundaries, determining invariant measures or Green's functions, see
\cite{kurkova_martin_1998,
kourkova_random_2011,
kourkova_functions_2012,
bousquet-melou_walks_2010,
bernardi_counting_2015}.
%Such an analitic approach 
This approach has also been used in the continuous setting in order to study stationary distributions in a few articles as in \cite{foddy_analysis_1984,Foschini,
baccelli_analysis_1987,
franceschi_explicit_2017}. % or in the thesis THESE. 
However, to our knowledge it is %(one of?) 
the first time that this method is used to find Green's functions in the continuous case.
To obtain an explicit expression of the Laplace transforms %thanks to 
using this analytic approach we will go through the following steps:
\begin{enumerate}[label={\rm (\roman{*})},ref={\rm (\roman{*})}]
     \item\label{enumi:functional_equation} Find a functional equation, see Section \ref{subsec:functional_equation};
     \item\label{enumi:Riemann} %Introduce the Riemann surface linked to the kernel 
     Study the kernel (and its related Riemann surface) and extend meromorphically the Laplace transforms, %on it 
     see Sections \ref{subsec:noyau} and \ref{subsec:continuation};
     \item\label{enumi:BVP_1} Deduce from the functional equation a boundary value problem (BVP), see Section \ref{subsec:carlemanBVP};%, BVP
     \item\label{enumi:BVP_2}  Find some conformal \rems{glueing} function and solve the BVP, see Sections \ref{subsec:collage} and \ref{subsec:resolution_BVP}.
\end{enumerate}
%Our strategy of proof is exactly the same as that used in \cite{franceschi_explicit_2017} to determine the stationary distribution and at many places technical details will be very similar to \cite{franceschi_explicit_2017}
%and \cite{baccelli_analysis_1987} especially when it concerns the kernel and some linked factors.
For some analytic steps, our strategy of proof is similar to the one used in \cite{franceschi_explicit_2017} to determine the stationary distribution. In some places, the technical details will be identical to \cite{franceschi_explicit_2017}
and \cite{baccelli_analysis_1987},
\rems{especially related to the kernel.}
%especially when it concerns the kernel and some linked factors. 
But, being in the transient case, the probabilistic study differs from \cite{franceschi_explicit_2017} and leads to a different functional equation and to a more complicated boundary value problem whose analytic resolution is more difficult.

 \subsubsection*{Boundary value problem}
 In Lemma \ref{lem:BVP} we establish a Carleman boundary value problem satisfied by the Laplace transform $\psi_1$. For some functions $G$ and $g$ defined in \eqref{eq:def:G} and \eqref{eq:def:g} and some hyperbola $\R$ defined in \eqref{eq:curve_definition1} which depend on the parameters $(\mu,\Sigma,R)$ we obtain the boundary condition \eqref{eq:boundary_condition_general1}:
 $$
     \psi_1(\overline{\theta_2})=G(\theta_2)\psi_1({\theta_2}) + g(\theta_2), \qquad \forall \theta_2\in \R.
 $$
\rems{This equation is particularly complicated: The function g makes the BVP}
%This is a particularly complicated problem to solve because it is 
doubly non-homogeneous due to the function $G$ but also to $g$ which comes from the term $e^{\theta \cdot z_0}$ in the functional equation \eqref{eq:eqfunctintro}. The function $g$ makes the BVP differ from the one obtained in the recurrent case for the Laplace transform of the stationary distribution \rems{\cite[(22)]{franceschi_explicit_2017}}.
%On the other hand this BVP reminds us of some BVP obtained in the discrete case for the generating functions of the Green's functions.

 \subsubsection*{Explicit expression}

The resolution of such a BVP is technical and uses the general theory of BVP. In order to make the paper self-contained, Appendix \ref{appendix:BVP} briefly presents this theory. The solutions can be expressed in terms of Cauchy integral and some conformal mapping $w$ defined in \eqref{eq:expression_CGF_BM1}.
Our main result is an integral formula for the Laplace transform $\psi_1$ precisely stated in Theorem~\ref{thm:main}.
Let us give now the shape of the solution.
%$$
%\psi_1 (\theta_2)=
%\frac{Y(\theta_2)}{2i\pi} \int_{\R^-}
%\frac{w'(t) g(t)}{Y^+(t) (w(t)-w(\theta_2))} \mathrm{d} t, \quad \forall\theta_2\in \G
%$$
%où $X$ est une solution du problème frontière homogène du type de celle de la formule
%See Theorem \ref{thm:main} for the precise statement.
We have 
$$
\psi_1(\theta_2)=
%w(z)^{\chi} 
%\exp  \left(
%\frac{1}{2i\pi} \int_{\frac{1}{2}\mathcal{L}} \log(G(t))\frac{ w'(t)}{w(t)-w(z)}- \frac{ w'(t)}{w(t)} \, \mathrm{d}t
%\right) 
%\times
%\\
%\left(
\frac{-Y (w(\theta_2))}{2i\pi} \int_{\R^-} \frac{g(t)}{Y^+ (w(t))}
\left(
 \frac{w'(t) }{w(t)-w(\theta_2)}
 + \chi
  \frac{ w'(t) }{w(t)} 
  \right)
  \, \mathrm{d}t
 %\right)
$$
%\end{multline}
where 
$$
Y (w(\theta_2))=w(\theta_2)^{\chi} 
\exp  \left(
\frac{1}{2i\pi} \int_{\R^-} \log(G(s))\left(\frac{ w'(s)}{w(s)-w(\theta_2)}- \frac{ w'(s)}{w(s)} \right) \, \mathrm{d}s
\right),
$$
$\chi=0$ or $1$ and $Y^+$ is the limit of $Y$ on $\R$. \rems{This formula is analogous but more complicated than the one obtained in \cite[(14)]{franceschi_explicit_2017}.}
%On consultera le théorème \ref{thm:main} pour une version plus détaillée du résultat. 
In the same way there is a similar formula for $\psi_2$, and then the functional equation \eqref{eq:eqfunctintro} gives an explicit formula for the Laplace transform $\psi$. \rems{Green's functions} are obtained by taking the inverse Laplace transforms.

%Remarks?

\subsection{Perspectives}

Developing the analytic approach, it would be certainly be possible to study further %deeper 
Green's functions and obliquely reflected Brownian motion in wedges. Here are some research topic perspectives:
\begin{itemize}
\item
Study the algebraic nature of \remst{the }Green's function: as in the discrete models it would require to introduce the group related to the process and analyze further the structure of the BVP in studying the existence of multiplicative and additive decoupling functions, see Section \ref{subsec:decoupling_functions} and \cite{malysev_analytic_1972,bousquet-melou_walks_2010,
bernardi_counting_2015,
BoElFrHaRa_algebraic_2018};
\item
Determine the asymptotics of \remst{the }Green's function, the Martin boundary and the corresponding harmonic functions: to do this we should study the singularities and \rems{invert} the Laplace transforms in order to use transfer lemmas and the saddle point method on the Riemann surface, see \cite{kurkova_martin_1998,
kourkova_random_2011, franceschi_asymptotic_2016,
raschel_random_2014,
martin_boundary_raschel_tarrago,ernst_franceschi_asymptotic};
\item Give an explicit expression for the transition function: to do that, we could try to find a functional equation satisfied by the resolvent of the process, which would contain one more variable, and seek to solve it.
\end{itemize}
We leave these questions for future works. Furthermore, even if there are some attempts, extending the analytic approach to higher dimensions %is still 
remains an open question.

\subsection{Structure of the paper}

\begin{itemize}
\item Section \ref{sec:transientSRBM} presents the process we are studying 
\rems{and focuses on the transience conditions.}
%and focuses on the conditions for the process to be transient. 
\item Section \ref{sec:functionalequation} establishes the new functional equation which is the starting point of our analytic study. The kernel is studied and the Laplace transform is continued on some domain.
\item Section \ref{sec:BVP} \rems{states} and solves the boundary value problem satisfied by the Laplace transform $\psi_1$. The main result, which is the explicit expression of $\psi_1$, is stated in Theorem~\ref{thm:main}.
\item Appendix \ref{appendix:potentialtheory} presents in a brief way the potential theory which links Green's functions and the partial differential equations.
\item Appendix \ref{appendix:BVP} presents the general theory of boundary value problems which is used in Section \ref{sec:BVP}.
\item Appendix \ref{appendix:dim1} studies \remst{the }Green's functions of reflected Brownian motion in dimension one.
\item \rems{Appendix \ref{appendix:generalization} explain how to generalize the results to the case of a non-positive drift.}
\end{itemize}

\subsection*{Acknowledgment}

%I 
%The author 
I would like to express my gratitude to
%I would like to thank 
Irina Kourkova and Kilian Raschel for introducing me to this subject and this theory.
This research was partially supported by the ERC starting grant - 2018/2022 - COMBINEPIC - 759702.
%I acknowledge support from ???.
%To conclude the author thank the anonymous referees and the editors for their usefull remarks.

 \section{Transient SRBM in the quadrant}
\label{sec:transientSRBM}

\subsection{Definition}
\label{intro:subsec:defexis}

%ALLER voir Williams pour la définition

%Let
%\begin{equation}
%\label{intro:eq:defmuRsigma}
%\left\{
%\begin{tabular}{l}
%$\Sigma = \left(  \begin{array}{cc} \sigma_{11} & \sigma_{12} \\ \sigma_{12} & \sigma_{22} \end{array} \right) \in \mathbb{R}^{2 \times 2}$ a covariance matrix positive definite,
%\\
%$\mu= \left(  \begin{array}{c} \mu_1 \\  \mu_2  \end{array} \right) \in \mathbb{R}^2$ a drift,
%\\
%$R=(R^1,R^2)= \left(  \begin{array}{cc} r_{11} & r_{12} \\ r_{21} & r_{22} \end{array} \right) \in \mathbb{R}^{2 \times 2}$ 
%a \og reflection matrix \fg.   %\left(  \begin{array}{c} r_{12} \\  r_{22}  \end{array} \right)$.
%\end{tabular}
%\right.
%\end{equation}

Let
\begin{equation*}
\label{intro:eq:defmuRsigma}
\Sigma = \left(  \begin{array}{cc} \sigma_{11} & \sigma_{12} \\ \sigma_{12} & \sigma_{22} \end{array} \right) \in \mathbb{R}^{2 \times 2},
\quad
\mu= \left(  \begin{array}{c} \mu_1 \\  \mu_2  \end{array} \right) \in \mathbb{R}^2,
\quad
R=(R_1,R_2)= \left(  \begin{array}{cc} 1 & r_{12} \\ r_{21} & 1 \end{array} \right) \in \mathbb{R}^{2 \times 2}
\end{equation*}
respectively be a positive-definite covariance matrix, a drift and a \textit{reflection matrix}.
The matrix $R$ has two reflection vectors giving the reflection direction %along the axes, 
$R_1
%= \left(  \begin{array}{c} r_{11} \\  r_{21}  \end{array} \right)
$ 
along the $y$-axis and 
$R_2
%= \left(  \begin{array}{c} r_{12} \\  r_{22}  \end{array} \right)
$ along the $x$-axis, see Figure \ref{fig:rebondderive}.
We will define the obliquely reflected Brownian motion in the quadrant in the case where the process is a semi-martingale, see \citet{Williams-85}. Such a process is also called \textit{semimartingale reflected Brownian motion} (SRBM).
\begin{prop}[Existence and uniqueness]
\label{def:MBsemimartingale}
Let us define ${\rems{Z=(Z(t),t\geqslant 0)}}$ a SRBM with drift in the quarter plane $\mathbb{R}_+^2$ associated to $(\Sigma, \mu, R)$ as the semi-martingale such that for $t\in \mathbb{R}_+$ we have
$$ {\rems{Z(t)}}=x + W\rems{(t)} + \mu t + R L\rems{(t)} \ \in \mathbb{R}_+^2 ,$$ 
where
$x$ is the starting point, $W$ is a planar Brownian motion starting from $0$ and of covariance $\Sigma$ and for $i=1,2$ the coordinate ${\rems{L_i(t)}}$ \rems{of $L\rems{(t)}$} is a continuous non-decreasing process which increases only when $\rems{Z_i}=0$, that is when the process \rems{reaches the face $i$ of the} boundary ($\int_{\{t : {\rems{Z_i(t)}} > 0 \}} \mathrm{d} {\rems{L_i(t)}}=0$).
%\begin{itemize}
%\item $x$ is the starting point,
%\item $(W_t)_{t\in\mathbb{R}_+}$ is a planar Brownian motion starting from $0$ and of covariance $\Sigma$,
%\item ${\rems{L_i(t)}}$ is an continuous non-decreasing process which increase only when $Z_i=0$ that is when the process reach the boundary ($\int_{\{t : {\rems{Z(t)}}^i > 0 \}} \mathrm{d} {\rems{L_i(t)}}=0$ for $i=1,2$).
%\end{itemize}
The process ${\rems{Z}}$
exists \rems{in a weak sense} if and only if \rems{one of the three conditions holds}
\begin{equation}
\label{eq:existence}
 \rems{r_{12}>0,\quad r_{21}>0, \quad r_{12}r_{21}<1.}
% (r_{11}>0,\quad r_{22}>0,\quad \det R >0) \quad \text{or} \quad (r_{11}>0,\quad r_{22}>0,\quad  r_{12}>0,\quad r_{21}>0).
\end{equation}
In this case the process is unique in law and defines a Feller continuous strong Markov process.
\end{prop}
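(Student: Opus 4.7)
The plan is to reduce Proposition~\ref{def:MBsemimartingale} to the classical existence/uniqueness theory for SRBM in an orthant by identifying condition~\eqref{eq:existence} with the completely-$\mathcal{S}$ property of the reflection matrix~$R$. By the results of Reiman--Williams and Taylor--Williams (building on Williams~\cite{Williams-85}), a SRBM with data $(\Sigma,\mu,R)$ in $\mathbb{R}_+^d$ exists in the weak sense, is unique in law, and defines a Feller continuous strong Markov process if and only if $R$ is completely-$\mathcal{S}$ (i.e.\ every principal submatrix is an $\mathcal{S}$-matrix, meaning there exists $x\geqslant 0$, $x\neq 0$, with that submatrix times $x$ strictly positive). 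So the substantive task is to verify that, for our specific $R$ with unit diagonal, the disjunction $r_{12}>0$ or $r_{21}>0$ or $r_{12}r_{21}<1$ is precisely the completely-$\mathcal{S}$ condition.

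For this equivalence, I would first dispose of the $1\times 1$ principal submatrices: both equal $(1)$, which is trivially $\mathcal{S}$. So completely-$\mathcal{S}$ reduces to $R$ itself being an $\mathcal{S}$-matrix, i.e.\ to the existence of $x_1,x_2\geqslant 0$, not both zero, with
\[
x_1+r_{12}x_2>0\quad\text{and}\quad r_{21}x_1+x_2>0.
\]
Then I would split into cases on the signs of $r_{12},r_{21}$:
\textbf{(a)} If $r_{12}>0$, take $x=(1,\varepsilon)$ with $\varepsilon>0$ large enough that $r_{21}+\varepsilon>0$; both inequalities hold.
\textbf{(b)} Symmetric argument if $r_{21}>0$.
\textbf{(c)} If $r_{12}\leqslant 0$ and $r_{21}\leqslant 0$, any candidate $x$ must satisfy $x_1,x_2>0$, and setting $t=x_1/x_2>0$ the two inequalities become $t>-r_{12}$ and $1/t>-r_{21}$, which are simultaneously solvable iff $(-r_{12})(-r_{21})<1$, i.e.\ $r_{12}r_{21}<1$.
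Conversely, if all three conditions of~\eqref{eq:existence} fail ($r_{12}\leqslant 0$, $r_{21}\leqslant 0$, $r_{12}r_{21}\geqslant 1$), the same computation shows no admissible $x$ exists, so $R$ is not completely-$\mathcal{S}$.

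With the equivalence in hand, existence and uniqueness in law follow from the Taylor--Williams construction of SRBM in an orthant under the completely-$\mathcal{S}$ hypothesis, and the Feller continuous strong Markov property is part of the conclusion of that construction (see also Williams~\cite{Williams-85} in the two-dimensional setting). The non-existence part when~\eqref{eq:existence} fails follows from the Reiman--Williams necessity result.

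I do not expect any serious obstacle here: the content of the proposition is really a dictionary translation between a concrete two-dimensional algebraic condition on $R$ and a known sufficient-and-necessary condition from the general SRBM literature. The only mildly delicate point is the case analysis in~(c), which requires being careful with the strict versus non-strict inequalities so as not to confuse $\mathcal{S}$ with its closure; I would therefore write out that case most explicitly and relegate cases~(a)--(b) to a single line.
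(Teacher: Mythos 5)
Your proposal matches the paper's approach: the paper gives no independent proof, but simply invokes the general orthant existence/uniqueness theorem from Williams's survey (Theorem 2.3 there), whose hypothesis is that $R$ be completely-$\mathcal{S}$, and your explicit case analysis correctly verifies that for a $2\times 2$ reflection matrix with unit diagonal this is equivalent to the disjunction $r_{12}>0$ or $r_{21}>0$ or $r_{12}r_{21}<1$ in \eqref{eq:existence}. The only added value over the paper is that you spell out this dictionary translation (including the delicate strict-inequality bookkeeping in your case (c)), which the paper leaves implicit.
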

The process $L\rems{(t)}$ represents the local time on the boundaries, more specifically its first coordinate $\rems{L_1(t)}$ is the local time on the \rems{vertical axis} and the second coordinate $\rems{L_2(t)}$ the local time on the \rems{horizontal axis}.
%Up to some constants the process ${\rems{L(t)}}$ represent the local time in the boundary, more specifically $r_{11}{\rems{L_1(t)}}$ is the local time on the $y$-axis and $r_{22}{\rems{L_2(t)}}$ the local time on the $x$-axis.
%\noindent
%Coordonnée par coordonnée on a les notations suivantes,
%$$\left\{
%\begin{array}{l}
%  Z^1_t = x + W_t^1 + \mu_1 t + r_{11}  {\rems{L_1(t)}} + r_{12} {\rems{L_2(t)}}, \\
%  Z^2_t = y + W_t^2 + \mu_2 t + r_{21} {\rems{L_1(t)}} + r_{22} {\rems{L_2(t)}}.
%\end{array}
%\right.$$
%\begin{prop}[Existence and unicity]
%The process ${\rems{Z(t)}}$
%exists if and only if
%\begin{equation}
%\label{eq:existence}
%  \det R >0 \quad \text{or} \quad (r_{12}>0,\quad r_{21}>0).
%% (r_{11}>0,\quad r_{22}>0,\quad \det R >0) \quad \text{or} \quad (r_{11}>0,\quad r_{22}>0,\quad  r_{12}>0,\quad r_{21}>0).
%\end{equation}
%In this case the process is unique in law and defines a Feller continuous strong Markov process.
%\end{prop}
The proof of existence and uniqueness can be found in the survey of \citet[Theorem 2.3]{williams_semimartingale_1995} for orthants, \rems{in general dimension $d\geqslant 2$.}
%it just remains to convert the conditions in dimension two. %These conditions mean that in order the process to exist the reflection vectors must not be too much inclined toward $0$ if we want the process don't be trapped in the corner,
These conditions mean that the reflection vectors must not be too much inclined toward $0$ for the process to exist. Otherwise the process will be trapped in the corner, see Figure \ref{fig:conditionexistence}. \rems{The limit condition $r_{12}r_{21}=1$ is satisfied when the two reflection vectors are collinear and of opposite directions.}

\begin{figure}[htb]
    \centering
    \subfigure[Process doesn't exist]{\label{sub1}  \includegraphics[scale=0.55]{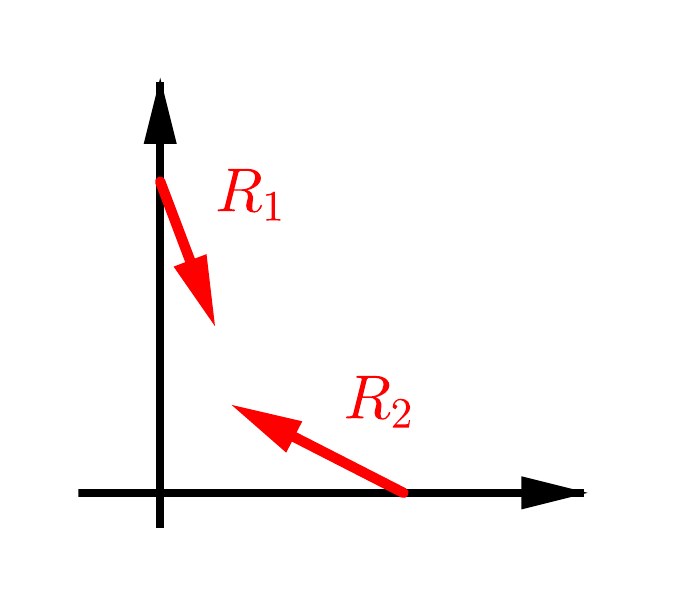}}
    \subfigure[Process exists]{\label{sub2} \includegraphics[scale=0.55]{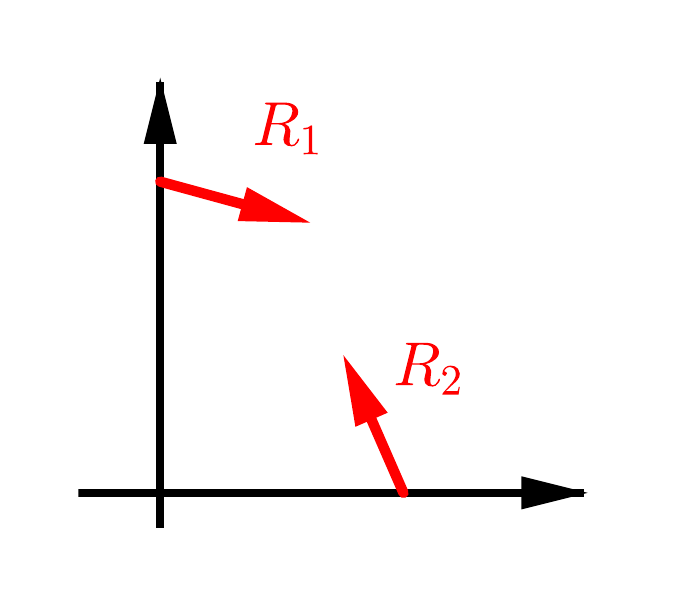}}
    \caption{Existence conditions}
    \label{fig:conditionexistence}
\end{figure}

\subsection{Recurrence and transience}
\label{subsec:recurrence}

Markov processes have approximately two possible behaviours as explained in the book of \citet[p. 424]{revuzyor}. Either
they converge to infinity which is
the transient case, or they come back at arbitrarily large times to some
small sets which is the recurrent case. We present very briefly some results of the corresponding theory, for more details (in particular on topological issues) one can read the articles of
\citet{Azema1966,Azema1967}.
%Revuz loi du tout ou rien?

Let ${\rems{X(t)}}$ be a Feller continuous strong Markov process \rems{on} state space $E$, a locally compact set with countable base.
We say that the point $x$ leads to $y$ if for all neighbourhood $V$ of $y$ we have $\mathbb{P}_x (
\tau_V
%\text{inf} \{ t>0 :{\rems{X(t)}} \in V\}
<\infty)>0$
where $\tau_V=\inf \{t > 0 : {\rems{X(t)}} \in V \}$. The points $x$ and $y$ communicate if $x$ leads to $y$ and $y$ leads to $x$, it is an equivalence relation. For $x\in E$ we say that
\begin{itemize}
\item $x$ is \textit{recurrent} if 
$\mathbb{P}_x \left( \overline{\underset{t\to\infty}{\lim}}%_{t\to\infty} 
\mathbf{1}_U ({\rems{X(t)}})=1 \right)
=1$ for all $U$ neighbourhoods of $x$,
\item $x$ is \textit{transient} if
$\mathbb{P}_x \left( \overline{\underset{t\to\infty}{\lim}}%_{t\to\infty} 
\mathbf{1}_U ({\rems{X(t)}})=1 \right)
=0$ for all $U$ \rems{relatively compact neighborhoods} of~$x$.
\end{itemize}
\rems{Each point is either recurrent or transient, and if two states communicate, they are
either both recurrent or both transient, see \cite[Theorem III 1.]{Azema1966}. The process is called \textit{recurrent}
or \textit{transient} if each point is recurrent or transient, respectively.}
%Each point is either recurrent or transient, see \citet[Theorem III 1.]{Azema1966}, and if two states communicate they have the same nature recurrent or transient. The process will be called recurrent/transient if each point is recurrent/transient. 
The next proposition may be found in \cite[Prop III 1.]{Azema1966}.
\begin{prop}[Transience properties] 
%Let ${\rems{X(t)}}$ be a Feller continuous strong Markov process of state space $E$ a locally compact set with countable base, 
The following properties are equivalent
\begin{enumerate}
\item every point is transient;
\item ${\rems{X(t)}}$ \rems{tends} to infinity when $t\to\infty$ a.s.;
\item for all compact $K$ of $E$ and for all starting point $x$ \remst{the }Green's measure of $K$ is finite:
$$
G(x,K) = \mathbb{E}_{x} \left[ \int_{0}^{\infty} \mathbf{1}_K ({\rems{X(t)}}) \, \mathrm{d} t \right] < \infty .
$$
\end{enumerate}
\end{prop}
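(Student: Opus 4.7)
The plan is to establish the cycle $(3) \Rightarrow (2) \Rightarrow (1) \Rightarrow (3)$. Of the three implications, $(2) \Rightarrow (1)$ is essentially immediate: if $X(t) \to \infty$ almost surely, then for any relatively compact neighborhood $U$ of $x$ one has $X(t) \notin U$ for all $t$ large enough a.s., so $\mathbf{1}_U(X(t)) \to 0$ and a fortiori $\limsup_{t\to\infty}\mathbf{1}_U(X(t)) = 0$ a.s., which is $(1)$.

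For $(3) \Rightarrow (2)$, I would argue by contradiction. Exhaust $E$ by a sequence of relatively compact open sets $(K_n)$ with $\overline{K_n} \subset K_{n+1}$. If $X(t)$ does not tend to infinity $\mathbb{P}_x$-a.s., then there is some $n$ for which $X$ visits $K_n$ at arbitrarily large times with positive probability. Introduce the successive return times $\tau_0 := \inf\{t : X(t) \in K_n\}$, $\sigma_k := \inf\{t > \tau_{k-1} : X(t) \notin K_{n+1}\}$, $\tau_k := \inf\{t > \sigma_k : X(t) \in K_n\}$. Path-continuity of $X$, the Feller property, and the compactness of $\overline{K_n}$ inside the open set $K_{n+1}$ yield a uniform lower bound $c > 0$ on $\mathbb{E}_y[\sigma_1]$ for $y \in \overline{K_n}$. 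The strong Markov property at each $\tau_k$ then gives
$$G(x, \overline{K_{n+1}}) \;\geq\; \sum_{k \geq 0} c\, \mathbb{P}_x(\tau_k < \infty) \;=\; +\infty,$$
contradicting $(3)$.

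For $(1) \Rightarrow (3)$, I would fix a compact $K$ and cover it by finitely many relatively compact open neighborhoods $U_1, \ldots, U_m$, each contained in a slightly larger relatively compact open set $V_i \supset \overline{U_i}$. It then suffices to bound $\mathbb{E}_x[\int_0^\infty \mathbf{1}_{U_i}(X(t))\,\mathrm{d}t]$ for each $i$. Fix a generic pair $(U, V)$. Transience of every point of $\overline{U}$, combined with the Feller property and the compactness of $\overline{U}$, should yield constants $T > 0$, $c < \infty$ and $\rho \in (0,1)$ such that, uniformly in the starting point $y \in \overline{U}$, the probability of returning to $\overline{U}$ after time $T$ is at most $\rho$ and the expected time spent in $U$ during a single excursion up to the next exit of $V$ is at most $c$. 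Applying the strong Markov property at the successive returns to $\overline{U}$ then produces a geometric bound $\mathbb{E}_x[\int_0^\infty \mathbf{1}_U(X(t))\,\mathrm{d}t] \leq c/(1-\rho) < \infty$, which gives $(3)$.

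The main obstacle is the last implication $(1) \Rightarrow (3)$, precisely because upgrading the pointwise almost-sure condition $\limsup_t \mathbf{1}_U(X(t)) = 0$ to an integrable occupation-time bound demands uniformity across all starting points in the compact. It is the combination of the Feller property (to pass from pointwise to local control of the return probabilities) with the topological hypotheses on $E$ (locally compact, countable base, so that compact sets are covered by finitely many small neighborhoods) that makes the passage work, and any weakening of either ingredient would break the argument.
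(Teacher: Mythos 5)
The paper offers no proof of this proposition: it is quoted from Az\'ema, Kaplan--Duflo and Revuz \cite[Prop.\ III.1]{Azema1966}, as the sentence immediately preceding the statement indicates, so there is nothing in the paper to compare your argument against. Your architecture --- the cycle $(3)\Rightarrow(2)\Rightarrow(1)\Rightarrow(3)$, with $(2)\Rightarrow(1)$ immediate, $(3)\Rightarrow(2)$ by an excursion decomposition forcing infinite occupation of a compact, and $(1)\Rightarrow(3)$ by a geometric-trials bound --- is the standard one and is sound in outline; the issues below concern whether the steps are actually carried out.

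Two steps of $(1)\Rightarrow(3)$ are genuine gaps. First, the claim that ``the expected time spent in $U$ during a single excursion up to the next exit of $V$ is at most $c$'' is unjustified and essentially circular: a uniform bound on $\mathbb{E}_y\bigl[\int_0^{\tau_{V^c}}\mathbf{1}_U(X(t))\,\mathrm{d}t\bigr]$, or even on $\mathbb{E}_y[\tau_{V^c}]$, is a local version of precisely the finiteness you are trying to establish, and transience does not supply it directly. The set $V$ is not needed: with $\tau_{k+1}=\inf\{t\geq\tau_k+T:\ X(t)\in\overline{U}\}$ the occupation time of $U$ is at most $T\sum_k\mathbf{1}_{\{\tau_k<\infty\}}$, and the strong Markov property gives $\mathbb{P}_x(\tau_k<\infty)\leq\rho^k$, hence the bound $T/(1-\rho)$. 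Second, and more seriously, the uniform estimate $\sup_{y\in\overline{U}}\mathbb{P}_y(\exists t\geq T:\ X(t)\in\overline{U})\leq\rho<1$ is only asserted (``should yield''). Pointwise these probabilities decrease to $0$ by transience, but upgrading this to a uniform bound on the compact $\overline{U}$ requires an upper-semicontinuity property of the return probabilities in the starting point (so that a Dini-type argument applies), and the Feller property only gives continuity of $y\mapsto P_Tf(y)$ for continuous $f$, not of hitting probabilities of $\overline{U}$; this is exactly the point where the classical treatments resort to potential-theoretic tools (excessive functions) rather than bare continuity. You correctly identify this as the main obstacle, but identifying an obstacle is not the same as overcoming it. A milder instance of the same uniformity issue appears in your lower bound $\mathbb{E}_y[\sigma_1]\geq c$ in $(3)\Rightarrow(2)$, though there it concerns small-time behaviour and is standard for continuous Feller processes. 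As it stands, the proposal is a correct plan whose hardest analytic step is left unproved.
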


The main articles which study the recurrence and the transience of SRBM in wedges are \cite{williams_recurrence_1985} with zero drift,
\cite{hobson_recurrence_1993} with non-zero drift and the survey \cite{williams_semimartingale_1995}.
The process has only one equivalence class equal to the whole quadrant, see for example \cite[(4.1)]{williams_recurrence_1985}.
The process will be recurrent if for each set $V$ (of positive Lebesgue measure) and all starting point $x$, $\mathbb{P}_x ( \tau_V < \infty)=1$, otherwise it will be transient. It will be called \rems{\textit{positive recurrent}} and will admit a stationary distribution if $\mathbb{E}_x [\tau_V] < \infty$ and \rems{\textit{null recurrent}} if $\mathbb{E}_x [\tau_V] = \infty$ for all $x$ and $V$.
\begin{prop}[Transience and recurrence]
Assume that the existence condition \eqref{eq:existence} is satisfied and note $\mu_1^-$ and $\mu_2^-$ the negative parts of the drift components. The process ${\rems{Z}}$ is 
transient if and only if
\begin{equation} 
\mu_1 + r_{12}  \mu_2^- > 0
\text{ or }
\mu_2 + r_{21}  \mu_1^- > 0 ,
\label{eq:transient_condition}
\end{equation}
and recurrent if and only if
\begin{equation} 
\mu_1 + r_{12}  \mu_2^- \leqslant 0
\text{ and }
\mu_2 + r_{21}  \mu_1^- \leqslant 0 .
\end{equation}
In the latter case the process is \rems{positive
recurrent} and admit a unique stationary distribution if and only if
$
\mu_1 + r_{12}  \mu_2^- < 0
\text{ and }
\mu_2 + r_{21}  \mu_1^- < 0 ,
$
and is \rems{null recurrent} if and only if
$
\mu_1 + r_{12}  \mu_2^- = 0
\text{ or }
\mu_2 + r_{21}  \mu_1^- = 0 . 
$
\end{prop}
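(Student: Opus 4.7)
The plan is to reduce the statement to the Hobson--Rogers criterion \cite{hobson_recurrence_1993}, which was proved precisely for SRBM with drift in a two-dimensional wedge, and to justify the explicit threshold by a case analysis based on the signs of $\mu_1$ and $\mu_2$. The quantities $\mu_i+r_{ij}\mu_j^-$ admit a natural probabilistic interpretation: they measure the effective drift along the $i$th coordinate obtained after the process has been pushed in direction $R_j$ by the reflection at the face $\{z_j=0\}$, which is the relevant face when $\mu_j<0$.

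First I would dispose of the trivial regimes. If $\mu_1>0$ and $\mu_2>0$ one has $\mu_i^-=0$ so both conditions in \eqref{eq:transient_condition} are strict inequalities, and transience is immediate from the law of large numbers applied to the semimartingale decomposition $Z(t)=x+W(t)+\mu t+RL(t)$, since $RL(t)$ has nonnegative components under \eqref{eq:existence} and $\mu t$ already tends to infinity. Symmetrically, if $\mu_1<0$ and $\mu_2<0$ and both $\mu_i+r_{ij}\mu_j^-\leqslant 0$, the process is pushed back into every bounded neighbourhood of the origin by a drift that dominates the reflection, and one gets recurrence by the standard Foster--Lyapunov argument with the linear Lyapunov function $V(z)=\alpha_1 z_1+\alpha_2 z_2$, where $(\alpha_1,\alpha_2)$ is chosen so that $\mu\cdot\alpha<0$ and $R_i\cdot\alpha\leqslant 0$ for $i=1,2$. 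The existence of such $\alpha$ is precisely equivalent to the two recurrence inequalities, by a short Farkas-type argument applied to the cone spanned by $\mu$ and the columns of $R$.

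For the mixed-sign regimes, say $\mu_1>0$ and $\mu_2<0$, the process typically spends long intervals of time close to the face $\{z_2=0\}$ and is reflected there in direction $R_2=(r_{12},1)^\top$. The one-dimensional projection onto the $z_1$-axis behaves like a Brownian motion with effective drift $\mu_1+r_{12}|\mu_2|$ (up to local-time contributions from the other face, which are negligible as long as $Z_1$ stays large). This observation, made rigorous via the Skorokhod decomposition of the boundary process together with the strong law of large numbers for $L_2(t)/t$, yields the equivalence between $\mu_1+r_{12}\mu_2^->0$ and transience in this regime. To promote the formal argument to a full proof I would again use a linear Lyapunov function $V(z)=\alpha\cdot z$, now with $\alpha$ having possibly one negative coordinate, and verify the drift and oblique Neumann inequalities $\mathcal L V<0$ and $\partial_{R_i}V\leqslant 0$; the ranges of $\alpha$ for which both hold are nonempty exactly under the stated conditions. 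The split between positive and null recurrence then follows from whether these inequalities are strict or not, which controls the integrability of return times via a second-moment Lyapunov argument as in \cite{dupuis_lyapunov_1994}.

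The main technical obstacle is to check carefully the boundary behaviour in the mixed-sign case: the process may oscillate between the two faces in a nontrivial way and the contribution of one local time to the drift along the other axis must be quantified. This is exactly what Hobson and Rogers do in \cite{hobson_recurrence_1993}, so in the paper I would state the proposition as a direct consequence of \cite[Theorems~2.1 and 2.2]{hobson_recurrence_1993} (combined with the positive/null dichotomy of \cite{williams_semimartingale_1995}), and include only the Lyapunov-function heuristic above as motivation.
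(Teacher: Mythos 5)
Your proposal is correct and ends up exactly where the paper does: the paper gives no proof of its own and simply refers the statement to \cite{hobson_recurrence_1993,williams_semimartingale_1995,williams_recurrence_1985}, which is precisely the reduction you propose (your Lyapunov-function discussion is consistent motivation but is not needed once the citation is invoked). No gap to report.
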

This result may be found in \cite{williams_semimartingale_1995,hobson_recurrence_1993,
williams_recurrence_1985}. In order to restrict the number of cases to handle, %and to avoid some technical issues
we will now assume that the drift has positive coordinates, that is
\begin{equation}
\label{eq:drift_positive_condition}
\mu_1 >0
\text{ and }
\mu_2 >0.
\end{equation}
In this case the process is then obviously transient and converges to infinity. In the other transient cases the process \rems{tends} to infinity but\remst{ only} along one \rems{of the} axis. \rems{See for example \cite{fomichov2020probability} which computes the probability of escaping along each axis when $\mu_1<0$ and $\mu_2<0$.} These cases could be treated \rems{in} the same way with additional technical issues.
\rems{See Appendix~\ref{appendix:generalization} which details the main differences of the study and generalize the results to the case of a non-positive drift.}
 %and it would be interesting to do it in order to see the differences.
%We conjecture that our main formula \ref{} is still valid as it is the case in \cite{franceschi_explicit_2017}.
Assumption~\eqref{eq:drift_positive_condition} is the counterpart to the rather standard hypothesis made in the recurrent case (as in \cite{dieker_reflected_2009,
foddy_analysis_1984,
Foschini,franceschi_asymptotic_2016,
franceschi_explicit_2017}) which takes % assumes that 
$\mu_1 <0$
and $\mu_2 <0$.

%On pourra lire \citet{hobson_recurrence_1993,
%harrison_brownian_1987,
%harrison_reflected_2009}, 
%%\citet{williams_recurrence_1985}
%pour plus de précisions et des démonstrations sur l'existence et l'unicité de la mesure invariante.
%Pour l'existence d'une densité par rapport à la mesure de Lebesgue on pourra consulter 
%\citet{dai_steady-state_1990,dai_reflected_1992,
%harrison_brownian_1987}. 
%L'article \citet{dai_stationary_2013} donne des interprétations géométriques des deux conditions, utiles pour les études asymptotiques de la mesure invariante ou des fonctions de Green.

%\begin{figure}[h]
%\begin{tabular}{m{2cm}|m{12cm}}
%Recurrent \hspace{0.8cm} cases & 
%\includegraphics[scale=0.45]{dessinCD/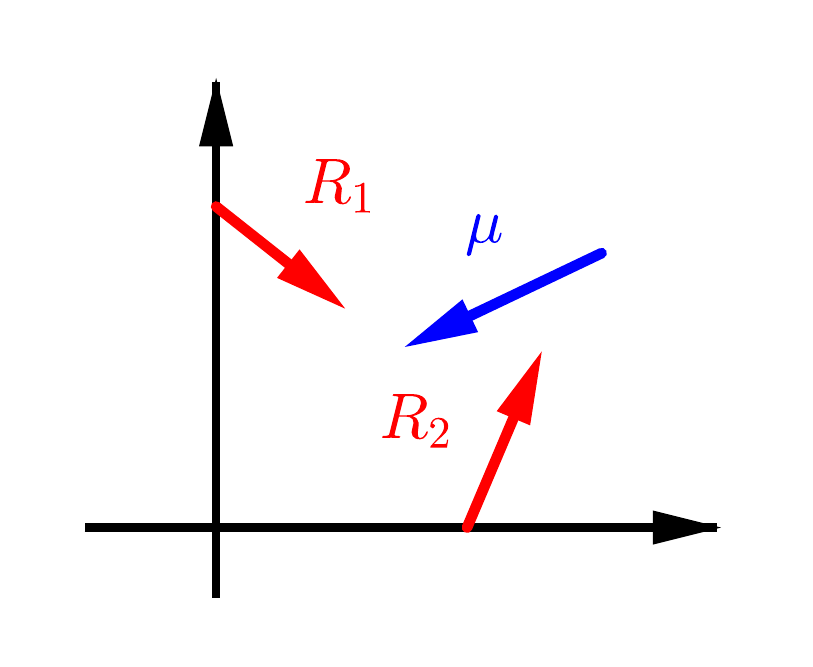}
%\includegraphics[scale=0.45]{dessinCD/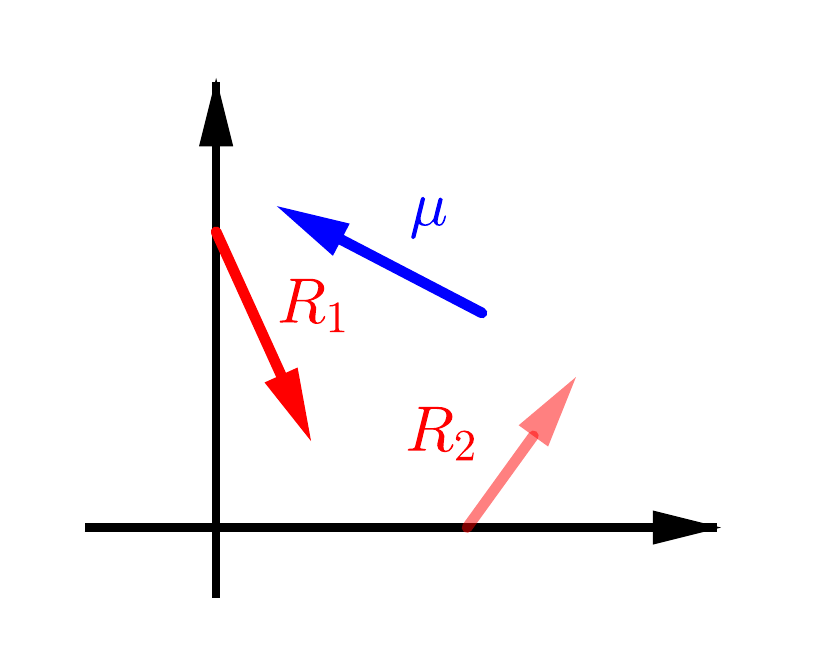}
%\includegraphics[scale=0.45]{dessinCD/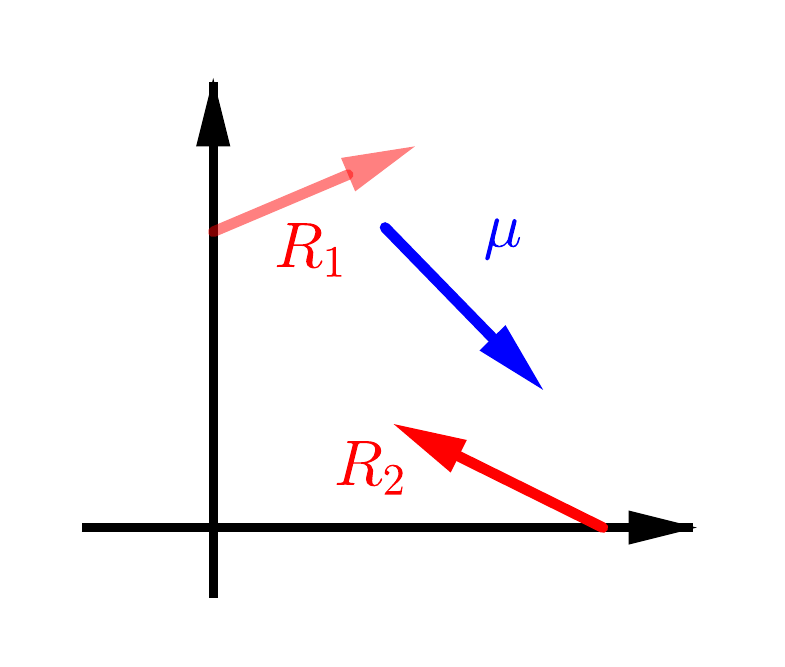}
%\\ 
%\hline 
%Transient \hspace{0.6cm} cases & 
%\includegraphics[scale=0.45]{dessinCD/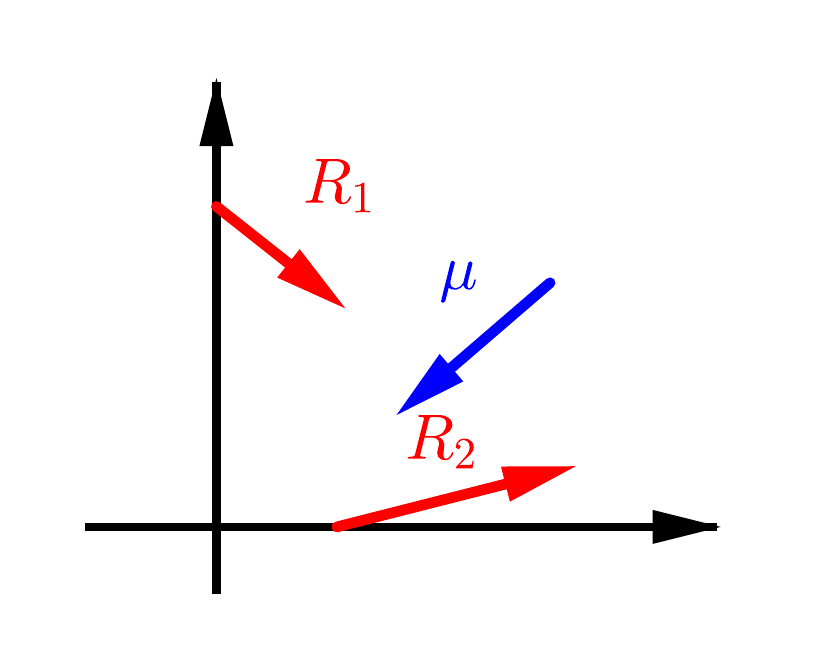} 
%\includegraphics[scale=0.45]{dessinCD/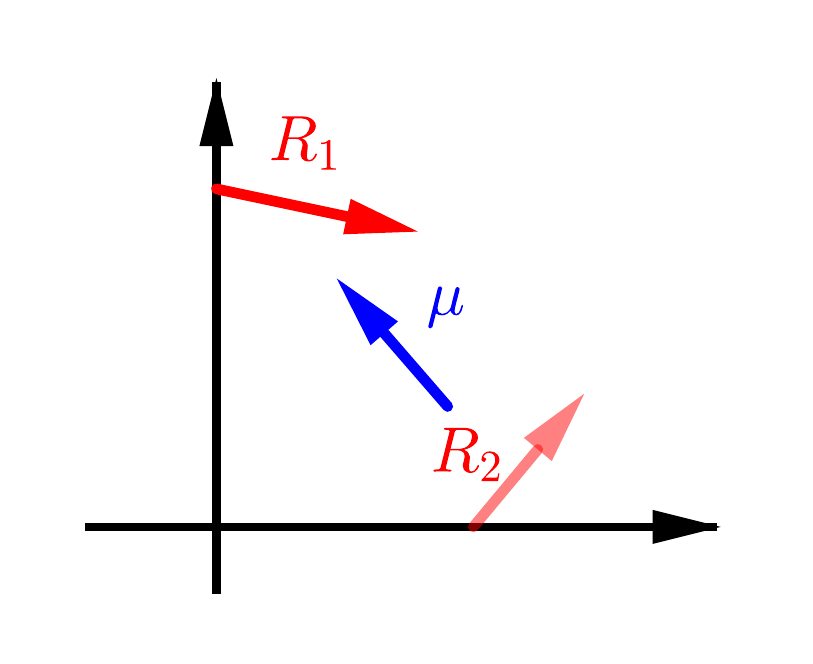} 
%\includegraphics[scale=0.45]{dessinCD/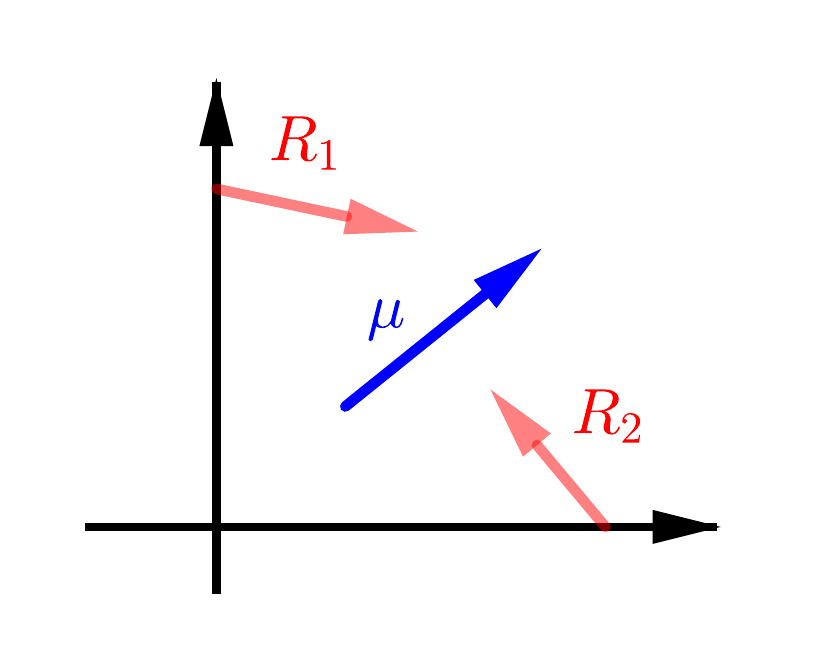}
%\end{tabular} 
%\caption{Recurrence and transience conditions according to the parameters}
%\end{figure}

\begin{figure}[htb]


    \centering
    \subfigure[Recurrent cases]{
\includegraphics[scale=0.5]{rebondrec1.pdf}
\includegraphics[scale=0.5]{rebondrec2.pdf}
\includegraphics[scale=0.5]{rebondrec3.pdf}    
    }
    \subfigure[Transient cases]{\includegraphics[scale=0.5]{rebondtrans1.pdf} 
\includegraphics[scale=0.5]{rebondtrans2.pdf} 
\includegraphics[scale=0.5]{rebondtrans3.pdf}}
    \caption{Recurrence and transience conditions according to the parameters}
    \label{fig:recurrenttransient}
\end{figure}

\section{A new functional equation}
\label{sec:functionalequation}

%\subsection{Generating functions}
%\label{intro:subsec:fonctionsgeneteqfonct}
%
%After inftroducing Green functions on the boundaries, 
%let us 
%define Laplace transforms of the Green functions that can be considered as generating functions.
%
%\begin{deff}[Laplace transforms of Green functions]
%\label{intro:def:laplacegreen}
%We define $\psi^{z_0} (\theta)$ the Laplace transform of the Green measure $G^{z_0}$ by
%$$
%\psi^{z_0} (\theta)
%:= 
%\iint_{\mathbb{R}^{2 }}  e^{\theta  \cdot z}  g_{z}^{z_0} \mathrm{d} z 
%=\mathbb{E}_{z_0} \left[ \int_0^{\infty}  e^{\theta \cdot  {\rems{Z(t)}}} \mathrm{d} t \right] .
%$$
%In the same way we define $\psi_1^{z_0}$ and $\psi_2^{z_0}$ the Laplace transforms of the Green functions on the boundaries by
%$$
%\psi_2^{z_0} (\theta_1):= \int_{\mathbb{R}} e^{\theta_1 z_1} h_2^{z_0} (\mathrm{d}z)
% = \mathbb{E}_{z_0} \left[ \int_0^{\infty}  e^{ \theta_1 {\rems{Z(t)}}^1 }   \mathrm{d} {\rems{L_2(t)}}  \right],
% $$
%and
% $$
%%  \text{ et } 
%\psi_1^{z_0} (\theta_2) :=
%\int_{\mathbb{R}} e^{\theta_2 z_2} h_1^{z_0}({\mathrm{d}z})
%= \mathbb{E}_{z_0} \left[ \int_0^{\infty}  e^{ \theta_2 {\rems{Z(t)}}^2 }   \mathrm{d} {\rems{L_1(t)}}  \right].
%$$
%\end{deff}
 
\subsection{Functional equation} 
\label{subsec:functional_equation}
 
We determine a kernel functional equation which is the starting point of our analytic study.
This key formula connects the Laplace transforms of \remst{the }Green's function inside and on the boundaries of the quarter plane.
Let us define \rems{the \textit{kernel}} $\gamma$, $\gamma_1$ and $\gamma_2$ the two %degree 
variables polynomials such that for $\theta=(\theta_1,\theta_2)$ we have
\begin{align}
\label{intro:def:gamma}
  \begin{cases}
     \gamma (\theta)=\frac{1}{2}  \theta \cdot \Sigma \theta +  \theta \cdot \mu  = \frac{1}{2}(\sigma_{11}\theta_1^2 + 2\sigma_{12}\theta_1\theta_2 + \sigma_{22} \theta_2^2) + \mu_1\theta_1+\mu_2\theta_2, \\
     \gamma_1 (\theta)=  R^1 \cdot \theta =\theta_1 + r_{21} \theta_2,  \\
     \gamma_2 (\theta)= R^2 \cdot\theta =r_{12} \theta_1 + \theta_2,
  \end{cases}
\end{align}
where $\cdot$ is the scalar product.
The equations $\gamma=0$, $\gamma_1=0$ and $\gamma_2=0$ respectively define in $\mathbb{R}^2$ an ellipse and two straight  lines. 
Let $\theta^*$ (resp. $\theta^{**}$) be the point in $\mathbb{R}^2 \setminus (0,0)$ such that $\gamma(\theta^*)=0$ and $\gamma_1(\theta^*)=0$ (resp. $\gamma_2(\theta^{**})=0$). The point $\theta^*$ (resp. $\theta^{**}$) is the intersection point between the ellipse $\gamma=0$ and the straight line $\gamma_1=0$ (resp. $\gamma_2=0$), see Figure \ref{fig:ellipse}. 
\begin{rem}
\label{rem:driftnormal}
Notice that the drift $\mu$ is an \rems{outer normal} vector to the ellipse in $(0,0)$. %Let us notice that 
Then the ellipse 
$\{\theta\in \mathbb{R}^2 : \gamma(\theta)=0 \}
\subset \{\theta\in \mathbb{C}^2 : 
\Re\theta \cdot \mu <0   \}
$.
\end{rem}
\begin{figure}[hbtp]
\centering
\includegraphics[scale=0.6]{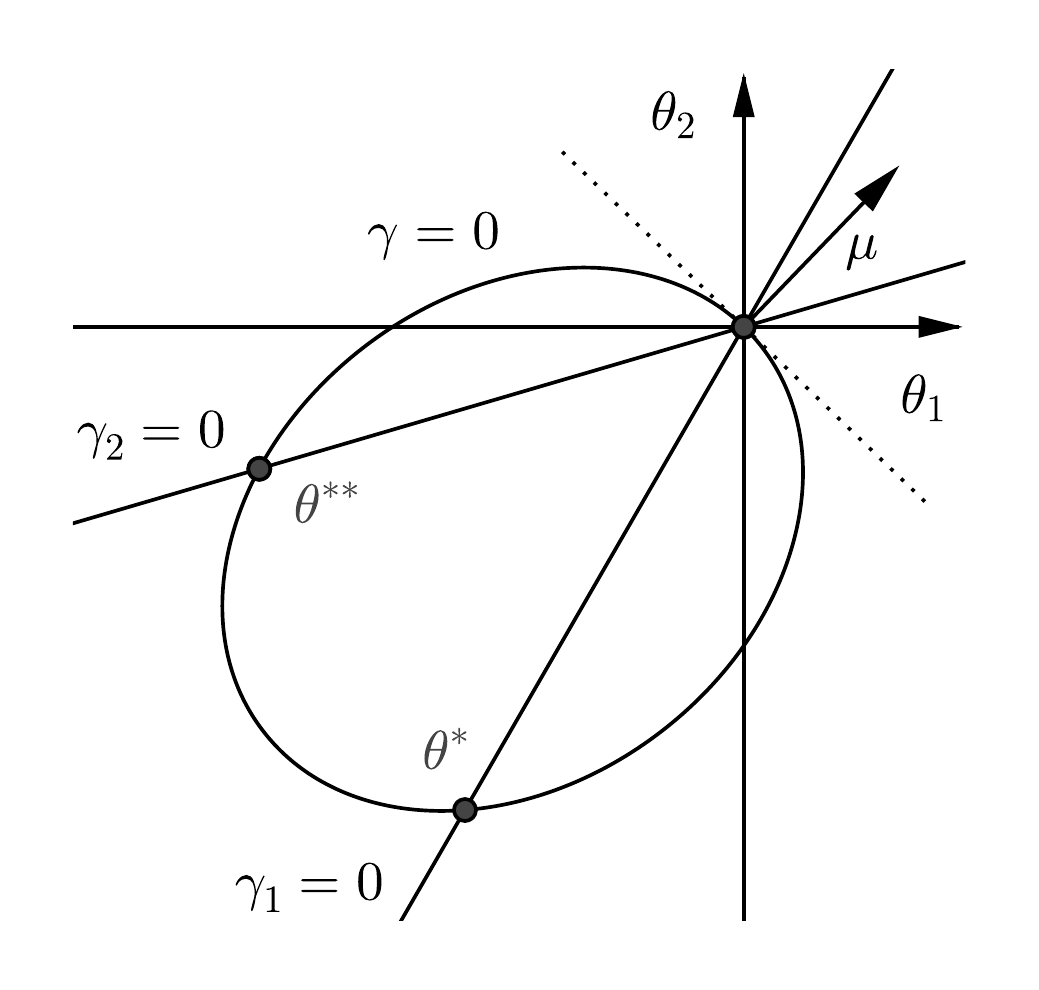}
\caption{Ellipse $\gamma=0$, straight lines $\gamma_1=0$ and $\gamma_2=0$ and intersection points $\theta^*$ and $\theta^{**}$}
\label{fig:ellipse}
\end{figure}

\begin{prop}[Functional equation]
\label{propeqfoncgreen}
\rems{Assume that $\mu_1>0$ and $\mu_2>0$.}
\remst{In the transient case, that is when ${{Z(t)}}$ tends to infinity a.s.,}\rems{Denoting by} $x$ the starting point \rems{of the transient process $Z$}, the following formula holds
\begin{equation}
\label{eq:functional_eq_green}
- \gamma (\theta) \psi (\theta) = \gamma_1 (\theta) \psi_1 (\theta_2) + \gamma_2 (\theta) \psi_2 (\theta_1) + e^{\theta \cdot x}
\end{equation}
for all $\theta=(\theta_{1},\theta_2 )\in \mathbb{C}^2$ such that \rems{$\Re \theta \cdot \mu <0 $ and such that} the integrals $\psi(\theta)$, $\psi_1(\theta_2)$ and $\psi_2(\theta_1)$ are finite.
%$\psi(\theta)<\infty$,
%$\psi_1(\theta_2)<\infty$ and $\psi_2(\theta_1)<\infty$.
Furthermore\remst{ when $\mu_1>0$ and $\mu_2>0$}:
%\begin{itemize}
%\item when $\mu_1>0$ and $\mu_2>0$:
\begin{itemize}
\item 
$\psi_1(\theta_2) $ is finite on $\{\theta_2\in \mathbb{C} : \Re\theta_2\leqslant \theta_2^{**} %\vee 0  
\}
$,
\item
$\psi_2(\theta_1)$ is finite on $\{\theta_1\in \mathbb{C} : \Re\theta_1 \leqslant \theta_1^* % \vee 0 
\}
$,
\item $\psi(\theta)$ is finite on $\{\theta\in \mathbb{C}^2 : 
\Re\theta_1 < \theta_1^*\wedge 0 \text{ and } \Re\theta_2 <\theta_2^{**}\wedge 0
%\vee 0 \text{ and }
% \Re \theta \cdot \mu <0  
  \}
\rems{  \subset \{\theta\in \mathbb{C}^2 : 
\Re\theta \cdot \mu <0   \}}
$.
\end{itemize}
%\item when $\mu_1<0$ and $\mu_2<0$: 
%\begin{itemize}
%\item $\psi_1(\theta_2) $ is finite on $\{\theta_2\in \mathbb{C} : \Re\theta_2< 0   \}
%$,
%\item
%$\psi_2(\theta_1)$ is finite on $\{\theta_1\in \mathbb{C} : \Re\theta_1 < 0  \}
%$, 
%\item $\psi(\theta)$ is finite on $\{\theta\in \mathbb{C}^2 : \Re\theta_1 < 0 \text{ and } \Re\theta_2< 0  \}
%$;
%\end{itemize}
%\item 
%when $\mu_1>0$ and $\mu_2\leqslant 0$: 
%\begin{itemize}
%\item $\psi_1(\theta_2) $ is finite on $\{\theta_2\in \mathbb{C} : \Re\theta_2\leqslant \theta_2^{**}  \vee 0 \}
%$,
%\item $\psi_2(\theta_1)$ is finite on $\{\theta_1\in \mathbb{C} : \Re\theta_1 < 0  \}
%$, 
%\item $\psi(\theta)$ is finite on $\{\theta\in \mathbb{C}^2 : \Re\theta_1 < 0 \text{ and } \Re\theta_2\leqslant \theta_2^{**}  \vee 0 \}
%$;
%\end{itemize}
%\item when $\mu_1 \leqslant 0$ and $\mu_2>0$: \begin{itemize}
%\item $\psi_1(\theta_2) $ is finite on $\{\theta_2\in \mathbb{C} :  \Re\theta_2<0 \}
%$,
%\item $\psi_2(\theta_1)$ is finite on $\{\theta_1\in \mathbb{C} : \Re\theta_1 \leqslant  \theta_1^{*}  \vee0  \}
%$, 
%\item $\psi(\theta)$ is finite on $\{\theta\in \mathbb{C}^2 : \Re\theta_2 < 0 \text{ and } \Re\theta_1 \leqslant  \theta_1^{*}  \vee0  \}
%$.
%\end{itemize}
%\end{itemize}
\end{prop}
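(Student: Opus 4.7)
The plan is to apply Itô's formula to the smooth test function $f_\theta(z) := e^{\theta\cdot z}$ along the semimartingale decomposition $Z(t) = x + W(t) + \mu t + RL(t)$, take expectations, and pass to the limit $t\to\infty$. A short computation gives $\nabla f_\theta = \theta f_\theta$ and $D^2 f_\theta = (\theta\theta^{\top})f_\theta$, so that the generator term $\tfrac{1}{2}\theta\cdot\Sigma\theta + \mu\cdot\theta$ is exactly $\gamma(\theta)$ and the reflection coefficients $R_i\cdot\theta$ are exactly $\gamma_i(\theta)$, matching the definitions in \eqref{intro:def:gamma}. Itô's formula then yields the pathwise identity
$$e^{\theta\cdot Z(t)} - e^{\theta\cdot x} \;=\; \mathcal{M}_t \;+\; \gamma(\theta)\int_0^t e^{\theta\cdot Z(s)}\,\mathrm{d}s \;+\; \sum_{i=1,2}\gamma_i(\theta)\int_0^t e^{\theta\cdot Z(s)}\,\mathrm{d}L_i(s),$$
where $\mathcal{M}_t := \int_0^t e^{\theta\cdot Z(s)}\,\theta\cdot\mathrm{d}W_s$ is a continuous local martingale, and where on the support of $\mathrm{d}L_i$ one has $Z_i(s) = 0$, so the $i$th boundary integral reduces to $\int_0^t e^{\theta_j Z_j(s)}\,\mathrm{d}L_i(s)$ with $j=3-i$.

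I would first prove the functional equation on the smaller subdomain $\{\Re\theta_1<0,\ \Re\theta_2<0\}$. There the integrand $|e^{\theta\cdot Z(s)}|\leqslant 1$, so $\mathcal{M}$ is a bounded, hence uniformly integrable, martingale with zero mean. Since $Z(t)\to\infty$ almost surely (transience combined with $\mu_1,\mu_2>0$) and $\Re\theta_1 Z_1(t)+\Re\theta_2 Z_2(t)\to -\infty$, dominated convergence gives $\mathbb{E}_x[e^{\theta\cdot Z(t)}]\to 0$. Taking expectations in the Itô identity, using Fubini, and then letting $t\to\infty$ by monotone convergence yields \eqref{eq:functional_eq_green} on this subdomain, together with the finiteness of $\psi,\psi_1,\psi_2$ there.

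To upgrade the finiteness to the stated half-planes, I would specialise the pre-limit identity at the critical points of the kernel. At $\theta=\theta^*$ both $\gamma(\theta^*)=0$ and $\gamma_1(\theta^*)=0$ by definition, so the identity collapses to
$$\mathbb{E}_x[e^{\theta^*\cdot Z(t)}] \;=\; e^{\theta^*\cdot x} \;+\; \gamma_2(\theta^*)\,\mathbb{E}_x\!\left[\int_0^t e^{\theta_1^* Z_1(s)}\,\mathrm{d}L_2(s)\right].$$
Using $\gamma_1(\theta^*)=0$ one computes $\gamma_2(\theta^*)=(1-r_{12}r_{21})\theta_2^*$, whose sign is determined by the existence condition~\eqref{eq:existence} and by the geometry of the ellipse relative to $\mu$ (Remark~\ref{rem:driftnormal}). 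A Lyapunov/optional-stopping argument based on this identity (where $f_{\theta^*}$ is a harmonic function for the absorbed process on the interior and one face) bounds $\sup_{t\geqslant 0}\mathbb{E}_x[e^{\theta^*\cdot Z(t)}]$, and monotone convergence then yields $\psi_2(\theta_1^*)<\infty$; monotonicity of $|e^{\theta_1 Z_1(s)}|$ in $\Re\theta_1\leqslant\theta_1^*$ extends the finiteness to the full half-plane. The argument for $\psi_1$ is symmetric, using $\theta=\theta^{**}$ where $\gamma=\gamma_2=0$. Finiteness of $\psi$ on the open tube $\{\Re\theta_1<\theta_1^*\wedge 0,\ \Re\theta_2<\theta_2^{**}\wedge 0\}$ then follows by Fubini from the finiteness of the two boundary transforms (the inclusion in $\{\Re\theta\cdot\mu<0\}$ is automatic since the ellipse $\gamma=0$ lies in $\{\theta\cdot\mu\leqslant 0\}$). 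Finally, both sides of~\eqref{eq:functional_eq_green} are holomorphic on the open set of absolute convergence, so the identity extends from the small subdomain to the full stated domain by analytic continuation.

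The main obstacle is the uniform boundedness of $\mathbb{E}_x[e^{\theta^*\cdot Z(t)}]$: depending on the sign of $\mu_2-r_{21}\mu_1$, one of the components of $\theta^*$ can be positive, so the exponential is not trivially bounded on the first quadrant. Settling this requires exploiting the relative positions of the ellipse $\gamma=0$, the line $\gamma_1=0$, and the drift $\mu$, together with the transience of $Z$, in order to produce the Lyapunov estimate. All other steps are essentially bookkeeping once this control is secured.
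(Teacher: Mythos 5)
Your architecture is the paper's: Itô's formula applied to $e^{\theta\cdot z}$, expectations, the limit $t\to\infty$, and the evaluation at $\theta^*$ and $\theta^{**}$ (where two of the three kernel coefficients vanish) to get finiteness of $\psi_2$ and $\psi_1$ on half-planes. Two steps, however, are genuine gaps rather than bookkeeping. First, the claim that letting $t\to\infty$ on $\{\Re\theta_1<0,\ \Re\theta_2<0\}$ ``yields \eqref{eq:functional_eq_green} together with the finiteness of $\psi,\psi_1,\psi_2$ there'' does not follow. Monotone convergence gives each of the three nondecreasing integrals a limit in $[0,\infty]$, but the coefficients $\gamma(\theta)$, $\gamma_1(\theta)=\theta_1+r_{21}\theta_2$ and $\gamma_2(\theta)=r_{12}\theta_1+\theta_2$ can have mixed signs on that region (e.g.\ when $r_{12}$ or $r_{21}$ is negative), so boundedness of the linear combination does not force each term to be finite. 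Moreover the conclusion itself is doubtful: the proposition only asserts finiteness of $\psi_2$ on $\{\Re\theta_1\leqslant\theta_1^*\}$, and $\theta_1^*$ can be negative, in which case $\{\Re\theta_1<0\}$ is strictly larger than the guaranteed convergence half-plane. The paper sidesteps all of this by proving the functional equation directly at any $\theta$ with $\Re\theta\cdot\mu<0$ at which the three integrals are \emph{assumed} finite (so no detour through a subdomain and no analytic continuation is needed), and by obtaining finiteness only through the $\theta^*$, $\theta^{**}$ evaluations.

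Second, the step you yourself call ``the main obstacle'' --- controlling $\mathbb{E}_x[e^{\theta^*\cdot Z(t)}]$ when a component of $\theta^*$ is positive --- is left as an uncarried-out sketch (``a Lyapunov/optional-stopping argument''), yet it is the crux of every finiteness claim. The paper's route is Remark~\ref{rem:driftnormal}: $\theta^*$ lies on the ellipse $\gamma=0$, which is contained in $\{\Re\theta\cdot\mu<0\}$ because $\mu$ is the outward normal to the ellipse at the origin; combined with $\theta\cdot Z(t)/t\to\theta\cdot\mu$ this gives $\theta^*\cdot Z(t)\to-\infty$ a.s.\ and hence $\mathbb{E}_x[e^{\theta^*\cdot Z(t)}]\to 0$, after which the pre-limit identity collapses to $\psi_2(\theta_1^*)=-e^{\theta^*\cdot x}/\gamma_2(\theta^*)<\infty$. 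You either need to actually produce your Lyapunov estimate or substitute this geometric argument. The remaining ingredients of your proposal --- the computation $\gamma_2(\theta^*)=(1-r_{12}r_{21})\theta_2^*$, the monotone extension to $\{\Re\theta_1\leqslant\theta_1^*\}$ using $Z_2=0$ on the support of $\mathrm{d}L_2$, and the treatment of $\psi$ from the two boundary transforms --- match the paper, which handles $\psi$ by evaluating at $(\theta_1^*\wedge0,\theta_2^{**}\wedge0)$ and perturbing by $\epsilon$ when $\gamma$ vanishes there.
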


\begin{proof}
The proof of this functional equation is a consequence of Ito's formula.
For $f\in \mathcal{C}^2 (\mathbb{R}_+^2)$ %(the set of twice %continuously 
%differentiable functions on $\mathbb{R}_+^2$
%such that the function, its first and
%second-order derivatives are bounded) 
we have
\begin{equation*} %\label{a}
 f({\rems{Z(t)}})-f(Z_0)=\int_0^t \nabla f({\rems{Z(s)}})\rems{\cdot}\mathrm{d} W_s + \int_0^t \mathcal{L}f({\rems{Z(s)}}) \, \mathrm{d} s + \sum_{i=1}^2 \int_0^t  R_i \cdot \nabla f({\rems{Z(s)}}) \, \mathrm{d} {\rems{L_i(t)}},
\end{equation*}
where $\mathcal{L}$ is the generator defined in \eqref{eq:def_L_generateur}.
Choosing $f(z)=e^{\theta \cdot z}$ for $z\in\mathbb{R}_+^2$ and taking the expectation of the last equality we obtain :
\begin{equation}
\mathbb{E}_{x} [ e^{\theta \cdot {\rems{Z(t)}}} ] - e^{\theta \cdot x}
=
0 
+ 
\gamma (\theta)   \mathbb{E}_{x} \left[ \int_0^t  e^{\theta\cdot {\rems{Z(s)}}} \, \mathrm{d} s \right]
+ 
\sum_{i=1}^2 \gamma_i (\theta)  \mathbb{E}_{x} \left[  \int_0^t e^{\theta\cdot {\rems{Z(s)}}} \, \mathrm{d} {\rems{L_i(t)}} \right].
\label{eq:democonvergence}
\end{equation}
Indeed %we know that 
$\int_0^t \nabla f({\rems{Z(s)}})\rems{\cdot}\mathrm{d} W_s $ is a martingale %because $\nabla f$ is bounded ($\Re\theta_i<0$ for $i=1,2$)
and then its expectation is zero.
Now let $t$ tend to infinity. 
%Then ${\rems{Z(t)}}$ goes a.s. to infinity and is equivalent to $\mu t$. 
Due to \eqref{eq:drift_positive_condition} we have $\theta \cdot {\rems{Z(t)}} / t \underset{t\to\infty}{\longrightarrow}\theta \cdot \mu$ 
%\rems{and then $\Re\theta \cdot {{Z(t)}} \to \pm\infty$ depending of the sign of $\Re\theta \cdot \mu$.}
Choosing $\theta$ such that\remst{ $\psi(\theta)$ is finite} \rems{$\Re\theta \cdot \mu <0$ then} implies that $\Re\theta \cdot {\rems{Z(t)}} \to -\infty$. %\rems{(otherwise $\Re\theta \cdot {\rems{Z(t)}} \to +\infty$ and the integral which defines $\psi(\theta)$ would not converge)}.
%as $\Re\theta_i <0$ for $i=1,2$ we have
We deduce that $\mathbb{E}_{x} [e^{\theta \cdot {\rems{Z(t)}}} ] \underset{t\to\infty}{\longrightarrow} 0 $. The expectations of the following formula being finite by 
hypothesis, we obtain 
$$
0 - e^{\theta \cdot x}
=
\gamma (\theta)   \mathbb{E}_{x} \left[ \int_0^{\infty}e^{\theta \cdot {\rems{Z(s)}}} \, \mathrm{d} s \right]
+ 
\sum_{i=1}^2 \gamma_i (\theta)  \mathbb{E}_{x} \left[ \int_0^{\infty}e^{\theta \cdot {\rems{Z(s)}}} \,  \mathrm{d}  {\rems{L_i(t)}} \right] 
$$
which is the desired equation~\eqref{eq:functional_eq_green}.
%$$0 = \gamma (\theta) \phi (\theta) + \gamma_1 (\theta) \phi_1 (\theta_2) + \gamma_2 (\theta) \phi_2 (\theta_1) + \exp \langle \theta | z_0 \rangle
%\  \ \forall \theta_i < 0
%$$

Let us now assume that $\theta=\theta^*$ in equality \eqref{eq:democonvergence}, we obtain
$$
\mathbb{E}_{x} [ e^{\theta^* \cdot {\rems{Z(t)}}} ] - e^{\theta^* \cdot x}
=
\gamma_2 (\theta^*)  \mathbb{E}_{x} \left[  \int_0^t e^{\theta^*\cdot {\rems{Z(s)}}} \, \mathrm{d} {\rems{L_2(t)}} \right].
$$
Let $t$ \rems{tend} to infinity. Thanks to Remark \ref{rem:driftnormal} we have $\theta^* \cdot {\rems{Z(t)}}\to -\infty$ and we obtain
$$
\psi_2(\theta_1^*)=
\mathbb{E}_{x} \left[  \int_0^\infty e^{\theta^*\cdot {\rems{Z(s)}}} \, \mathrm{d} {\rems{L_2(t)}} \right]
=\frac{- e^{\theta^* \cdot x}}{
\gamma_2 (\theta^*)}  <\infty.
$$
It implies that $\psi_2(\theta_1)$ is finite for all $\Re\theta_1 \leqslant \theta_1^*$. On the same way we obtain that $\psi_1(\theta_2)$ is finite for all $\theta_2 \leqslant \theta_2^{**}$. 
%Furthermore, $\phi_2(\theta_1)$ is finite for $\Re \theta_1 \leqslant 0$ because $\mathbb{E}[L^2_\infty]<\infty$ (since we assume that $\mu_1>0$ and $\mu_2>0$ we have ${\rems{Z(t)}} \underset{t\to\infty}{\sim} \mu t $ ??). On the same way, $\phi_1(\theta_2)$ is finite for $\Re \theta_2 \leqslant 0$.
%Since we assume that $\mu_1>0$ and $\mu_2>0$ we have ${\rems{Z(t)}} \underset{t\to\infty}{\sim} \mu t $

Now assume that $\theta$ satisfies $\Re\theta_1< \theta_1^* \wedge 0$, $\Re\theta_2< \theta_2^{**}\wedge 0$ %and $ \Re \theta \cdot \mu <0$ 
and let us deduce that the Laplace transform
 $\psi(\theta_1,\theta_2)$ is finite. %The last inequality tel us
Thanks to \eqref{eq:drift_positive_condition} we have $ \Re \theta \cdot \mu <0$ and then $\mathbb{E}_{x} [e^{\theta \cdot {\rems{Z(t)}}} ] \underset{t\to\infty}{\longrightarrow} 0 $. % since $\mu_1>0$ and $\mu_2>0$ implies ${\rems{Z(t)}} \underset{t\to\infty}{\sim} \mu t $. 
Let us consider two cases:
\begin{itemize}
\item if $\gamma (\theta_1^*\wedge 0, \theta_2^{**}\wedge 0) \neq 0$, taking $\theta=(\theta_1^*\wedge 0, \theta_2^{**}\wedge 0)$ and letting $t$ \rems{tend} to infinity in \eqref{eq:democonvergence}, we obtain that $\psi(\theta_1^*\wedge 0, \theta_2^{**}\wedge 0)$ is finite. Then $\psi(\theta_1,\theta_2)$ is finite for all $(\theta_1,\theta_2)$ such that $\Re\theta_1 \leqslant \theta_1^*\wedge 0$ and $\Re\theta_2\leqslant \theta_2^{**}\wedge 0$.
\item if $\gamma (\theta_1^*\wedge 0, \theta_2^{**}\wedge 0) = 0$ it is possible to find $\epsilon>0$ as small as we want such that
$\gamma (\theta_1^*\wedge 0-\epsilon, \theta_2^{**}\wedge 0-\epsilon) \neq 0$. \rems{In} the same way that in the previous case we deduce that $\psi(\theta_1,\theta_2)$ is finite for all $(\theta_1,\theta_2)$ such that $\Re\theta_1< \theta_1^*\wedge 0$ and $\Re\theta_2< \theta_2^{**}\wedge 0$.
\end{itemize} 
\end{proof}

\subsection{Kernel}
\label{subsec:noyau}

The kernel $\gamma$ defined in \eqref{intro:def:gamma} can be written as
\begin{equation*}
\label{eq:alternative_expression_kernel_BM}
     \gamma(\theta_1, \theta_2)
    % = \widetilde{a}(\theta_2)\theta_1^2+\widetilde{b}(\theta_2)\theta_1+\widetilde{c}(\theta_2)
     =a(\theta_1)\theta_2^2+b(\theta_1)\theta_2+c(\theta_1),
\end{equation*}
where $a,b,c$ %(resp. $\widetilde{a},\widetilde{b},\widetilde{c}$) 
are polynomials in $\theta_1$ such that %(resp. $\theta_2$). 
\begin{equation*}
a(\theta_1)= \frac{1}{2}\sigma_{22},
\qquad
b(\theta_1)=\sigma_{12}\theta_1+\mu_2,
\qquad
c(\theta_1)=\frac{1}{2}\sigma_{11}\theta_1^2+\mu_1\theta_1.
\end{equation*}
Let $d (\theta_1) = b^2 (\theta_1) -4a(\theta_1)c(\theta_1)$ be the discriminant. It has two real zeros $\theta_1^\pm$ of opposite sign
which are equal to
\begin{equation}
\theta_1^\pm= \frac{(\mu_2\sigma_{12}-\mu_1\sigma_{22}) \pm \sqrt{(\mu_2\sigma_{12}-\mu_1\sigma_{22})^2 +\mu_2^2 \det{\Sigma}}}{\det\Sigma}.
\label{eq:definition_theta_pm}
\end{equation}
\remst{Cancelling the kernel }We define $\Theta_2(\theta_1)$ a bivalued algebraic function which has two branch points $\theta_1^\pm$ by $\gamma(\theta_1,\Theta_2(\theta_1))= 0$. %, et de même $\Theta_2(\theta_1)$ tel que $\gamma(\theta_1, \Theta_2(\theta_1))= 0$. 
We define the two branches $ \Theta_2^\pm$ on the cut plane $\mathbb{C}\setminus ((-\infty,\theta_1^-)\cup(\theta_1^+,\infty))$ by
$\Theta_2^\pm(\theta_1)
     =\frac{-b(\theta_1)\pm\sqrt{d(\theta_1)}}{2a(\theta_1)} $, that is
\begin{align}
     %\Theta_1^\pm(\theta_2)=\frac{-\widetilde{b}(\theta_2)\pm\sqrt{\widetilde{d}(\theta_2)}}{2\widetilde{a}(\theta_2)},\qquad
     \Theta_2^\pm(\theta_1)
%     &=\frac{-b(\theta_1)\pm\sqrt{d(\theta_1)}}{2a(\theta_1)} 
%     \\
&=
\dfrac{-(\sigma_{12}\theta_1+\mu_2)\pm\sqrt{\theta_1^2(\sigma_{12}^2-\sigma_{11}\sigma_{22})+2\theta_1(\mu_2\sigma_{12}-\mu_1\sigma_{22})+\mu_2^2}}{\sigma_{22}}    .
\label{eq:definition_Theta_pm}
\end{align}
 %$\widetilde d (\theta_2)= {\widetilde b}^2 (\theta_2) - 4 \widetilde a (\theta_2) \widetilde c (\theta_2)$ and 
On $(-\infty,\theta_1^-)\cup(\theta_1^+,\infty)$ the discriminant $d$ is negative and the branches $\Theta_2^\pm$ take conjugate complex values on this set. It will imply that the curve $\R$ defined in equation \eqref{eq:curve_definition1} is \rems{symmetric with respect to} the \rems{horizontal axis}.
On the same way we define $\theta_2^\pm$ and $\Theta_1^\pm$, it yields
$$
\theta_2^\pm = \frac{(\mu_1\sigma_{12}-\mu_2\sigma_{11}) \pm \sqrt{(\mu_1\sigma_{12}-\mu_2\sigma_{11})^2 +\mu_1^2 \det{\Sigma}}}{\det\Sigma}
$$
and
\begin{equation}
\label{eq:definition_Theta_pm1}
\Theta_1^\pm(\theta_2)
=\dfrac{-(\sigma_{12}\theta_2+\mu_1)\pm\sqrt{\theta_2^2(\sigma_{12}^2-\sigma_{11}\sigma_{22})+2\theta_2(\mu_1\sigma_{12}-\mu_2\sigma_{11})+\mu_1^2}}{\sigma_{11}}.
%=\frac{-\widetilde{b}(\theta_2)\pm\sqrt{\widetilde{d}(\theta_2)}}{2\widetilde{a}(\theta_2)},\qquad
%     \Theta_2^\pm(\theta_1)=\frac{-b(\theta_1)\pm\sqrt{d(\theta_1)}}{2a(\theta_1)}
\end{equation}
\rems{The previous formulas can also be found in \cite[(7) and (8)]{franceschi_explicit_2017}.}
%CES fonctions peuvent définir une surface de Riemann $\gamma=0$

\subsection{Holomorphic continuation}
\label{subsec:continuation}

The boundary value problem satisfied by $\psi_1(\theta_2)$ in Section~\ref{sec:BVP} lies on a curve outside of the convergence domain established in Proposition~\ref{propeqfoncgreen} that is $\{ \theta_2\in\mathbb{C} : \Re \theta_2 \leqslant \theta_2^{**} \}$. That is why we extend holomorphically the Laplace transform $\psi_1$.
We assume that the transient condition \eqref{eq:transient_condition} is satisfied.
\begin{lem}[Holomorphic continuation]
\label{lem:continuation_BM}
The Laplace transform $\psi_1$ may be holomorphically extended to the open %simply connected 
set 
\begin{equation}
\label{eq:domain_continuation}
     \{\theta_2\in\mathbb{C}\setminus (\theta_2^+,\infty) : \Re
    \, \theta_2
    < \theta_2^{**} 
      \text{ or } \Re\, \Theta_1^-(\theta_2) < \theta_1^*\}.
\end{equation}
\end{lem}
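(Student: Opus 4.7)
The plan is to derive from the functional equation~\eqref{eq:functional_eq_green} an alternative formula for $\psi_1$ that is manifestly holomorphic on a domain larger than the one of convergence of the defining Laplace integral. Specializing~\eqref{eq:functional_eq_green} at $\theta=(\Theta_1^-(\theta_2),\theta_2)$ makes the kernel $\gamma$ vanish, and, assuming the denominator is nonzero, solving for $\psi_1$ gives
\begin{equation*}
\psi_1(\theta_2)\;=\;-\,\frac{\gamma_2(\Theta_1^-(\theta_2),\theta_2)\,\psi_2(\Theta_1^-(\theta_2))\;+\;e^{\Theta_1^-(\theta_2)\,x_1+\theta_2\,x_2}}{\gamma_1(\Theta_1^-(\theta_2),\theta_2)}.
\end{equation*}
I would first establish this identity on a small open subset of the original convergence domain $\{\Re\theta_2<\theta_2^{**}\wedge 0\}$, chosen so that the point $(\Theta_1^-(\theta_2),\theta_2)$ also satisfies the hypotheses of Proposition~\ref{propeqfoncgreen}; there both sides are legitimate and equality is a direct substitution.

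Next, I would use the formula as the \emph{definition} of the extension on the new region $\{\theta_2\in\mathbb{C}\setminus((-\infty,\theta_2^-)\cup(\theta_2^+,\infty)):\Re\Theta_1^-(\theta_2)<\theta_1^*\}$. On this set, $\Theta_1^-$ is holomorphic by~\eqref{eq:definition_Theta_pm1}; $\psi_2\circ\Theta_1^-$ is holomorphic by Proposition~\ref{propeqfoncgreen}; and the exponential and polynomials $\gamma_1,\gamma_2$ are entire, so the right-hand side is a priori meromorphic. The cut $(-\infty,\theta_2^-)$ that had to be removed to define $\Theta_1^-$ disappears from the union~\eqref{eq:domain_continuation} because it is contained in the original domain $\{\Re\theta_2<\theta_2^{**}\}$ on which the defining Laplace integral already provides a holomorphic $\psi_1$.

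The main technical obstacle is to show that the possible zeros of $\gamma_1(\Theta_1^-(\theta_2),\theta_2)=\Theta_1^-(\theta_2)+r_{21}\theta_2$ do not create genuine poles. A zero at $\theta_2^\circ$ means $(\Theta_1^-(\theta_2^\circ),\theta_2^\circ)$ lies simultaneously on the ellipse $\gamma=0$ and on the line $\gamma_1=0$; by the definition of $\theta^\ast$ this forces $\theta_2^\circ=\theta_2^\ast$ and $\Theta_1^-(\theta_2^\ast)=\theta_1^\ast$ (if instead the other branch realizes this intersection, no zero arises and there is nothing to prove). Evaluating~\eqref{eq:functional_eq_green} at $\theta=\theta^\ast$ produces the relation $\gamma_2(\theta^\ast)\psi_2(\theta_1^\ast)+e^{\theta^\ast\cdot x}=0$, which is exactly the boundary identity $\psi_2(\theta_1^\ast)=-e^{\theta^\ast\cdot x}/\gamma_2(\theta^\ast)$ already obtained in the proof of Proposition~\ref{propeqfoncgreen}; consequently the numerator also vanishes at $\theta_2^\ast$, the singularity is removable, and the formula defines a holomorphic function.

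Finally, an application of the identity theorem on the connected open set~\eqref{eq:domain_continuation} glues the new definition to the original $\psi_1$ across the overlap and provides the desired holomorphic continuation. I would expect the verification that~\eqref{eq:domain_continuation} is indeed an open connected set, and that the overlap with $\{\Re\theta_2<\theta_2^{**}\wedge 0\}$ is nonempty, to reduce to a straightforward inspection of the hyperbolic region $\{\Re\Theta_1^-(\theta_2)<\theta_1^\ast\}$ using the explicit expression~\eqref{eq:definition_Theta_pm1}; this is analogous to the geometric study already carried out in~\cite{franceschi_explicit_2017}.
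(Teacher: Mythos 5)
Your proposal is correct and follows essentially the same route as the paper: evaluate the functional equation along the branch $(\Theta_1^-(\theta_2),\theta_2)$ of the kernel's zero set, use the resulting formula to continue $\psi_1$ meromorphically onto $\{\Re\,\Theta_1^-(\theta_2)<\theta_1^*\}$, and show that the apparent pole where $\gamma_1(\Theta_1^-(\theta_2),\theta_2)=0$ is removable because the numerator vanishes via the identity $\psi_2(\theta_1^*)=-e^{\theta^*\cdot x}/\gamma_2(\theta^*)$. The only detail you gloss over is that the line $\gamma_1=0$ also meets the ellipse at the origin, so one must additionally check that $\Theta_1^-(0)\neq 0$ (true since $\mu_1>0$), exactly as the paper does.
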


\begin{proof} \rems{This proof is similar
to the one of Lemma 3 of \cite{franceschi_explicit_2017}.}
The Laplace transform $\psi_1$ is initially defined on $\{\theta_2\in\mathbb{C} : \Re
    \, \theta_2
    < \theta_2^{**} 
     \}$, see Proposition \ref{propeqfoncgreen}.
By evaluating the functional equation \eqref{eq:functional_eq_green} at $(\Theta_1^-(\theta_2),\theta_2)$\remst{ we cancel the kernel and} we have
\begin{equation}
\label{eq:prolongement}
     \psi_1(\theta_2)=-\frac{\gamma_2(\Theta_1^-(\theta_2),\theta_2)\psi_2(\Theta_1^-(\theta_2))
+
\exp (\Theta_1^-(\theta_2) x_1 + \theta_2 x_2)}{\gamma_1(\Theta_1^-(\theta_2),\theta_2)}   
\end{equation}
for $\theta_2$ in the open and non-empty set 
$\{\theta_2\in\mathbb{C} : \Re
    \, \theta_2
    < \theta_2^{**} 
      \text{ and } \Re\, \Theta_1^-(\theta_2) < \theta_1^*\}$. The formula \eqref{eq:prolongement} then allows to continue meromorphically $\psi_1$ on $\{\theta_2\in\mathbb{C} :  \Re\, \Theta_1^-(\theta_2) < \theta_1^*\}$. The potential poles may come from the zeros of $\gamma_1(\Theta_1^-(\theta_2),\theta_2)$. The points $\theta^*$ and $(0,0)$ are the only points \rems{at which $\gamma_1$ is $0$}. We notice that $\Theta_1^-(0)\neq 0$ as $\mu_1>0$. Then the only possible value in that domain \rems{at which the denominator of \eqref{eq:prolongement} takes the value $0$} is $\theta_2^*$  when $\theta^*=(\Theta_1^-(\theta_2^*),\theta_2^*)$. In that case $\theta_2^*<\theta_2^{**}$ and thanks to Proposition \ref{propeqfoncgreen} we deduce that $\psi_1(\theta_2^{*})$ is finite (which means that the numerator of \eqref{eq:prolongement} is zero). We conclude that $\psi_1$ is holomorphic in the domain \eqref{eq:domain_continuation}.
\end{proof}
This continuation is similar to what is done for the Laplace transform of the invariant measure in \cite{franceschi_explicit_2017,franceschi_tuttes_2016,
franceschi_asymptotic_2016}. In fact it would be possible to introduce the Riemann surface $\mathcal{S}=\{(\theta_1,\theta_2)\in\mathbb{C}^2:\gamma(\theta_1,\theta_2)=0\}$ which is a sphere and to continue meromorphically the Laplace transforms to the whole surface and even on its universal covering.

%Il est alors possible de poursuivre le prolongement à $\s$ tout entier en utilisant les propriétés d'invariance des automorphismes $\zeta$ et $\eta$ satisfaites par les transformées de Laplace sur la surface $\s$.
%
%La "premiere" singularity will be $\zeta \theta^*$ ? article frontière de Martin, asymptotique fonctions de Green
%
%$\G$ est inclu dans le domaine ci dessus. Pourquoi? uniformisation? $\R$ inclu ok

%We notice that the domain defined here %in~\eqref{eq:domain_continuation} 
%contains the domain $\G$ in Figure~\ref{BVPtheta}.

\section{A boundary value problem}
\label{sec:BVP}

The goal of this section is to establish and to solve the %following 
non-homogeneous Carleman boundary value problem with shift satisfied by $\psi_1 (\theta_2)$, the Laplace transform of Green's function on the \rems{vertical axis}. Here the shift is the complex conjugation. We will refer to the reference books on boundary value problems \cite{litvinchuk_solvability_2000,Mu-72,gakhov_boundary_66}
and one will see Appendix \ref{appendix:BVP} %for a short presentation
% or to Annexe C of THESE \cite{} 
for a brief survey %brief introduction 
of this theory.
In this section we will assume that transience condition \eqref{eq:transient_condition} is satisfied. 

\subsection{Boundary and domain}

This section is mostly technical. Before to state the BVP in Section \ref{subsec:carlemanBVP} we need to introduce the boundary $\R$ and the domain $\G$ where the BVP will be satisfied.

\subsubsection*{An hyperbola}
The curve $\R$ is a branch of hyperbola already introduced in \cite{baccelli_analysis_1987,franceschi_explicit_2017,franceschi_tuttes_2016}. We define $\R$ as
\begin{equation}
\label{eq:curve_definition1}
     \R =\{\theta_2\in\mathbb C: \gamma(\theta_1,\theta_2)=0 \text{ et } \theta_1\in(-\infty,\theta_1^-)\}=\Theta_2^\pm ((-\infty,\theta_1^-))
\end{equation}
and $\G$ as the open domain of $\mathbb{C}$ bounded by $\R$ on the right, %and containing $-\infty$, 
see Figure \ref{BVPtheta1}.
As we noticed in Section \ref{subsec:noyau} the curve $\R$ is \rems{symmetric} with respect to the horizontal axis, see Figure~\ref{BVPtheta1}. See %section 2.4 of
\cite{franceschi_explicit_2017,franceschi_tuttes_2016} or \cite[Lemma 9]{baccelli_analysis_1987} for more details and a study of this hyperbola. In particular the equation of the hyperbola is given by
\begin{equation}
\label{eq:hyperbole}
     \sigma_{22}(\sigma_{12}^2-\sigma_{11}\sigma_{22})x^2+\sigma_{12}^2\sigma_{22}y^2-2\sigma_{22}(\sigma_{11}\mu_2-\sigma_{12}\mu_1)x=\mu_2(\sigma_{11}\mu_2-2\sigma_{12}\mu_1).
\end{equation}
\rems{In} Figure \ref{fig:domainprolongement} one can see the shape of $\R$ according to the sign of the covariance $\sigma_{12}$. The part of $\R$ with negative imaginary part is denoted by $\R^-$.

\begin{figure}[hbtp]
\centering
\includegraphics[scale=0.25]{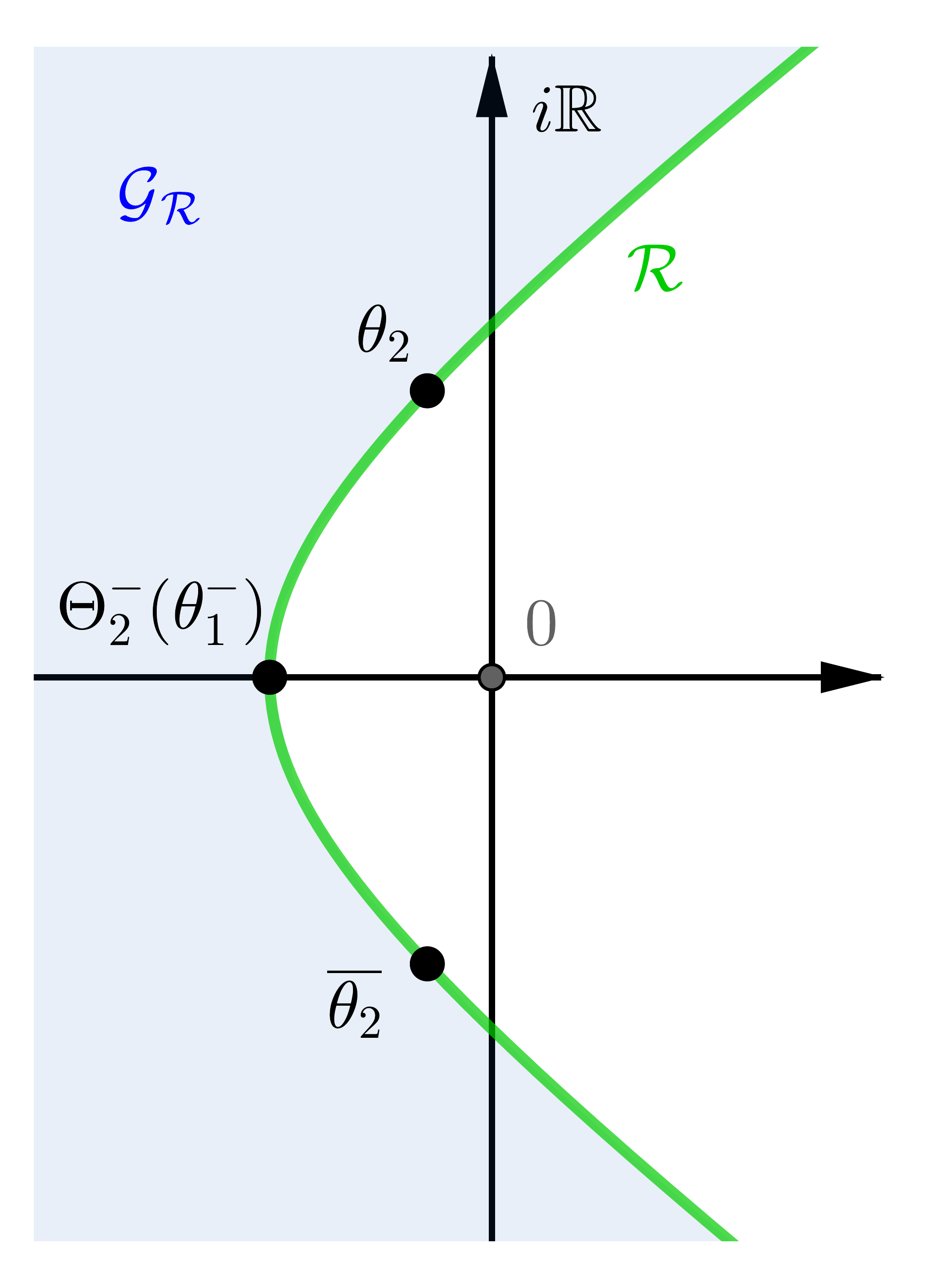}
\caption{Curve $\R$ defined in~\eqref{eq:curve_definition1} in green and domain $\G$ in blue}
\label{BVPtheta1}
\end{figure}

\subsubsection*{Continuation on the domain}
Together with Lemma \ref{lem:continuation_BM} the following lemma implies that $\psi_1$ may be holomorphically extended to a domain containing $\overline{\G}$.
\begin{lem}
\label{lem:domainincluded}
The set $\overline{\G}$ is strictly included in the domain 
$$
\{\theta_2\in\mathbb{C}\setminus (\theta_2^+,\infty) : \Re
    \, \theta_2
    < \theta_2^{**} 
      \text{ or } \Re\, \Theta_1^-(\theta_2) < \theta_1^*\}
      $$
      defined in \eqref{eq:domain_continuation}.
\end{lem}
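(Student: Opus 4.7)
The plan is to verify the two conditions that define the target domain~\eqref{eq:domain_continuation} separately on $\overline{\G}$: first, that $\overline{\G}$ avoids the real slit $(\theta_2^+,\infty)$; second, that the disjunction \emph{$\Re\theta_2 < \theta_2^{**}$ or $\Re\Theta_1^-(\theta_2) < \theta_1^*$} holds everywhere on $\overline{\G}$.

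For the first point, the rightmost real point of $\overline{\G}$ is the vertex $\theta_2^v$ of the hyperbola $\R$, namely $\theta_2^v=\Theta_2^+(\theta_1^-)=-(\sigma_{12}\theta_1^-+\mu_2)/\sigma_{22}$. Using the explicit formula~\eqref{eq:definition_theta_pm} for $\theta_1^-$ and the analogous one for $\theta_2^+$, one checks by direct algebra that $\theta_2^v<\theta_2^+$. Geometrically this says that the leftmost vertex of the ellipse $\gamma=0$ (in the $\theta_1$-direction) and its topmost vertex (in the $\theta_2$-direction) are distinct points of the curve, which holds because $\Sigma$ is positive definite and $\mu\neq 0$.

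For the disjunction I would first treat points on $\R$. Any $\theta_2\in\R$ has the form $\Theta_2^\pm(\theta_1)$ for some real $\theta_1\in(-\infty,\theta_1^-]$, so $\gamma(\theta_1,\theta_2)=0$ with $\theta_1$ real, and $\theta_1$ is therefore one of the two roots $\Theta_1^\pm(\theta_2)$. A continuity argument starting at the vertex of $\R$ (where $\Theta_1^+=\Theta_1^-=\theta_1^-$) together with sign tracking in the formula~\eqref{eq:definition_Theta_pm1} identifies $\Theta_1^-(\theta_2)$ as exactly this real $\theta_1$. Hence $\Re\Theta_1^-(\theta_2)=\theta_1\leqslant\theta_1^-$ on $\R$, and the required bound reduces to the strict inequality $\theta_1^-<\theta_1^*$. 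This holds because $\theta^*$ is the \emph{non-origin} intersection of the line $\gamma_1=0$ with the ellipse, and the positive-drift hypothesis~\eqref{eq:drift_positive_condition} forces this second intersection strictly to the right of the leftmost vertex. To extend to the interior of $\G$, I would split in two: if $\Re\theta_2<\theta_2^{**}$ the first alternative is immediate, otherwise the strict inequality $\Re\Theta_1^-(\theta_2)<\theta_1^*$ propagates from $\R$ into $\G$ using holomorphy of $\Theta_1^-$ on a neighborhood of $\overline{\G}$ (via Lemma~\ref{lem:continuation_BM} applied to~\eqref{eq:definition_Theta_pm1}) together with a connectedness argument on the open set $\{\theta_2\in\overline{\G}:\Re\Theta_1^-(\theta_2)<\theta_1^*\}$, which contains $\R$ and has boundary in $\overline{\G}$ only along a set where the first alternative $\Re\theta_2<\theta_2^{**}$ must be satisfied.

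The main obstacle is establishing the strict inequalities $\theta_2^v<\theta_2^+$ and $\theta_1^-<\theta_1^*$, which encode the non-degeneracy of the geometric configuration; both follow from the positivity of the drift and of $\Sigma$, but their verification requires careful manipulation of the explicit expressions for $\theta_1^\pm$, $\theta_2^\pm$ and $\theta^*$. A secondary technical point is identifying the branch of $\Theta_1^\pm$ that takes real values on $\R$, which is most cleanly handled by the continuity argument starting from the vertex.
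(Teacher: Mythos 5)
Your treatment of the boundary $\R$ itself is sound and matches the paper: on $\R$ the branch $\Theta_1^-$ takes the real value $\theta_1\in(-\infty,\theta_1^-)$ parametrizing the hyperbola, and $\theta_1^-\leqslant\theta_1^*$, so the second alternative holds there. (Note the paper only uses $\theta_1^-\leqslant\theta_1^*$, not the strict inequality $\theta_1^-<\theta_1^*$ you assert; strictness is supplied by the image of $\R$ lying in the \emph{open} interval $(-\infty,\theta_1^-)$.) The real problem is the last step, where you propagate the inequality $\Re\,\Theta_1^-<\theta_1^*$ from $\R$ into the region $S=\overline{\G}\cap\{\Re\,\theta_2\geqslant\theta_2^{**}\}$. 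Your connectedness argument is circular: you assert that the relative boundary of $U=\{\theta_2\in\overline{\G}:\Re\,\Theta_1^-(\theta_2)<\theta_1^*\}$ can only lie where $\Re\,\theta_2<\theta_2^{**}$, but a hypothetical interior point of $S$ with $\Re\,\Theta_1^-=\theta_1^*$ would be exactly such a boundary point violating that assertion, and nothing in your argument excludes it; connectedness alone cannot rule out an ``island'' inside $S$ on which $\Re\,\Theta_1^-\geqslant\theta_1^*$. The paper's mechanism is the maximum principle applied to the harmonic function $\Re\,\Theta_1^-$ on $S$, which requires first checking the boundary inequality on \emph{both} pieces of $\partial S$ --- the arc of $\R$ \emph{and} the segment of the vertical line $\theta_2^{**}+i\mathbb{R}$ (where your ``first alternative'' holds with equality, not strict inequality, so it does not apply); the paper verifies the latter by an explicit computation with \eqref{eq:definition_Theta_pm1}.

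The second, and more serious, omission is the behaviour at infinity. When $\sigma_{12}\geqslant 0$ the set $S$ is unbounded, and neither connectedness nor the naive maximum principle applies on an unbounded domain without controlling the function at infinity: a harmonic function can exceed its boundary supremum on an unbounded region. The paper resolves this by computing the asymptotics $\Theta_1^-(re^{\pm it})\sim r\sqrt{\sigma_{22}/\sigma_{11}}\,e^{\pm i(t+\beta)}$ for $t\in(\pi-\beta,\tfrac{\pi}{2})$, which shows that $\Re\,\Theta_1^-(re^{\pm it})\to-\infty$ in the angular sector occupied by $S$ near infinity, and only then invokes the maximum principle. Your proposal contains no control at infinity at all, so the argument fails precisely in the case $\sigma_{12}\geqslant 0$. (Your preliminary observation that $\overline{\G}$ avoids the slit $(\theta_2^+,\infty)$ because the vertex $\Theta_2^\pm(\theta_1^-)$ lies in $[\theta_2^-,\theta_2^+]$ is correct and is left implicit in the paper.)
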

\begin{proof}
This proof is similar %in each step 
to the one of Lemma 5 of \cite{franceschi_explicit_2017}.
First we notice that the set $\overline{\G} \cap \{\theta_2\in\mathbb{C} : \Re\, \theta_2 < \theta_2^{**} \}$ is included in the domain defined in \eqref{eq:domain_continuation}.
Then it remains to prove that the set 
$$S:= \overline{\G} \cap \{\theta_2\in\mathbb{C}  : \Re\, \theta_2 \geqslant \theta_2^{**}\}$$ is a subset of the domain \eqref{eq:domain_continuation}. 
More precisely, we show that $S$ is included in 
\begin{equation*}
    T:= \{\theta_2\in\mathbb{C} \setminus (\theta_2^+,\infty) : \Re\, \Theta_1^-(\theta_2) < \theta_1^*\}.
\end{equation*}
                                                                                                                                                                                                                                                                                                                                                                                                                                                                                                                                                                                                                                                                                                                                                                                                                                                                                      First of all, notice that the set $S$ is bounded by (a part of) the hyperbola $\R$ and (a part of) the straight line $\theta_2^{**}+i\mathbb{R}$. We denote 
%$\pm\widetilde{\theta}$ 
$\theta_2^{**}\pm i t_1$ the two intersection points of these two curves when they exist, see Figure~\ref{fig:domainprolongement}.
The definition of $\R$ implies that $\R \subset T$. 
Indeed the image of $\R$ by $\Theta_1^-$ is included in $(-\infty,\theta_1^-)$ and $\theta_1^-\leqslant\theta_1^*$.
Furthermore (the part of) $\theta_2^{**}+i\mathbb{R}$ that bounds $S$ also belongs to $T$ because for $t\in\mathbb{R}_+$ and using the fact that $\det \Sigma >0$ Equation \eqref{eq:definition_Theta_pm1} yields after some calculations
$$\begin{cases*}
    \Re \Theta_1^-(\theta_2^{**}\pm it)\leqslant
 \Re \Theta_1^-(\theta_2^{**})=\theta_1^{**}     
       <\theta_1^{*} , 
       & when $\theta_2^{**}\leqslant\Theta_2(\theta_1^-)$;
       \\
       \Re \Theta_1^-(\theta_2^{**}\pm i(t_1+t))\leqslant
 \Re \Theta_1^-(\theta_2^{**}\pm i t_1)    
       <\theta_1^{*}  ,
       & when $\theta_2^{**}>\Theta_2(\theta_1^-)$.
\end{cases*}$$
The inequality $\theta_1^{**}     
       <\theta_1^{*}$ \rems{follows from} the assumption that $\theta_2^{**}\leqslant\Theta_2(\theta_1^-)$ %, see Figure \ref{}, 
and the inequality $ \Re \Theta_1^-(\theta_2^{**}\pm i t_1)    
       <\theta_1^{*}$ \rems{follows from} the fact that $\theta_2^{**}\pm i t_1\in\R\subset T$. Let us denote \rems{$\beta=\arccos { \left( -\frac{\sigma_{12} }{\sqrt{\sigma_{11}\sigma_{22}}} \right)}$}.
To conclude we consider two cases:
\begin{itemize}
     \item $\sigma_{12}<0$ or equivalently $0<\beta<\frac{\pi}{2}$: the set $S$ is either empty or bounded, see the left picture on Figure~\ref{fig:domainprolongement}. Applying the maximum principle to the function $\Re \Theta_1^-$ %(which is analytic on $\mathbb{C}\setminus ((-\infty,\theta_2^-]\cup[\theta_2^+,\infty))$) 
     show that the image of every point of $S$ by $\Re \Theta_1^-$ is smaller than $\theta_1^*$ and then that $S$ is included in $T$.
     \item$\sigma_{12}\geqslant 0$ or equivalently $\frac{\pi}{2}\leqslant \beta<\pi$:
henceforth the set $S$ is unbounded as we can see on the right picture of Figure \ref{fig:domainprolongement}. It is \rems{no longer} possible to apply directly the maximum principle. However, to conclude we show that the image by $\Re \Theta_1^-$ of a point $re^{it}\in T$ near to infinity is smaller than $\theta_1^*$.
%     
%     we showed 
    The asymptotic directions of $\theta_1^*+i\mathbb{R}$ are $\pm \frac{\pi}{2}$ and \eqref{eq:hyperbole} %and \eqref{eq:def_beta} 
    implies that those of $\R$ are $\pm(\pi-\beta)$. Then as in the proof of Lemma 5 of \cite{franceschi_explicit_2017} we prove with \eqref{eq:definition_Theta_pm1} that 
    for $ t\in  (\pi-\beta, \frac{\pi}{2})$ we have
\begin{equation*}
\Theta_1^-(re^{\pm it})\underset{r\to\infty}{\sim}
r\sqrt{\frac{\sigma_{22}}{\sigma_{11}}} e^{\pm i(t+\beta)}.
\end{equation*}
   For $ t\in  (\pi-\beta, \frac{\pi}{2})$ this implies that 
    $\Re\Theta_1^-(re^{\pm it})\underset{r\to\infty}{\longrightarrow}-\infty $ and we obtain that $\Re \Theta_1^-(re^{\pm it})<\theta_1^*$ for $r$ large enough. As in the case $\sigma_{12}<0$ we finish the proof with the maximum principle.\qedhere
\end{itemize}
\end{proof}

\begin{figure}[hbtp]
\centering
\includegraphics[scale=0.5]{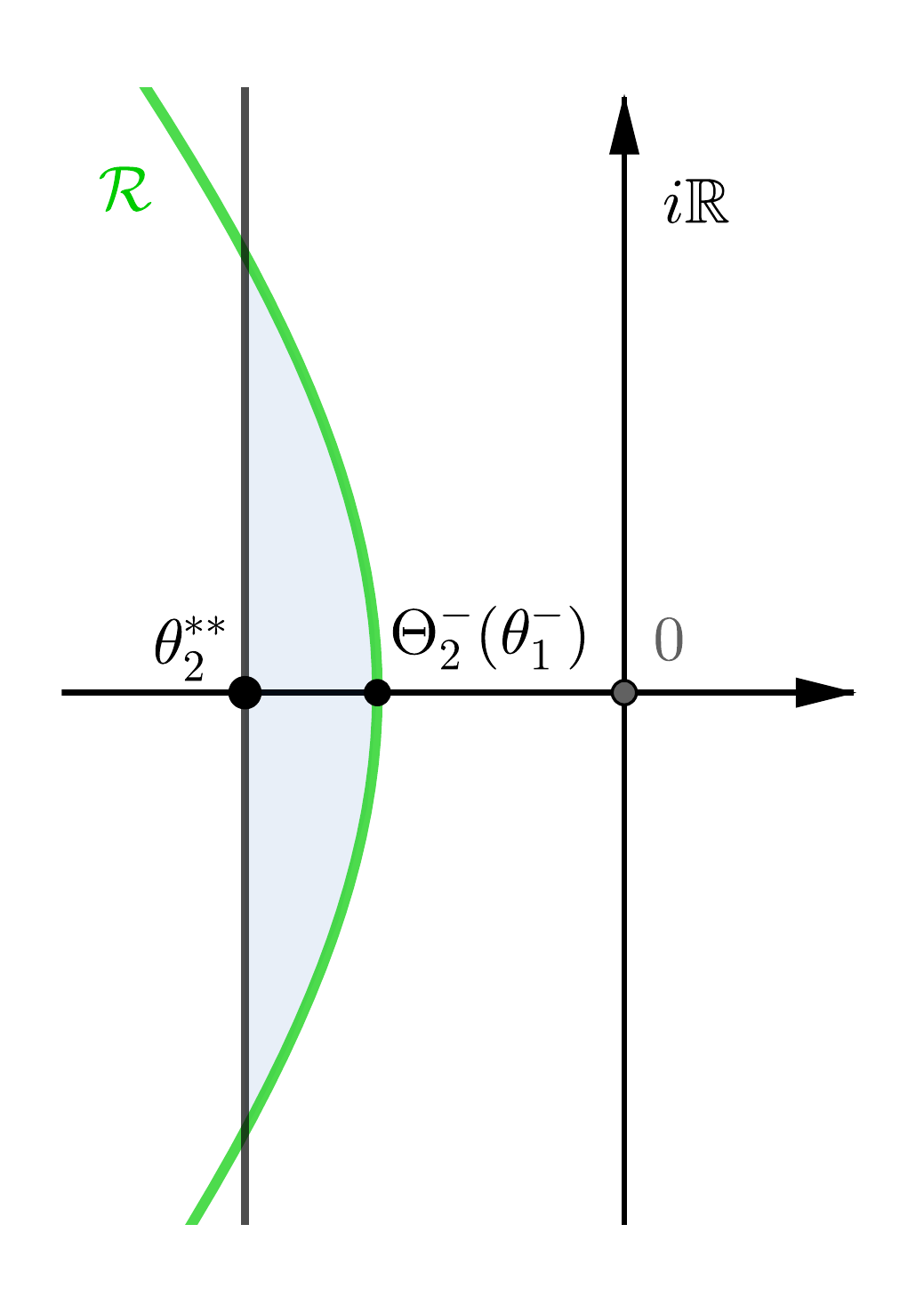}\qquad
\includegraphics[scale=0.5]{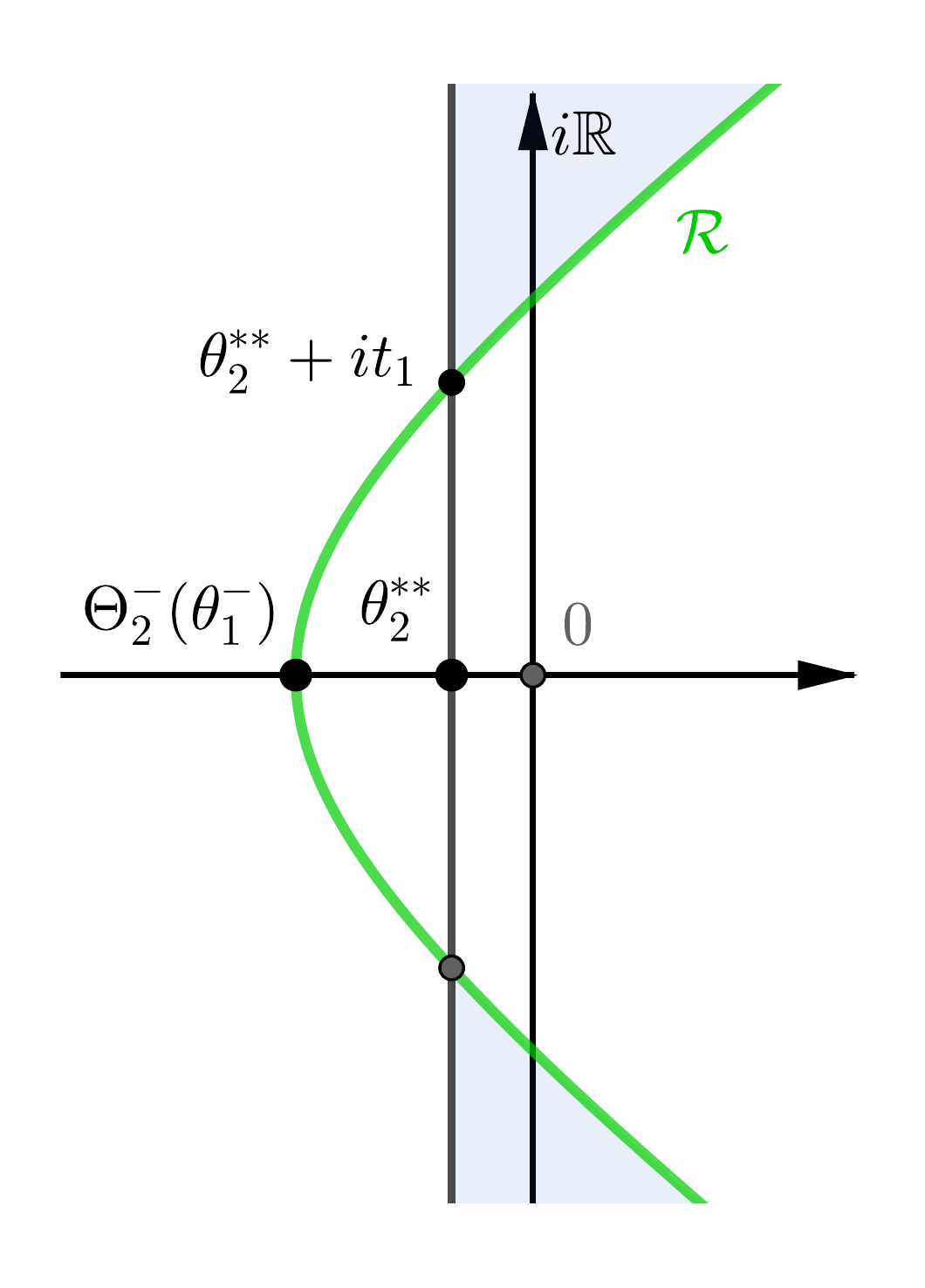}
\caption{On the left $\sigma_{12}<0$, and on the right $\sigma_{12}\geqslant 0$. The blue domain is the set $S$. %On the right, the two lines are the asymptotes to $\R$
}
\label{fig:domainprolongement}
\end{figure}

\subsection{Carleman boundary value problem}
\label{subsec:carlemanBVP}
We establish a \textit{boundary value problem} (BVP) with shift (here it is the complex conjugation) on the hyperbola $\R$. 
Let us define the functions $G$ and $g$ such that
\begin{align}
\label{eq:def:G}
     G(\theta_2)&:=\frac{\gamma_1}{\gamma_2}(\Theta_1^-(\theta_2),\theta_2)\frac{\gamma_2}{\gamma_1}(\Theta_1^-(\theta_2),\overline{\theta_2}),
\\
\label{eq:def:g}
g(\theta_2)&:=
\frac{\gamma_2}{\gamma_1}(\Theta_1^-(\theta_2),\overline{\theta_2})
\left(
\frac{e^{(\Theta_1^-(\theta_2),\theta_2) \cdot x}}{\gamma_2(\Theta_1^-(\theta_2),\theta_2)}
-
\frac{e^{(\Theta_1^-(\theta_2),\overline{\theta_2})\cdot x}}{\gamma_2(\Theta_1^-(\theta_2),\overline{\theta_2})}
\right)
.
\end{align}

\begin{lem}[BVP for $\psi_1$]
\label{lem:BVP}
The Laplace transform $\psi_1$ satisfies the following boundary value problem:
\begin{enumerate}[label={\rm (\roman{*})},ref={\rm (\roman{*})}]
     \item \label{BVPi} $\psi_1$ is analytic on $\G$, continuous on its closure $\overline{\G}$ and \rems{tends} to $0$ at infinity;   
     \item \label{BVPii} $\psi_1$ 
     satisfies the boundary condition
\begin{equation}
\label{eq:boundary_condition_general1}
     \psi_1(\overline{\theta_2})=G(\theta_2)\psi_1({\theta_2}) + g(\theta_2), \qquad \forall \theta_2\in \R.
\end{equation}
\end{enumerate}
%where $\R$ is the hyperbola
%\begin{equation}
%%\label{eq:curve_definition1}
%     \R =\{\theta_2\in\mathbb C: \gamma(\theta_1,\theta_2)=0 \text{ et } \theta_1\in(-\infty,\theta_1^-)\}=\Theta_2^\pm ((-\infty,\theta_1^-))
%\end{equation}
%and $\G$ is the open domain of $\mathbb{C}$ bounded by $\R$ and containing $0$, see Figure \ref{BVPtheta1}.
\end{lem}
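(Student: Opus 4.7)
The plan is to derive the boundary identity (ii) by eliminating $\psi_2$ between two instances of the functional equation \eqref{eq:functional_eq_green}, and to obtain the regularity assertion (i) from the continuation results already established, complemented by an asymptotic estimate along $\R$.

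For (i), analyticity on $\G$ and continuity on $\overline{\G}$ follow at once from Lemma \ref{lem:continuation_BM} combined with Lemma \ref{lem:domainincluded}. To handle the decay at infinity, I would rewrite $\psi_1$ on $\R$ by means of the continuation formula \eqref{eq:prolongement}: on $\R$ one has $\Theta_1^-(\theta_2)\in(-\infty,\theta_1^-)$ with $\Theta_1^-(\theta_2)\to-\infty$ as $|\theta_2|\to\infty$. Since $\psi_2$ is the Laplace transform of a positive measure supported on $[0,\infty)$, $\psi_2(\Theta_1^-(\theta_2))\to 0$ by dominated convergence, and the exponential $e^{\Theta_1^-(\theta_2)x_1+\theta_2 x_2}$ vanishes likewise on $\R$. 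The denominator $\gamma_1(\Theta_1^-(\theta_2),\theta_2)$ grows at most polynomially, so the quotient vanishes at infinity on $\R$, and the maximum-modulus principle applied to the holomorphic extension of $\psi_1$ propagates this decay to all of $\overline{\G}$.

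For (ii), the key observation is that $\R$ is symmetric with respect to the real axis and that, by \eqref{eq:curve_definition1}, for every $\theta_2\in\R$ the branch value $\Theta_1^-(\theta_2)$ is a real number in $(-\infty,\theta_1^-)$; since $\theta_2$ and $\overline{\theta_2}$ are the two conjugate roots of $\gamma(\Theta_1^-(\theta_2),\cdot)=0$, one has $\Theta_1^-(\overline{\theta_2})=\Theta_1^-(\theta_2)$. I would then evaluate \eqref{eq:functional_eq_green} at $(\Theta_1^-(\theta_2),\theta_2)$ and at $(\Theta_1^-(\theta_2),\overline{\theta_2})$: both specialisations annihilate the kernel $\gamma$ (the second uses that $\gamma$ has real coefficients), producing
\begin{align*}
0&=\gamma_1(\Theta_1^-(\theta_2),\theta_2)\,\psi_1(\theta_2)+\gamma_2(\Theta_1^-(\theta_2),\theta_2)\,\psi_2(\Theta_1^-(\theta_2))+e^{(\Theta_1^-(\theta_2),\theta_2)\cdot x},\\
0&=\gamma_1(\Theta_1^-(\theta_2),\overline{\theta_2})\,\psi_1(\overline{\theta_2})+\gamma_2(\Theta_1^-(\theta_2),\overline{\theta_2})\,\psi_2(\Theta_1^-(\theta_2))+e^{(\Theta_1^-(\theta_2),\overline{\theta_2})\cdot x}.
\end{align*}
Solving the first identity for $\psi_2(\Theta_1^-(\theta_2))$ and substituting into the second eliminates the unknown boundary value of $\psi_2$; grouping the surviving terms according to \eqref{eq:def:G}--\eqref{eq:def:g} produces exactly the stated boundary condition \eqref{eq:boundary_condition_general1}.

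The main technical obstacle is to legitimise the substitutions above for $\theta_2\in\R$, since the original functional equation of Proposition \ref{propeqfoncgreen} is asserted only on a smaller strip. I would address this by observing that on $\R$ we still have $\Theta_1^-(\theta_2)\in(-\infty,\theta_1^-)$, a region in which $\psi_2$ is given by its defining integral \eqref{eq:defpsi}; then Lemmas \ref{lem:continuation_BM} and \ref{lem:domainincluded} supply the holomorphic extension of $\psi_1$ to an open neighbourhood of $\R$, and \eqref{eq:functional_eq_green} is propagated to this neighbourhood by analytic continuation of both sides. A brief verification of the identity $\Theta_1^-(\overline{\theta_2})=\Theta_1^-(\theta_2)$ on $\R$, together with the reality of $\Theta_1^-(\theta_2)$ there, closes the argument.
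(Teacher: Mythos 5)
Your proof is correct and follows essentially the same route as the paper: the regularity in (i) comes from Lemmas \ref{lem:continuation_BM} and \ref{lem:domainincluded} together with formulas \eqref{eq:defpsi} and \eqref{eq:prolongement}, and (ii) is obtained by evaluating the functional equation at the two conjugate zeros of the kernel lying over each $\theta_1\in(-\infty,\theta_1^-)$ and eliminating $\psi_2$. Your parametrization by $\theta_2\in\R$ instead of by $\theta_1$ is only cosmetic, and your extra care in justifying the substitution by analytic continuation and in detailing the decay at infinity merely fills in steps the paper leaves implicit.
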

This BVP is said to be non-homogeneous because of the function %$G$ but also due to 
$g$ coming from the term $e^{\theta \cdot x}$ in the functional equation.

\begin{proof}
The analytic and continuous properties of item \ref{BVPi} \rems{follow} from Lemma \ref{lem:continuation_BM} and Lemma~\ref{lem:domainincluded}. The behaviour at infinity \rems{follows} from the integral formula \eqref{eq:defpsi} which defines the Laplace transform $\psi_1$ and from the continuation formula \eqref{eq:prolongement}. % when the real part of $\theta_2$ is negative.
We now show item \ref{BVPii}. For $\theta_1\in (-\infty,\theta_1^-)$ let us evaluate the functional equation \eqref{eq:functional_eq_green} at the points $(\theta_1,\Theta_2^\pm(\theta_1)$. It yields the two equations
$$
0 = \gamma_1 (\theta_1,\Theta_2^\pm(\theta_1)) \psi_1 (\Theta_2^\pm(\theta_1)) + \gamma_2 (\theta_1,\Theta_2^\pm(\theta_1)) \psi_2 (\theta_1) + e^{(\theta_1,\Theta_2^\pm(\theta_1)) \cdot x}
.
$$
Eliminating $\psi_2 (\theta_1)$ from the two equations gives
\begin{align*}
 \psi_1 (\Theta_2^+(\theta_1))
 =
 &\frac{\gamma_1}{\gamma_2}
 (\theta_1,\Theta_2^-(\theta_1))
  \frac{\gamma_2}{\gamma_1}
  (\theta_1,\Theta_2^+(\theta_1))
  \psi_1 (\Theta_2^-(\theta_1))
  \\ &+
  \frac{\gamma_2}{\gamma_1}
  (\theta_1,\Theta_2^+(\theta_1))
\frac{e^{(\theta_1,\Theta_2^-(\theta_1)) \cdot x}}{\gamma_2(\theta_1,\Theta_2^-(\theta_1))}
-
\frac{e^{(\theta_1,\Theta_2^+(\theta_1) \cdot x}}{\gamma_1(\theta_1,\Theta_2^+(\theta_1))}.
\end{align*}
Choosing $\theta_1\in (-\infty,\theta_1^-)$, the quantities $\Theta_2^+(\theta_1)$ and $\Theta_2^-(\theta_1)$ go through the whole curve $\R$ (defined in \eqref{eq:curve_definition1}) and are complex conjugate, see Section \ref{subsec:noyau}. Noticing in that case that $\Theta_1^-(\Theta_2^-(\theta_1))=\theta_1$, we obtain equation \eqref{eq:boundary_condition_general1}. 
\end{proof}
%See \cite{franceschi_tuttes_2016} for the proof.

\subsection{Conformal \rems{glueing} function}
\label{subsec:collage}

To solve the BVP of Lemma \ref{lem:BVP} we need a function $w$
which satisfies the following conditions:
\begin{enumerate}[label={\rm (\roman{*})},ref={\rm (\roman{*})}]
     \item\label{item:conformal_10} $w$ is holomorphic on $\G$, continuous on $\overline{\G}$ and \rems{tends} to infinity at infinity,
     \item\label{item:conformal_20} $w$ is one to one from $\G$ to $\mathbb{C}\setminus (-\infty ,-1]$, 
     %{\rm(}vers $\mathbb{C}\setminus(-i\infty,-i]${\rm)},
     \item\label{item:conformal_30} $w(\theta_2)=w(\overline{\theta_2})$ for all $\theta_2\in\mathcal{R}$.
\end{enumerate}
Such a function $w$ is called a \textit{conformal \rems{glueing} function} because it glues together the upper and the lower part of the hyperbola $\R$. 
%La propriété conforme vient de~\ref{item:conformal_10} et~\ref{item:conformal_20}, et le collage vient de~\ref{item:conformal_30}.
Let us define $w$ in terms of generalized Chebyshev polynomial 
\begin{align}
\notag
T_a(x)  &:=\cos (a\arccos (x))=\frac{1}{2} \rems{\big[} \big(x+\sqrt{x^2-1}\big)^a+\big(x-\sqrt{x^2-1}\big)^a \rems{\big]},
\\
\beta &:=\arccos \rems{{
    \left( -\frac{\sigma_{12}}{\sqrt{\sigma_{11}\sigma_{22}}}
\right)    
    }},
     \label{eq:def_beta}
\\
\label{eq:expression_CGF_BM1}
     {w} (\theta_2) &:=T_{\frac{\pi}{\beta}}\bigg(-\frac{2\theta_2-(\theta_2^++\theta_2^-)}{\theta_2^+-\theta_2^-}\bigg)
     \text{for all } \theta_2\in\mathbb{C}\setminus [\theta_2^+,\infty).
\end{align}

The function $w$ is a conformal \rems{glueing} function which satisfies \ref{item:conformal_10}, \ref{item:conformal_20}, \ref{item:conformal_30} and $w(\Theta_2^\pm (\theta_1^-))=-1$.
See \cite[Lemma 3.4]{franceschi_tuttes_2016} for the proof of these properties. The following lemma is a direct consequence of these properties.
\begin{lem}[Conformal \rems{glueing} function]
The function $W$ defined by 
$$
W(\theta_2)=\frac{w(\theta_2)+1}{w(\theta_2)}
$$
satisfies the following properties :
\begin{enumerate}
     \item\label{item:conformal_1} $W$ is holomorphic on $\G\setminus \{w^{-1}(0)\}$, continuous on $\overline{\G}\setminus \{w^{-1}(0)\}$ and \rems{tends} to $1$ at infinity,
     \item\label{item:conformal_2} $W$ is one to one from $\G\setminus \{w^{-1}(0)\}$ to $\mathbb{C}\setminus [0,1]$, 
     \item\label{item:conformal_3} $W(\theta_2)=W(\overline{\theta_2})$ for all $\theta_2\in\mathcal{R}$.
\end{enumerate}
\end{lem}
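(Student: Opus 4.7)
The plan is to observe that $W = 1 + 1/w$ is the composition of $w$ with the Möbius transformation $M(z) := (z+1)/z$. Since the three listed properties of $W$ are each structural properties (holomorphy/continuity, injectivity, symmetry under conjugation), they should all be derivable directly from the analogous properties \ref{item:conformal_10}, \ref{item:conformal_20}, \ref{item:conformal_30} of $w$, together with elementary facts about $M$. There should be no analytical difficulty: the only thing requiring care is bookkeeping for the pole of $M$ at $0$.

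For item (1), I would argue as follows. The Möbius map $M$ is holomorphic on $\mathbb{C} \setminus \{0\}$, so $W$ is holomorphic on $\G \setminus w^{-1}(\{0\})$; note that $w^{-1}(\{0\})$ consists of a single point since $w$ is a bijection by \ref{item:conformal_20} and $0 \in \mathbb{C}\setminus(-\infty,-1]$. Continuity on $\overline{\G}\setminus w^{-1}(\{0\})$ follows similarly from the continuity of $w$. Since $w(\theta_2) \to \infty$ as $\theta_2\to\infty$ (by \ref{item:conformal_10}) and $M(z) \to 1$ as $z\to\infty$, we obtain $W(\theta_2)\to 1$.

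For item (2), the point is to determine the image of $\mathbb{C}\setminus(-\infty,-1]$ under $M$. The map $M$ is a Möbius involution (up to composition with $z\mapsto z-1$ twice): it sends $-1\mapsto 0$, $0\mapsto\infty$, $\infty\mapsto 1$, and its inverse is $u\mapsto 1/(u-1)$. For $z\in(-\infty,-1]$ one has $1/z\in[-1,0)$, hence $M(z)=1+1/z\in[0,1)$; conversely, $M^{-1}([0,1)) = (-\infty,-1]$ and $M^{-1}(\{1\}) = \{\infty\}$. Thus $M$ maps $\mathbb{C}\setminus(-\infty,-1]$ bijectively onto $(\mathbb{C}\cup\{\infty\})\setminus[0,1)$, and after removing the pole $z=0$ (which maps to $\infty$) we get a bijection onto $\mathbb{C}\setminus[0,1]$. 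Composing with the bijection $w\colon \G \to \mathbb{C}\setminus(-\infty,-1]$ from \ref{item:conformal_20} yields the claim.

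Item (3) is immediate: for $\theta_2\in\R$, applying $M$ to both sides of the equality $w(\theta_2)=w(\overline{\theta_2})$ from \ref{item:conformal_30} gives $W(\theta_2)=W(\overline{\theta_2})$. There is no genuine obstacle here; the lemma is essentially a verification that passing to the Möbius transform $1 + 1/w$ trades the cut $(-\infty,-1]$ for the interval $[0,1]$ at the cost of introducing one additional singularity, namely the unique preimage of $0$ under $w$.
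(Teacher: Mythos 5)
Your proof is correct and takes essentially the same route as the paper, which simply declares the lemma ``a direct consequence'' of properties \ref{item:conformal_10}--\ref{item:conformal_30} of $w$ — namely, composition with the Möbius map $z\mapsto 1+1/z$ exactly as you spell out. (One negligible slip: the intermediate image of $\mathbb{C}\setminus(-\infty,-1]$ under $M$ is $(\mathbb{C}\cup\{\infty\})\setminus[0,1]$ rather than $\setminus[0,1)$, since $1=M(\infty)$ is also excluded; your final conclusion $\mathbb{C}\setminus[0,1]$ after removing the pole at $w^{-1}(0)$ is nevertheless correct.)
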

We introduce $W$ to avoid any technical problem at infinity. We have a cut on the segment $[0,1]$ and we will be able to apply the propositions presented in Appendix \ref{appendix:BVP}.
Notice that we have chosen 
arbitrarily the pole of $W$ in $w^{-1}(0)$, but every other point $w^{-1}(x)$ for $x\in\mathbb{C}\setminus (-\infty,-1]$ would have been suitable.

%\begin{proof}[Idée de preuve]
%Nous verrons dans le chapitre \ref{chapitretuttelabel} qu'il y a au moins deux manières de trouver un tel $w$. Tout d'abord, 
%il s'avère qu'il existe dans la littérature des expressions de collages conformes pour les courbes relativement simples comme les hyperboles, voir \citet[Equation (4.6)]{baccelli_analysis_1987}. Ici (comme on le verra dans le chapitre \ref{chapitretuttelabel}), on utilise la sphère de Riemann $\s$. En effet, de nombreux aspects techniques s'avèrent être plus simples sur cette surface. 
%La fonction de collage $w$ peut donc se définir sur $\s$ et son expression est plus simple en utilisant la paramétrisation de $\s$ introduite en \ref{subsubsec:riemannsurface}. On trouve dans la formule \eqref{eq:expression_W} du chapitre \ref{chapitretuttelabel} que
%\begin{equation}
%\label{eq:expression_W0}
%     w(\theta_2(s))=-\frac{1}{2}\big\{(-s)^{\frac{\pi}{\beta}}+(-s)^{-\frac{\pi}{\beta}}\big\}=-\frac{1}{2}\big\{e^{\frac{\pi}{\beta}\log (-s)}+e^{-\frac{\pi}{\beta}\log (-s)}\big\}.
%\end{equation}
%%où l'on désigne par $\log$ la détermination principale du logarithme.
%\end{proof}

\subsection{Index of the BVP}
\label{subsec:index_BVP}

We denote \remst{Comment to the referee: $H$ is defined in the appendix B, but in fact there is no need to use it here so I remove it}
$$
\Delta=[\text{arg } G ]_{\mathcal{R}^-}  \remstf{=[\text{arg } H ]_0^1}
\quad \text{and} \quad
d=\text{arg } G (\Theta_2^\pm (\theta_1^-))\remstf{=\text{arg} H(0)} \in (-\pi,\pi].
$$
To solve the BVP of Lemma \ref{lem:BVP} we need to compute the index $\chi$ which is defined by
$$
\chi=\left\lfloor \frac{d+\Delta}{2\pi} \right\rfloor .
$$
\begin{lem}[Index]
\label{lem:index}
The index $\chi$ is equal to
$$
\chi=
\begin{cases}
0 & \text{if } \gamma_1(\theta_1^-, \Theta_2^\pm (\theta_1^-))  \gamma_2(\theta_1^-, \Theta_2^\pm (\theta_1^-)) \leqslant 0,
\\ 
1 & \text{if } \gamma_1(\theta_1^-, \Theta_2^\pm (\theta_1^-))  \gamma_2(\theta_1^-, \Theta_2^\pm (\theta_1^-)) > 0.
\end{cases}
$$
\end{lem}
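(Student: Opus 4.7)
The plan is to determine the boundary value $d$ and the argument variation $\Delta$ separately, then combine them. For $d$: the endpoint $\theta_2^\circ := \Theta_2^\pm(\theta_1^-)$ is real, since it is the common value of the two branches at the branch point, equal to $-b(\theta_1^-)/(2a(\theta_1^-))$. Thus $\overline{\theta_2^\circ} = \theta_2^\circ$ and the two factors in the definition \eqref{eq:def:G} of $G$ are reciprocals of one another, giving $G(\theta_2^\circ) = 1$ and $d = 0$. Hence $\chi = \lfloor \Delta / (2\pi) \rfloor$, and it remains to evaluate $\Delta = [\arg G]_{\R^-}$.

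For the structure of $G$ on $\R$: since $\Theta_1^-$ takes real values on $\R$ (by the definition of $\R$) and $\gamma_1, \gamma_2$ have real coefficients, one has $\gamma_j(\Theta_1^-(\theta_2), \overline{\theta_2}) = \overline{\gamma_j(\Theta_1^-(\theta_2), \theta_2)}$ for $j = 1, 2$ and $\theta_2 \in \R$. This implies $|G| \equiv 1$ on $\R$ and
\[ \arg G(\theta_2) = 2\arg \gamma_1(\Theta_1^-(\theta_2), \theta_2) - 2\arg \gamma_2(\Theta_1^-(\theta_2), \theta_2) \pmod{2\pi}, \]
so that $\Delta = 2\bigl([\arg\gamma_1]_{\R^-} - [\arg\gamma_2]_{\R^-}\bigr)$, reducing the task to evaluating the variation of $\arg\gamma_j$ along $\R^-$.

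I parametrize $\R^-$ by $\theta_1 \in (-\infty, \theta_1^-]$ via $\theta_2 = \Theta_2^-(\theta_1)$, so that $\theta_1 \mapsto \gamma_j(\theta_1, \Theta_2^-(\theta_1))$ is a continuous curve in $\mathbb{C}$ starting at the real value $\gamma_j(\theta_1^-, \Theta_2^\pm(\theta_1^-))$ (of sign $s_j$) and escaping to infinity with asymptotic direction determined by the expansion $\Theta_2^-(\theta_1) \sim \sqrt{\sigma_{11}/\sigma_{22}}\, e^{i\beta}\,\theta_1$, derived as in the proof of Lemma \ref{lem:domainincluded}. The key geometric input is that these curves avoid zero on the parametrization interval: the intersections $\{\gamma_j = 0\} \cap \{\gamma = 0\}$ consist only of $(0,0)$ and of $\theta^*$ (resp.\ $\theta^{**}$), whose first coordinates are strictly greater than $\theta_1^-$ except in the degenerate situation where the intersection coincides with the branch point. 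Hence a continuous determination of $\arg\gamma_j$ on $\R^-$ exists.

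A careful tracking of the continuous argument along this path (which stays in the lower half-plane, since $\Im \Theta_2^-(\theta_1) < 0$) then shows that each variation $[\arg\gamma_j]_{\R^-}$ differs by a precise multiple of $\pi$ according to the sign $s_j$, while the asymptotic direction is the same in both cases. Multiplying by $2$ and summing, the quantity $\Delta/(2\pi)$ lies in $[0, 1)$ when $s_1 s_2 \leqslant 0$ (giving $\chi = 0$) and in $[1, 2)$ when $s_1 s_2 > 0$ (giving $\chi = 1$). The borderline case $s_j = 0$, which forces $\theta^* = (\theta_1^-, \Theta_2^\pm(\theta_1^-))$ or similarly $\theta^{**}$, is handled by a continuity argument and falls into the $\chi = 0$ regime. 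The main technical obstacle is this final bookkeeping of arguments, which requires careful attention to the signs of $r_{12}$, $r_{21}$ and the precise value of $\beta$; this step is in the same spirit as the index computations carried out for the recurrent case in \cite{baccelli_analysis_1987, franceschi_explicit_2017}.
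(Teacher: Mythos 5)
Your reduction is the right one, and it is in the same spirit as the argument the paper delegates to (Lemma~14 of \cite{franceschi_explicit_2017}): since $\Theta_1^-$ is real on $\R$ and $\gamma_1,\gamma_2$ have real coefficients, $|G|\equiv 1$ on $\R$ and $\arg G=2\arg\gamma_1(\Theta_1^-(\theta_2),\theta_2)-2\arg\gamma_2(\Theta_1^-(\theta_2),\theta_2)$, so everything comes down to tracking $\arg\gamma_j$ along $\theta_1\mapsto(\theta_1,\Theta_2^-(\theta_1))$, $\theta_1\in(-\infty,\theta_1^-]$. The non-vanishing of $\gamma_j$ along this path and the asymptotic direction $\Theta_2^-(\theta_1)\sim\sqrt{\sigma_{11}/\sigma_{22}}\,e^{i\beta}\theta_1$ are also correct. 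But the proof stops exactly where the content of the lemma begins. The sentence ``a careful tracking of the continuous argument then shows that $\Delta/(2\pi)$ lies in $[0,1)$ when $s_1s_2\leqslant 0$ and in $[1,2)$ when $s_1s_2>0$'' is the statement to be proved, not a step of the proof. Nothing in the proposal pins down the orientation of $\R^-$ (vertex to infinity versus the reverse changes the sign of $\Delta$ and hence the value of $\lfloor(d+\Delta)/(2\pi)\rfloor$ by an integer), and nothing shows how the terminal arguments $\arctan$-type quantities in $r_{12}$, $r_{21}$ and $\beta$ combine so that $(d+\Delta)/(2\pi)$ lands in the correct unit interval; in fact the existence condition \eqref{eq:existence} (e.g.\ $r_{12}r_{21}<1$ when both are negative) is needed to keep $\Delta/(2\pi)$ below $2$ in the case $s_1>0$, $s_2>0$, and your sketch never invokes it. Since each $[\arg\gamma_j]_{\R^-}$ is an angle of the form ``asymptotic direction minus $0$ or $\mp\pi$'' depending on $\operatorname{sgn}(s_j)$ and $\operatorname{sgn}(r_{ij})$, the floor can easily come out as $-1$ or $0$ instead of $0$ or $1$ under a wrong convention; this bookkeeping must actually be displayed.

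A second, more localized gap: the claim $d=0$ is only valid when $\gamma_1$ and $\gamma_2$ are both nonzero at the vertex $(\theta_1^-,\Theta_2^\pm(\theta_1^-))$. In the borderline case $s_j=0$ (which the lemma places in the $\chi=0$ branch), the curve $\gamma_j(\theta_1,\Theta_2^-(\theta_1))$ approaches $0$ along the purely imaginary direction (because $\Theta_2^-$ leaves the vertex vertically), so $\gamma_j/\overline{\gamma_j}\to -1$ and $G$ tends to $-1$, not $1$; hence $d=\pi$, not $0$. You cannot then settle this case ``by a continuity argument'': $\chi$ is integer-valued and jumps from $1$ to $0$ precisely as $s_1s_2$ crosses $0$, so continuity cannot decide on which side the boundary configuration falls --- both $d$ and $\Delta$ must be recomputed there. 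As written, the proposal is a correct plan with the decisive computation missing, which is a genuine gap rather than a complete proof.
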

The index is then equal to $0$ or $1$ depending on the position of the two straight lines $\gamma_1=0$ and $\gamma_2=0$  with respect to the red point $(\theta_1^-, \Theta_2^\pm (\theta_1^-)) $. See Figure \ref{fig:chi} which illustrates this lemma. 
\begin{proof}
The proof is similar in each step to the proof of Lemma 14 in \cite{franceschi_explicit_2017} except that in our case $ \gamma_2(\theta_1^-, \Theta_2^\pm (\theta_1^-))$ is not always positive.
\end{proof}

\begin{figure}[hbtp]
\centering
\includegraphics[scale=0.6]{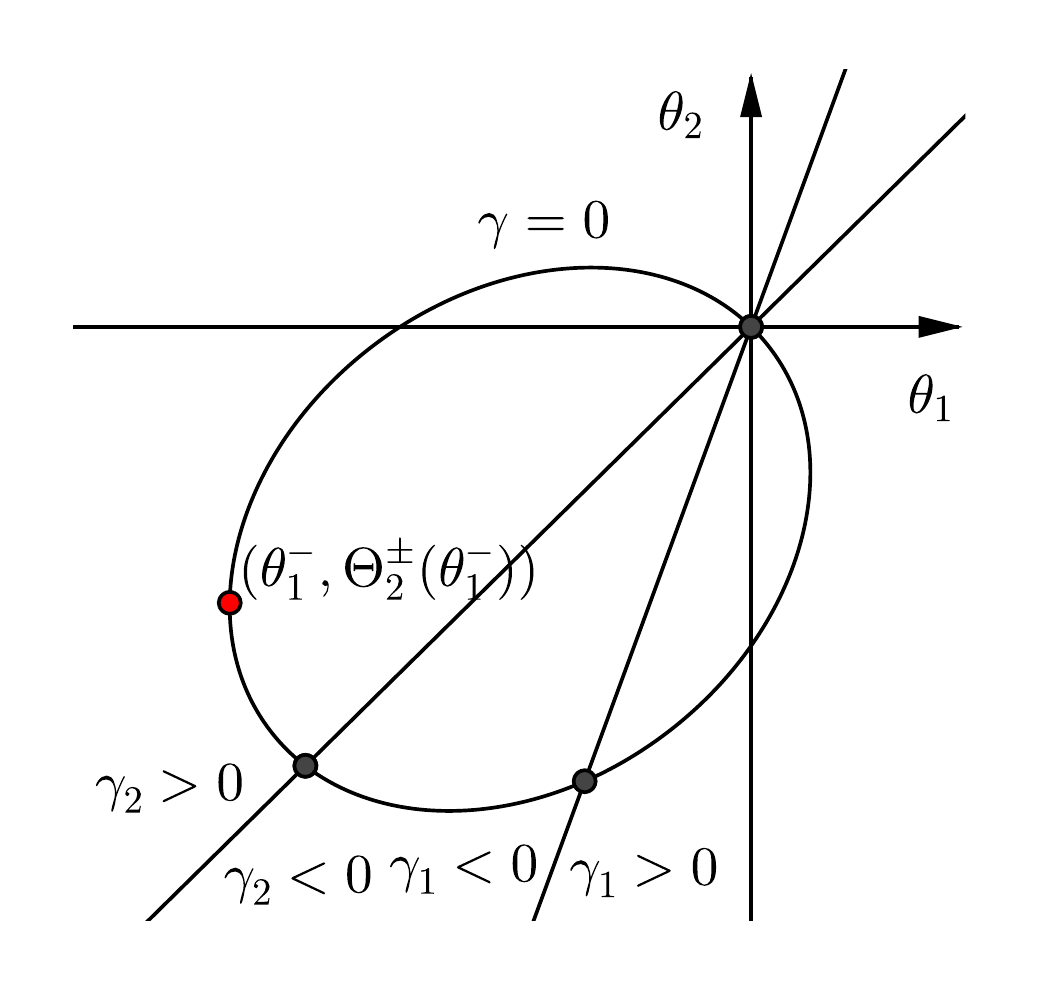}
\includegraphics[scale=0.6]{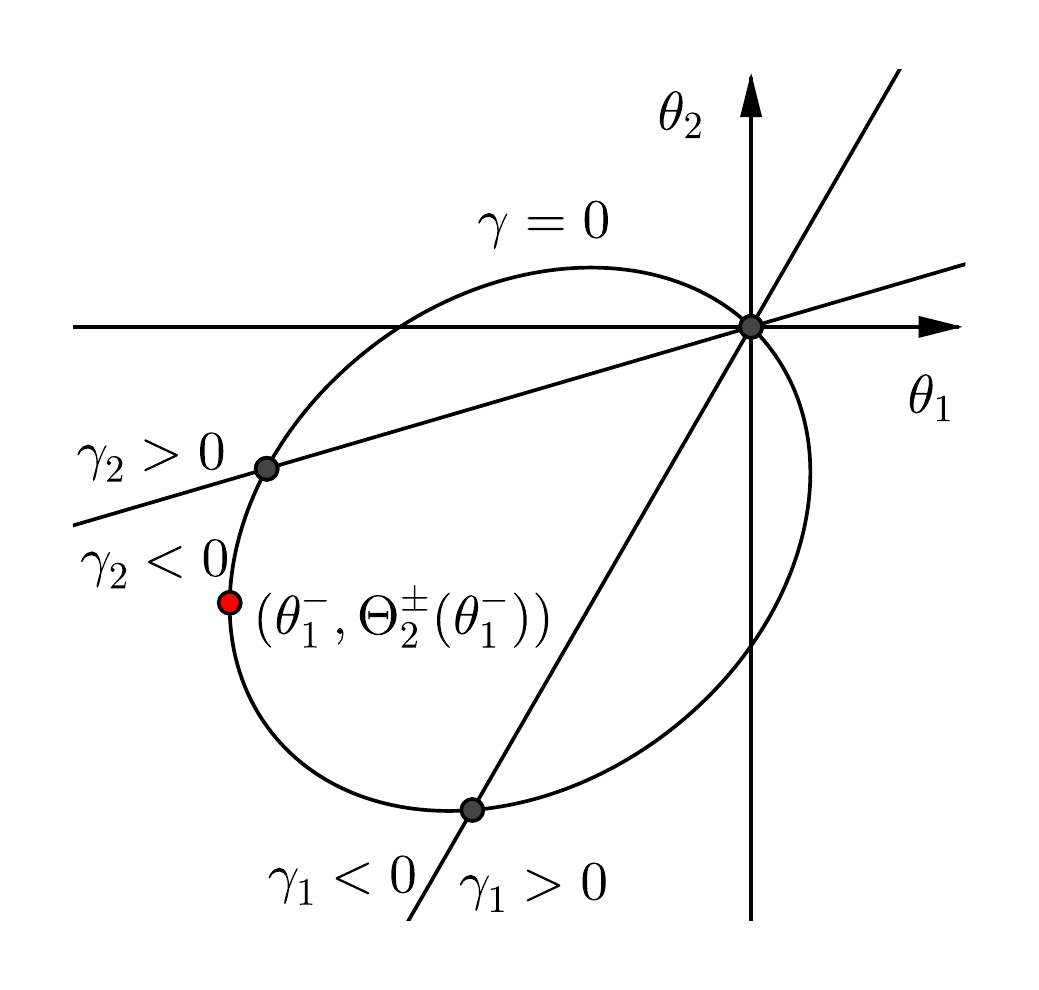}
\caption{On the left $\chi=0$ and on the right $\chi=1$}
\label{fig:chi}
\end{figure}

\subsection{Resolution of the BVP}
\label{subsec:resolution_BVP}

The following theorem, already presented in the introduction as the main result of this paper, holds.
\begin{thm}[Explicit expression of $\psi_1$]
\label{thm:main}
Assume conditions \eqref{eq:transient_condition} and \eqref{eq:drift_positive_condition}. For $\theta_2\in\G$, the Laplace transform $\psi_1$ defined in \eqref{eq:defpsi} is equal to
%\begin{equation}
%\psi_1(z)= X(w(z))(\phi(w(z)-(\chi+1)\phi(1))
%\end{equation} 
%\begin{multline} 
\begin{equation}
\psi_1(\theta_2)=
%w(z)^{\chi} 
%\exp  \left(
%\frac{1}{2i\pi} \int_{\frac{1}{2}\mathcal{L}} \log(G(t))\frac{ w'(t)}{w(t)-w(z)}- \frac{ w'(t)}{w(t)} \mathrm{d}t
%\right) 
%\times
%\\
%\left(
\frac{-Y (w(\theta_2))}{2i\pi} \int_{\R^-} \frac{g(t)}{Y^+ (w(t))}
\left(
 \frac{w'(t) }{w(t)-w(\theta_2)}
 + \chi
  \frac{ w'(t) }{w(t)} 
  \right)
 \,  \mathrm{d}t
 %\right)
 \label{eq:main}
\end{equation}
%\end{multline}
with 
$$
Y (w(\theta_2))=w(\theta_2)^{\chi} 
\exp  \left(
\frac{1}{2i\pi} \int_{\R^-} \log(G(s))\left(\frac{ w'(s)}{w(s)-w(\theta_2)}- \frac{ w'(s)}{w(s)} \right) \, \mathrm{d}s
\right),
$$
and where 
\begin{itemize}
\item $G$ is defined in \eqref{eq:def:G} and $g$ in \eqref{eq:def:g},
\item $w$ is the conformal \rems{glueing} function defined in \eqref{eq:expression_CGF_BM1},
\item$\R^-$ is the part of the hyperbola $\R$ defined in \eqref{eq:curve_definition1} with negative imaginary part,
\item $\chi=0$ or $1$ is determined by Lemma \ref{lem:index},
\item $Y^+$ is the limit value on $\R^-$ of $Y$ %defined in $\G$ 
(and may be expressed thanks to Sokhotski-Plemelj formulas stated in Proposition \ref{prop:sokhotski} of Appendix \ref{appendix:BVP}).
\end{itemize}
\end{thm}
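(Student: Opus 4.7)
The plan is to solve the non-homogeneous Carleman BVP of Lemma~\ref{lem:BVP} by the classical two-step factorization technique: first reduce it to a Riemann--Hilbert jump problem on a cut via the conformal glueing function $w$, then factor out the homogeneous solution $Y$, and finally solve the remaining additive jump problem by a Cauchy integral.

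\textbf{Step 1 (Reduction to a jump problem).} Because $w$ is univalent on $\G$ and satisfies $w(\theta_2)=w(\overline{\theta_2})$ for $\theta_2\in\R$, its inverse maps the cut plane $\mathbb{C}\setminus(-\infty,-1]$ bijectively onto $\G$, sending the two sides of the cut to the two halves $\R^+,\R^-$ of the hyperbola, which are complex conjugate to each other. Setting $F:=\psi_1\circ w^{-1}$, the shift condition~\eqref{eq:boundary_condition_general1} becomes a multiplicative jump condition $F^+=\widetilde G\,F^-+\widetilde g$ across the cut. In practice one works with the bounded cut $[0,1]$ via $W=(w+1)/w$ of Section~\ref{subsec:collage} to avoid technical issues at infinity.

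\textbf{Step 2 (Homogeneous factorization).} Solve the associated homogeneous problem $Y^+=\widetilde G\,Y^-$ on the cut. Taking logarithms requires $\log\widetilde G$ to be single-valued along $\R^-$; the obstruction is measured precisely by the index $\chi$ from Lemma~\ref{lem:index}. Pulling out a factor $w^\chi$ absorbs the winding, and the Sokhotski--Plemelj formulas (Proposition~\ref{prop:sokhotski}) yield the canonical solution
\begin{equation*}
Y(w(\theta_2))=w(\theta_2)^{\chi}\exp\!\left(\frac{1}{2i\pi}\int_{\R^-}\log G(s)\left(\frac{w'(s)}{w(s)-w(\theta_2)}-\frac{w'(s)}{w(s)}\right)\mathrm{d}s\right),
\end{equation*}
where the subtracted kernel $w'(s)/w(s)$ is the standard normalization that controls the behaviour at infinity.

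\textbf{Step 3 (Non-homogeneous part and finish).} Write $\psi_1=Y\cdot\Phi$ and divide the full boundary condition by $Y^+$; using $Y^+=\widetilde G\,Y^-$, the shift condition for $\psi_1$ collapses into an additive jump $\Phi^+-\Phi^-=g/Y^+$ on $\R^-$. A second application of Sokhotski--Plemelj gives $\Phi$ as a Cauchy-type integral. When $\chi=1$, $Y$ grows like $w$ at infinity, so the Cauchy kernel must be regularized by the extra term $\chi\,w'(t)/w(t)$ so that the product $\psi_1=Y\Phi$ still satisfies the decay condition at infinity required by item~\ref{BVPi} of Lemma~\ref{lem:BVP}. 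Recombining produces exactly formula~\eqref{eq:main}. One then checks that the formula does satisfy items~\ref{BVPi} and~\ref{BVPii}, and uniqueness follows since any two solutions differ by a function obeying the homogeneous problem and vanishing at infinity, which must be identically zero by the index count.

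\textbf{Main obstacle.} The technical heart is the bookkeeping imposed by the index $\chi$: choosing the correct power of $w$ to remove the multi-valuedness of $\log G$, selecting the right subtractions in the Cauchy kernels so that every integral converges on the unbounded arc $\R^-$ and gives the prescribed decay at infinity, and matching boundary behaviour at the endpoint $\Theta_2^\pm(\theta_1^-)$ of $\R$ (where $w=-1$). The strategy parallels the recurrent case of~\cite{franceschi_explicit_2017}; the genuinely new difficulty is the non-trivial inhomogeneity $g$ induced by the starting-point term $e^{\theta\cdot x}$ of~\eqref{eq:eqfunctintro}, which is responsible for the second Cauchy integral in~\eqref{eq:main}.
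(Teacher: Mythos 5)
Your proposal is correct and follows essentially the same route as the paper: transport the Carleman problem to a Riemann BVP on a cut via the gluing function $W=(w+1)/w$, factor out the canonical homogeneous solution $Y$ with the index $\chi$ of Lemma~\ref{lem:index}, and solve the resulting additive jump by a Cauchy integral, fixing the residual freedom through the decay of $\psi_1$ at infinity. The only cosmetic difference is that the paper obtains the extra term $\chi\,w'(t)/w(t)$ as the constant $C=-\phi(1)$ coming from the polynomial degree of freedom $P_\chi$ in the general solution, determined by $\psi_1(-\infty)=0$, rather than as an a priori regularization of the Cauchy kernel; the two viewpoints are equivalent.
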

\begin{proof}
We define the function $\Psi$ by 
$$
\Psi(z)=\psi_1(W^{-1}(z)), \quad \text{for } z\in\mathbb{C}\setminus[0,1].
$$
Then $\Psi$ satisfies the Riemann BVP of Proposition \ref{C:prop:pbmriemannassocie} in Appendix \ref{appendix:BVP}. The resolution of this BVP leads to Proposition \ref{C:prop_solution_carleman} which gives a formula for the Laplace transform $\psi_1 = \Psi \circ W$. We then have
$$
\psi_1(\theta_2)=
\begin{cases}
X(W(\theta_2))\phi(W(\theta_2)) & \text{for } \chi=-1,
\\
 X(W(\theta_2))(\phi(W(\theta_2))+C) & \text{for } \chi=0,
 \end{cases}
$$
where $C$ is a constant, 
$\chi$ is determined in Lemma \ref{lem:index} and the functions $X$ and $\phi$ are defined by %Noting
$$
X(W(\theta_2)):= (W(\theta_2)-1)^{-\chi} 
\exp  \left(
\frac{1}{2i\pi} \int_{\R^-} \log(G(t))\frac{ W'(t)}{W(t)-W(\theta_2)} \, \mathrm{d}t
\right), \quad  \theta_2\in\G,
$$
and
$$
\phi (W(\theta_2)):= \frac{-1}{2i\pi} \int_{\R^-} \frac{g(t)}{X^+ (W(t))} \frac{W'(t) }{W(t)-W(\theta_2)} \, \mathrm{d}t, \quad  \theta_2\in\G.
$$
When $\chi=0$ the constant is determined evaluating $\psi_1$ at $-\infty$. We have $\psi_1(-\infty)=0$, $W(-\infty)=0$ %and then $0=X(1)(\phi(1)+C)$. 
and we obtain $C=-\phi(1)=\frac{1}{2i\pi} \int_{\R^-} \frac{g(t)}{X^+ (W(t))} \frac{W'(t) }{W(t)-1} \, \mathrm{d}t$.
To end the proof we just have to notice that $$W(\theta_2)-1=\frac{1}{w(\theta_2)},
\quad 
\frac{W'(t)}{W(t)-W(\theta_2)}=
\frac{w'(t)}{w(t)-w(\theta_2)}
-
\frac{w'(t)}{w(t)}
\quad \text{and}
\quad
\frac{W'(t)}{W(t)-1}=
-
\frac{w'(t)}{w(t)}.
$$
\end{proof}

\subsection{Decoupling functions}
\label{subsec:decoupling_functions}

Due to the function $G\neq1$ in \eqref{eq:boundary_condition_general1}, the boundary value problem is complex. When it is possible to reduce the BVP to the case where $G=1$, it is then possible to solve it directly thanks to Sokhotski-Plemelj formulas, see Remark \ref{C:rem:sechol} in Appendix \ref{appendix:BVP}. 

In some specific cases it is possible to find a rational function $F$ satisfying the decoupling condition
\begin{equation}
\label{eq:def:decouplage_F}
G(\theta_2) = \frac{F(\theta_2)}{F(\overline{\theta_2})}, \quad \forall\theta_2\in\mathcal{R},
\end{equation}
where $G$ is defined in \eqref{eq:def:G}. Such a function $F$ is called a decoupling function. In \cite{BoElFrHaRa_algebraic_2018} the authors show that such a function exists if and only if the following condition holds
\begin{equation}
\label{eq:condition_decouplage}
\epsilon+\delta\in \beta\mathbb{Z}+\pi\mathbb{Z},
\end{equation}
where $\beta$ is defined in \eqref{eq:def_beta} and
$\epsilon,\delta\in(0,\pi)$ are defined by
\begin{equation*}
\label{eq:expression_delta_epsilon}
     \tan\epsilon=
     \frac{\sin\beta}{r_{21}\sqrt{\frac{\sigma_{11}}{\sigma_{22}}}+\cos\beta}
     \quad
\text{and}
\quad
     \tan\delta=
     \frac{\sin\beta}{r_{12}\sqrt{\frac{\sigma_{22}}{\sigma_{11}}}+\cos\beta}.
\end{equation*}
In this case it is possible to solve in an easier way the boundary value problem. The boundary condition \eqref{eq:boundary_condition_general1} may be rewritten as
$$
(F\psi_1)(\overline{\theta_2})=(F\psi_1)({\theta_2}) + F(\overline{\theta_2})g(\theta_2), \qquad \forall \theta_2\in \R.
$$
Using again the conformal \rems{glueing} function $w$,
we transform the BVP into a Riemann BVP, see Appendix \ref{appendix:BVP}.
Such an approach leads to an alternative formula for $\psi_1$ which is simpler.
Indeed, thanks to Remark \ref{C:rem:sechol}, %assuming that 
in the cases where the rational fraction $F$ \rems{tends} to $0$ at infinity, 
we obtain % it leads to
$$
\psi_1({\theta_2})
=
\frac{1}{2i\pi}
\frac{1}{F(\theta_2)}
\int_{\mathcal{R}^-}
\frac{F(\overline{t})g(t)}{w(t)-w(\theta_2)} \, \mathrm{d}t, \quad \theta_2\in\G.
$$

%Faire en exemple le cas orthogonal ou skew symmétrie

%\begin{remarque}[\textit{Skew} symétrie et nature algébrique]
%Cette formule explicite permet, dans la section \ref{sec:algebraic_nature} du chapitre \ref{chapitreexplicitlabel}, de redémontrer des résultats classiques, voir \citet{harrison_multidimensional_1987}, sur la \textit{skew} symétrie qui caractérise les cas où la mesure invariante a une forme produit. Cela nous permet aussi d'énoncer quelques résultats en lien avec la nature algébrique des transformées de Laplace et avec le résultat de \citet{dieker_reflected_2009} qui caractérise les cas où la densité stationnaire est une somme d'exponentielles.
%\end{remarque}

\appendix

\section{Potential theory}
\label{appendix:potentialtheory}

There have not been many studies to determine explicit expressions for Green's functions of diffusions. In order to make the article self-contained and give context,
in this appendix we illustrate in an informal way the links between partial differential equations and Green's functions of Markov processes in potential theory. %For an introduction do PDE see for example  Peter V O Neil PDE, intro to PDE, G. Folland 95 chap 3 and \citet{hunt_56_brownianmotion} for its links to Brownian motion.

\subsection{Dirichlet boundary condition and killed process}
Let $\Omega$ be an open, bounded, smooth subset of $\mathbb{R}^d$ and $X$ an homogeneous diffusion of generator $\mathcal{L}$ starting from $x$ and killed at the boundary $\partial\Omega$. Assume that $X$ admits a \rems{transition density} $p_t(x,y)$ and \rems{denote by} $g(x,y)$ the Green's function defined by 
$$g(x,y)=%\mathbb{E}_x 
\int_0^{\infty} p_t(x,y) \, \mathrm{d} t.$$
The forward Kolmogorov equation (or Fokker-Planck equation) with boundary and initial condition says that 
$$
\begin{cases}
\mathcal{L}^*_y p_t(x,y) = \partial_t p_t(x,y), 
\\
p_t(x,\cdot)=0 \text{ on } \partial\Omega,
\\
p_0 (x,\cdot) = \delta_x.
\end{cases} 
$$
Integrating this equation in time we can see that Green's function is a fundamental solution of the dual operator $L^*$ and satisfies
%$$L^*_y g (x,\cdot)= \delta_x .$$
$$
\begin{cases}
\mathcal{L}^*_y g (x,\cdot)= -\delta_x & \text{in } \Omega,
\\
g (x,\cdot) = 0 & \text{on } \partial \Omega.
\end{cases} 
$$
Now, let $f$ be a continuous function on $\overline{\Omega}$ and $\phi$ a continuous function on $\partial \Omega$. If we assume that the equation
%For example assume that $L$ is a hypoelliptic operator equal to $L=$,
$$
\begin{cases}
\mathcal{L} u = -f & \text{in } \Omega,
\\
u = \phi & \text{on } \partial\Omega,
\end{cases}
$$
admits a unique solution, it is possible to express it in terms of \remst{the }Green's functions. We have
$$
u(x)= \mathbb{E}_x \left[\int_0^\tau  f({\rems{X(t)}}) \,  \mathrm{d}t\right]  + \mathbb{E}_x \left[ \phi(X_\tau) \right] 
=\int_\Omega f(y) g(x,y)\, \mathrm{d}y
+\int_{\partial \Omega} \phi(y) \partial_{n_y}g(x,y) \, \rems{\mathrm{d}y},
$$
where $\tau$ is the first exit time of $\Omega$.
(Note that $\partial_{n_y}g$, the \textit{inner} % ça serait un plus sinon c'est la outer
normal derivative on the boundary of Green's function, may be interpreted as the density of the distribution of the exit place.)
Thanks to Green's functions 
%we have solved in an informal way
it is then possible to solve an interior Poisson's type equation with Dirichlet boundary conditions which %impose 
specify the value of $u$ on the boundary and the value of $\mathcal{L}u$ inside $\Omega$. 
%Henceforth, % désormais

\subsection{Neumann boundary condition and reflected process}
Henceforth, let us replace the interior Dirichlet problem by an exterior Neumann boundary problem which specifies %this time 
the value of the normal derivative of $u$ on the boundary and the value of $\mathcal{L}u$ outside $\Omega$ in $\Omega^c =\mathbb{R}^d \setminus \overline{\Omega}$ :
$$
\begin{cases}
\mathcal{L} u = -f & \text{in } \Omega^c,
\\
\partial_{n} u  = \phi & \text{on } \partial\Omega.
\end{cases}
$$
%$$
%\partial_{n} u = \phi \quad \text{on } \partial\Omega.
%$$
While the Dirichlet equation was linked to  %the corresponding 
some killed process on the boundary, the Neumann equation is linked to 
a reflected process.
%the corresponding process \textit{reflected on the boundary}. 
From now, let us denote $X$ the reflected process on $\partial\Omega$ of generator $L$ inside $\Omega^c$. Let us recall that $\Omega^c$ is unbounded, we assume that the process is transient and we note $g$ its Green's function. This time again, $g$ is a fundamental solution of $\mathcal{L}^*$ (with a more complex boundary condition of Robin type linking $u$ and $\partial_n u$). There are some necessary compatibility conditions linking $f$ and $\phi$ in order for a solution to exist, for example if $\mathcal{L}=\Delta$ the interior Neumann boundary problem can have a solution only if $\int_\Omega f =-\int_{\partial\Omega} \phi$. The solution vanishing at infinity of the Neumann problem, if it exists, is equal to 
%up to a constant (if we assume that $u$ converge at infinity)
$$
u(x)=
\mathbb{E}_x \left[\int_0^\infty f({\rems{X(t)}}) \, \mathrm{d}t\right]  + \mathbb{E}_x \left[\int_0^\infty \phi({\rems{X(t)}}) \, \mathrm{d} {\rems{L(t)}}\right] 
=\int_\Omega f(y) g(x,y)\, \mathrm{d}y
+\int_{\partial \Omega} \phi(y) h(x,y) \, \mathrm{d}s(y).
$$
We have noted $L$ the local time that the process spends on the boundary $\partial\Omega$ and $h$ the density of the boundary Green's measure $H$ which is equal to
$$
H(x,A)=\mathbb{E}_x \left[ \int_0^\infty \mathbf{1}_A ({\rems{X(t)}}) \,  \mathrm{d} {\rems{L(t)}} \right]
\quad \text{for } A\subset \partial \Omega
,
$$
and represents the average local time that the process spends on the set $A$ of the boundary. In fact $h$ and the restriction of $g$ to $\partial\Omega$ are intimately related, for example if $\mathcal{L}=\Delta$ then $h=g_{\mid_{\partial \Omega}}$. These formulas present, in an informal way, how to solve a Neumann boundary equation thanks to Green's functions. 
The Appendix \ref{appendix:dim1} illustrates this %these formula 
by giving an explicit example in one dimension in \eqref{eq:PDEdim1}. 

Unfortunately, %in general
finding Green's functions is often a difficult task. 
Notice that in this paper $\Omega^c= \mathbb{R}_+^2$ and $\Omega$ is therefore neither bounded 
nor smooth, \rems{and the reflection is oblique, rather than normal. This makes our task
in this article more complicated.}

\section{Carleman \rems{Boundary Value Problem}}
\label{appendix:BVP}

This appendix is a short presentation
% brief introduction 
of the boundary value problems (BVP) theory. It introduces methods and techniques %tools 
used for the resolution of BVP. The results presented here can be found in the reference books of \citet{litvinchuk_solvability_2000}, \citet{Mu-72} and \citet{gakhov_boundary_66}.

\subsection{Sokhotski-Plemelj formulae}

Sokhotski-Plemelj formulas are central in the resolution of Riemann boundary value problems. Let $\mathcal{L}$ a contour (open or closed) smooth and oriented and $f\in \mathbb{H}_\mu (\mathcal{L})$ the set of $\mu$-Hölder continuous functions on $\mathcal{L}$ for $0<\mu \leqslant 1$. A function is sectionally holomorphic if it is holomorphic on the whole complex plane \rems{except} $\mathcal{L}$ and admits right and left limits on $\mathcal{L}$ (except on its potential \rems{ends}).
\begin{figure}[hbtp]
\centering
\includegraphics[scale=1]{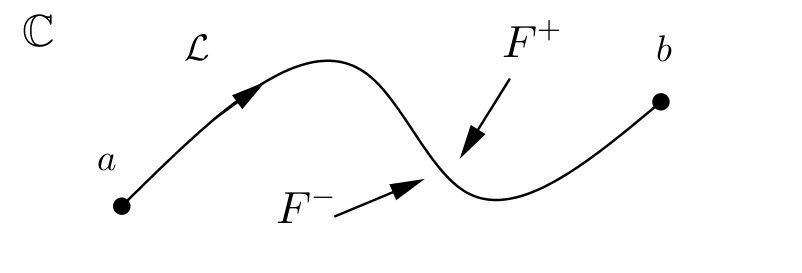}
\caption{An oriented smooth open contour $\mathcal{L}$ of \rems{ends} $a$ and $b$; right limit $F^-$ and left limit $F^+$ of $F$ on $\mathcal{L}$}
\end{figure}

\begin{prop}[Sokhotski-Plemelj formulae]
The function
$$
F (z) := \frac{1}{2i\pi}\int_{\mathcal{L}} \frac{f(t)}{t-z} \, \mathrm{d}t, \quad z\notin \mathcal{L}
$$
%est continue sur $\mathcal{L}$ par la gauche et par la droite à l'exception des extrémités. 
is sectionally holomorphic.
The functions $F^+$ and $F^-$ on $\mathcal{L}$ taking the limit values of $F$ respectively on the left and on the right satisfy for $t\in \mathcal{L}$ the formulas 
$$
F^+(t)= \frac{1}{2} f(t) +\frac{1}{2i\pi} \int_{\mathcal{L}} \frac{f(s)}{s-t} \, \mathrm{d}s
\quad \text{and} \quad
F^-(t)= -\frac{1}{2} f(t) +\frac{1}{2i\pi} \int_{\mathcal{L}} \frac{f(s)}{s-t} \, \mathrm{d}s.
$$
Theses formulas are equivalent to the equations
$$
F^+ (t)-F^- (t) =f(t) \quad\text{and}\quad F^+(t)+F^-(t)=\frac{1}{i\pi}
\int_{\mathcal{L}} \frac{f(s)}{s-t} \, \mathrm{d}s.
$$
\label{prop:sokhotski}
\end{prop}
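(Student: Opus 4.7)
The plan is to follow the classical two-step analysis: first show that $F$ is holomorphic off $\mathcal{L}$, then establish the jump relation $F^+(t)-F^-(t)=f(t)$ and the mean-value relation $F^+(t)+F^-(t)=\frac{1}{i\pi}\int_{\mathcal{L}}\frac{f(s)}{s-t}\,\mathrm{d}s$ (principal value) at every interior point $t\in\mathcal{L}$. The two desired formulas follow at once by adding and subtracting these relations.

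\emph{Sectional holomorphy.} For $z\notin\mathcal{L}$ the kernel $(t-z)^{-1}$ is continuous in $t\in\mathcal{L}$ and holomorphic in $z$, and $\partial_{\overline z}(t-z)^{-1}=0$; by standard differentiation under a Cauchy-type integral (or by Morera's theorem applied to small loops in $\mathbb{C}\setminus\mathcal{L}$), $F$ is holomorphic on $\mathbb{C}\setminus\mathcal{L}$. Existence of the one-sided limits $F^\pm(t)$ at interior points of $\mathcal{L}$ will be a byproduct of the jump analysis below.

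\emph{Jump relation — the key step.} Fix $t_0\in\mathcal{L}$ away from the endpoints. The decisive trick is to split
\[
F(z)\;=\;\frac{f(t_0)}{2i\pi}\int_{\mathcal{L}}\frac{\mathrm{d}t}{t-z}\;+\;\frac{1}{2i\pi}\int_{\mathcal{L}}\frac{f(t)-f(t_0)}{t-z}\,\mathrm{d}t\;=:\;A(z)+B(z).
\]
The first integral $A(z)$ is elementary: it equals $\frac{f(t_0)}{2i\pi}\bigl[\log(t-z)\bigr]_a^b$ along any branch of the logarithm, so its boundary behaviour is controlled by the well-known jump of $\log(t-z)$ across the oriented contour, namely $2i\pi$ from the right-hand side to the left-hand side. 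One concludes $A^+(t_0)-A^-(t_0)=f(t_0)$. The second integral $B(z)$ is the tricky one, and this is where the H\"older hypothesis $f\in\mathbb{H}_\mu(\mathcal{L})$ intervenes: on $\mathcal{L}$ the integrand is bounded by $C|t-t_0|^{\mu-1}$, which is locally integrable, and by a standard local parametrization of $\mathcal{L}$ near $t_0$ the resulting integral extends continuously to $t_0$ from \emph{both} sides of $\mathcal{L}$ with the same value. Thus $B^+(t_0)=B^-(t_0)$, and summing with $A$ yields $F^+(t_0)-F^-(t_0)=f(t_0)$.

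\emph{Mean-value (principal-value) relation.} Writing $F^+(t_0)+F^-(t_0)=2B(t_0)+A^+(t_0)+A^-(t_0)$, we identify $A^+(t_0)+A^-(t_0)$ as $\frac{f(t_0)}{i\pi}\,\mathrm{p.v.}\!\int_{\mathcal{L}}\frac{\mathrm{d}t}{t-t_0}$ (symmetric limit of the two log determinations) and recombine $2B(t_0)+\frac{f(t_0)}{i\pi}\,\mathrm{p.v.}\!\int_{\mathcal{L}}\frac{\mathrm{d}t}{t-t_0}=\frac{1}{i\pi}\,\mathrm{p.v.}\!\int_{\mathcal{L}}\frac{f(s)}{s-t_0}\,\mathrm{d}s$, where the principal value is well-defined precisely because the H\"older condition makes the singular integral convergent in Cauchy's sense. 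Adding and subtracting the jump and mean-value relations gives the two displayed formulas for $F^\pm(t)$.

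\emph{Main obstacle.} The delicate point is the continuity of $B(z)$ across $\mathcal{L}$ at $t_0$: one must justify that the difference $B(z)-B(t_0)$ tends to $0$ as $z\to t_0$ from either side, uniformly enough to exchange limit and integral. The standard way is to split $\mathcal{L}$ into a small neighbourhood of $t_0$ (where the H\"older estimate gives an integrable majorant independent of $z$) and its complement (where the kernel is continuous), and to apply dominated convergence on the small piece together with a direct limit on the complement. Endpoints of $\mathcal{L}$ require separate (and standard) care since the limit values $F^\pm$ may fail to exist there, which is why the proposition only asserts the formulas at interior points.
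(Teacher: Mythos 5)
Your proof is correct and is the classical argument: split off the constant $f(t_0)$ so that the singular part $A(z)$ reduces to a logarithm whose jump across the oriented contour is $2i\pi$, while the H\"older condition makes the remainder $B(z)$ continuous across $\mathcal{L}$ at interior points; the jump and mean-value relations then follow by adding and subtracting. The paper itself offers no proof of this proposition — it is quoted from the reference books of Gakhov and Muskhelishvili cited in the appendix — and your argument is exactly the one found there, including the correct identification of the only delicate step (two-sided continuity of $B$ at $t_0$ via the near/far splitting and the integrable majorant $C|t-t_0|^{\mu-1}$).
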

These integrals are understood in the sense of the principal value,  see \cite[Chap. 1,
Sect. 12]{gakhov_boundary_66}.
\begin{rem}[Sectionally holomorphic functions for a given discontinuity]
\label{C:rem:sechol}
Liouville's theorem shows that the function $F$ defined above is the 
unique sectionally holomorphic function $\rems{\Phi}$ satisfying the equation $$\Phi^+ (t)-\Phi^- (t) =f(t), \quad \forall t\in\mathcal{L}$$ and which vanishes at infinity. The solutions of this equation of finite degree at infinity are the functions such that $$\Phi=F+P$$ where $P$ is a polynomial.
\end{rem}
\begin{rem}[Behavior at the \rems{ends}]
\label{C:rem:etrem}
It is possible to show that if $\mathcal{L}$ is an oriented open contour from \rems{end} $a$ to \rems{end} $b$, then \rems{in the
neighborhood} of an \rems{end} $c$ it exists $F_c(z)$, an holomorphic function \rems{in the
neighborhood} of $c$, such that
\begin{equation}
F(z)=\frac{\epsilon_c}{2i\pi} f(c)\log (z-c) + F_c(z)
\quad\text{where}\quad
\epsilon_c=
\begin{cases}
-1 & \text{if } c=a,
\\
1 & \text{if } c=b.
\end{cases}
\end{equation}
%et $F_c(z)$ est une fonction holomorphe au voisinage de $c$.
\end{rem}
% et 
%$\epsilon_c=-1$ si $c=a$ et $\epsilon_c=1$ si $c=b$.
%\begin{equation*}
%\epsilon_c=
%\begin{cases}
%-1 & \text{si } c=a,
%\\
%1 & \text{si } c=b.
%\end{cases}
%\end{equation*}

\subsection{Riemann boundary value problem}
\label{C:sec:pbmrieman}

In a standard way, a boundary value problem is composed of a regularity condition on a domain and a boundary condition on that domain. %the edges of that domain.

\begin{deff}[Riemann BVP]
We say that $\Phi$ satisfies a Riemann BVP on $\mathcal{L}$ if:
\begin{itemize}
\item $\Phi$ is sectionally holomorphic on $\mathbb{C}\setminus\mathcal{L}$ and admits $\Phi^+$ as left limit and $\Phi^-$ as right limit, $\Phi$ if of finite degree at infinity;
\item $\Phi$ satisfies the boundary condition
$$
\Phi^+(t)=G(t)\Phi^-(t) +g(t),
\quad t\in\mathcal{L}
$$
where $G$ and $g$ are functions defined on $\mathcal{L}$.
\end{itemize}
\label{def:riemannBVP}
\end{deff}
We assume here that $G$ and $g\in \mathbb{H}_\mu (\mathcal{L})$ and that $G$ doesn't cancel on $\mathcal{L}$.
When $g=0$ we talk about a homogeneous Riemann BVP.

\subsubsection{Closed contour}

We assume that the contour $\mathcal{L}$ is closed and we denote $\mathcal{L}^+$ the open bounded set of boundary $\mathcal{L}$, and $\mathcal{L}^-$ 
the complementary of $\mathcal{L}^+ \cup \mathcal{L}$.
\begin{figure}[hbtp]
\centering
\includegraphics[scale=1]{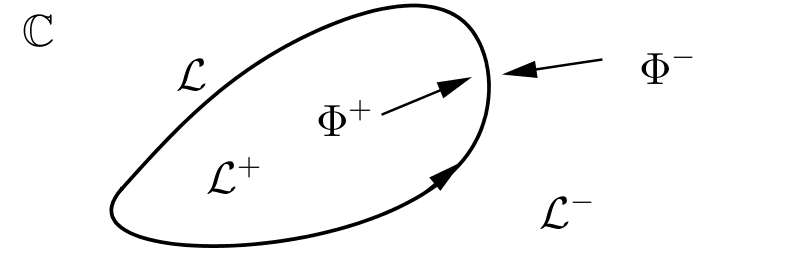}
\caption{Oriented closed smooth contour $\mathcal{L}$, domains $\mathcal{L}^+$ and $\mathcal{L}^-$ and limit values of $\Phi$ on the right and on the left}
\end{figure}

To solve the Riemann BVP we need to introduce the \textit{index}
$$
\chi:=\frac{1}{2i\pi} [\log G]_{\mathcal{L}} = \frac{1}{2\pi} [\arg G]_{\mathcal{L}} 
$$
which quantifies the variation of the argument of $G$ on the contour $\mathcal{L}$ in the positive direction.
Without any loss of generality we assume that $0$ is in $\mathcal{L}^+$. It is then possible to define the single-valued function
$$
\log (t^{-\chi}G(t)), \quad t\in\mathcal{L}
$$
which satisfies the Hölder condition.

\begin{prop}[Solution of homogeneous Riemann BVP on a closed contour]
Let us define
$$
\Gamma (z):= \frac{1}{2i\pi} \int_{\mathcal{L}} \frac{\log(t^{-\chi}G(t))}{t-z} \, \mathrm{d}t, \quad z\notin \mathcal{L}
$$
and
$$
X(z):=
\begin{cases}
%X^+(z)= 
\exp \Gamma (z), \quad z\in\mathcal{L}^+,
\\
%X^-(z)=
 z^{-\chi}\exp \Gamma (z), \quad z\in\mathcal{L}^-.
\end{cases}
$$
The function $X$ is the fundamental solution of the homogeneous Riemann BVP of Definition \ref{def:riemannBVP}, i.e. $X$ satisfies the boundary condition $X^+(t)=G(t)X^-(t)$ for $t\in\mathcal{L}$.
The function $X$ is of degree $-\chi$ at infinity. If $\Phi$
is a solution of the homogeneous Riemann BVP, then $\Phi(z)=X(z)P(z)$ where $P$ is a polynomial.
\end{prop}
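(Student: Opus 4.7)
The plan is to reduce the multiplicative jump condition $X^+(t) = G(t) X^-(t)$ to the additive Sokhotski-Plemelj setting by passing to logarithms, after first absorbing the topological obstruction given by the index.

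First I would observe that the logarithm of $G$ may fail to be single-valued on $\mathcal{L}$: going around the closed contour, its imaginary part picks up a variation of $2\pi \chi$ by definition of the index. To remove this obstruction, I would introduce the auxiliary function $t \mapsto t^{-\chi} G(t)$, whose index on $\mathcal{L}$ vanishes (this uses the assumption $0 \in \mathcal{L}^+$, so that $t^\chi$ itself has index $\chi$ on $\mathcal{L}$). Therefore $\log(t^{-\chi} G(t))$ admits a continuous single-valued determination on $\mathcal{L}$, and it inherits the Hölder regularity of $G$. Applying the Sokhotski-Plemelj formulas of Proposition~\ref{prop:sokhotski} to the function $\Gamma$ defined by the Cauchy-type integral yields the jump relation
\begin{equation*}
    \Gamma^+(t) - \Gamma^-(t) = \log(t^{-\chi} G(t)), \qquad t \in \mathcal{L}.
\end{equation*}

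Next I would check directly that $X$, as defined piecewise, solves the homogeneous Riemann BVP. Indeed $X$ is sectionally holomorphic and nowhere vanishing on $\mathbb{C} \setminus \mathcal{L}$ (the exponential is nonzero, and the factor $z^{-\chi}$ only appears in $\mathcal{L}^-$, where it stays away from $0$). Taking limits on $\mathcal{L}$ and using the jump of $\Gamma$ gives
\begin{equation*}
    \frac{X^+(t)}{X^-(t)} = t^\chi \exp\bigl(\Gamma^+(t) - \Gamma^-(t)\bigr) = t^\chi \cdot t^{-\chi} G(t) = G(t),
\end{equation*}
which is the desired boundary condition. For the behavior at infinity, the Cauchy kernel forces $\Gamma(z) \to 0$ as $|z|\to\infty$, hence $\exp\Gamma(z) \to 1$ and $X(z) \sim z^{-\chi}$ on $\mathcal{L}^-$, so $X$ is of degree $-\chi$ at infinity.

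Finally, to describe the general solution, I would let $\Phi$ be any other solution of the homogeneous BVP and consider the ratio $\Phi/X$. Because both $\Phi$ and $X$ satisfy $\Phi^{\pm} = G \Phi^{\mp}\text{-type}$ identities with the same factor $G$, the ratio has matching left and right limits on $\mathcal{L}$:
\begin{equation*}
    \frac{\Phi^+(t)}{X^+(t)} = \frac{G(t)\Phi^-(t)}{G(t)X^-(t)} = \frac{\Phi^-(t)}{X^-(t)}, \qquad t\in\mathcal{L}.
\end{equation*}
Since $X$ is nonzero off $\mathcal{L}$, the quotient $\Phi/X$ extends to an entire function by analytic continuation across $\mathcal{L}$; by hypothesis both $\Phi$ and $X$ are of finite degree at infinity, hence so is $\Phi/X$, and Liouville's theorem forces it to be a polynomial $P$, giving $\Phi = X P$.

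The main subtlety I expect is the bookkeeping around the index: one has to verify carefully that the factor $t^{-\chi}$ on $\mathcal{L}$ lifts consistently to the sectionally holomorphic $z^{-\chi}$ on $\mathcal{L}^-$ (using a branch cut that avoids $\mathcal{L}^+$), and that this lift is precisely what is needed so that $X$ is single-valued on $\mathcal{L}^-$ and matches the required degree at infinity. Everything else is a direct computation using Sokhotski-Plemelj and the continuity of the exponential.
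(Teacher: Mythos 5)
Your proposal is correct and follows essentially the same route as the paper: apply Sokhotski--Plemelj to $\Gamma$ to get the jump of $\log(t^{-\chi}G(t))$, verify $X^+/X^- = t^{\chi}\cdot t^{-\chi}G(t) = G(t)$, and conclude via analytic continuation of $\Phi/X$ and Liouville's theorem. The only cosmetic difference is that the paper writes out the one-sided limits $X^{\pm}$ explicitly via the sum formula (introducing $\widetilde{\Gamma}$ and the square roots $\sqrt{t^{-\chi}G(t)}$) whereas you use the equivalent difference formula directly.
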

If we denote $k$ the degree of $P$, the solution $\Phi$ is of degree $k-\chi$ at infinity. The fundamental solution $X$ of degree $-\chi$ is then the non-zero homogeneous solution of smallest degree to infinity.
\begin{proof}
For $t\in \mathcal{L}$, let us denote $\widetilde{\Gamma} (t)= \frac{1}{2i\pi} \int_{\mathcal{L}} \frac{\log(s^{-\chi}G(s))}{s-t} \, \mathrm{d}s$ where the integral is understood in the sense of principal value. Sokhotski-Plemelj formulas applied at $\Gamma$ show that%, for $t\in\mathcal{L}$ 
\begin{equation}
X^+(t)= e^{\widetilde{\Gamma}(t)} \sqrt{t^{-\chi}G(t)}
\quad \text{and} \quad
X^-(t)=t^{-\chi} e^{\widetilde{\Gamma}(t)} \frac{1}{\sqrt{t^{-\chi}G(t)}}
\quad \text{for } t\in\mathcal{L},
\label{eq:defX+}
\end{equation}
and then that $X$ is a solution of the homogeneous problem. If $\Phi$ is a solution of the problem, as $X^\pm (z) \neq 0$ for $z\in\mathcal{L}$ we obtain
$$
\frac{\Phi^+}{X^+}(z)=\frac{\Phi^-}{X^-}(z),
\quad z\in\mathcal{L}.
$$
By analytic continuation the function $\frac{\Phi}{X}$ is then holomorphic in the whole complex plane, is of finite degree at infinity and is then a polynomial according to Liouville's theorem.
\end{proof}

\begin{prop}[Solution of Riemann BVP on a closed contour]
We define
$$
\phi (z):= \frac{1}{2i\pi} \int_{\mathcal{L}} \frac{g(t)}{X^+ (t)(t-z)} \, \mathrm{d}t, \quad z\notin \mathcal{L}.
$$
The solutions of the Riemann BVP of Definition \ref{def:riemannBVP} are the functions such that
$$
\Phi(z)=X(z)\phi(z)+X(z)P_{\chi}(z)
$$
where $P_{\chi}$ is a polynomial of degree $\chi$ for $\chi\geqslant 0$ and $P_{\chi}=0$ for $\chi\leqslant -1$. 
\end{prop}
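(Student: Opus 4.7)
The plan is to use the fundamental solution $X$ of the associated homogeneous problem, already constructed in the previous proposition, as a ``divider'' in order to reduce the non-homogeneous Riemann BVP to a pure jump problem with $G\equiv 1$. The latter can then be solved explicitly via the Sokhotski-Plemelj formulae together with Liouville's theorem.

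First I would divide the boundary condition $\Phi^+(t) = G(t)\Phi^-(t) + g(t)$ by $X^+(t)$. Since $X^+ = G X^-$ on $\mathcal{L}$ and since formula \eqref{eq:defX+} shows that $X^\pm$ are Hölder continuous and do not vanish on $\mathcal{L}$, the ratio $\Phi/X$ is sectionally holomorphic on $\mathbb{C}\setminus \mathcal{L}$ and the boundary condition rewrites as the pure jump relation
\[
\left(\frac{\Phi}{X}\right)^{\!+}(t) - \left(\frac{\Phi}{X}\right)^{\!-}(t) = \frac{g(t)}{X^+(t)}, \qquad t\in\mathcal{L}.
\]
The right-hand side lies in $\mathbb{H}_\mu(\mathcal{L})$, so Proposition \ref{prop:sokhotski} applies: the Cauchy integral $\phi$ defined in the statement is sectionally holomorphic on $\mathbb{C}\setminus \mathcal{L}$ and satisfies $\phi^+(t)-\phi^-(t)=g(t)/X^+(t)$. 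Consequently $\Phi/X - \phi$ has vanishing jump across $\mathcal{L}$ and extends by analytic continuation to a function holomorphic on all of $\mathbb{C}$.

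Finally I would identify this entire function from its behaviour at infinity. The Cauchy integral $\phi$ vanishes at infinity, and $\Phi/X$ is of finite degree there because $\Phi$ is of finite degree by hypothesis and $X$ is of degree $-\chi$. Liouville's theorem then forces $\Phi/X - \phi$ to be a polynomial $P$, so $\Phi = X\phi + XP$. Comparing degrees at infinity pins down the admissible $P$. For $\chi \geq 0$, $X$ decays like $z^{-\chi}$ at infinity, so any polynomial of degree at most $\chi$ keeps $\Phi$ of finite degree, which yields the $(\chi+1)$-parameter family $P_\chi$. For $\chi \leq -1$, $X$ grows like $z^{|\chi|}$, so the finite-degree constraint forces $P\equiv 0$ (and existence then requires in addition the classical vanishing of $|\chi|$ moments of $g/X^+$ along $\mathcal{L}$). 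The only mildly delicate point is the reduction step itself: verifying that $X^\pm$ is non-vanishing and Hölder on $\mathcal{L}$ and that $\Phi/X$ inherits sectional holomorphy. Both of these follow directly from the exponential formula \eqref{eq:defX+} and the Hölder regularity of $\log(t^{-\chi}G(t))$, so no new analytic input is required beyond what was established for the homogeneous case.
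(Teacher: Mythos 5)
Your proposal is correct and follows essentially the same route as the paper: divide the boundary condition by $X^+$, use the factorization $G = X^+/X^-$ to reduce to the pure jump problem $(\Phi/X)^+-(\Phi/X)^-=g/X^+$, and conclude via Sokhotski--Plemelj and Liouville (the paper invokes Remark \ref{C:rem:sechol} for exactly this step). Your additional degree bookkeeping at infinity, pinning down $P_\chi$ and noting the solubility conditions for $\chi\leqslant -1$, only makes explicit what the paper leaves implicit.
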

\begin{rem}[Left limit $X^+$]
We have $X^+(t) =(t-b)^{-\chi} e^{\Gamma^+(t)}$ where 
$\Gamma^+(t)$
%$$= \frac{1}{2} f(t) +\frac{1}{2i\pi} \int_{\mathcal{L}} \frac{f(s)}{s-t} \mathrm{d}s$$ 
is the left limit value of $\Gamma$ on $\mathcal{L}$ given by the Sokhotski-Plemelj formulas of Proposition \ref{prop:sokhotski}, see \eqref{eq:defX+}.
\end{rem}
\begin{rem}[Solubility conditions]
For $\chi< -1$ the solutions are holomorphic at infinity (and then bounded) if and only if the following conditions are satisfied:
\begin{equation}
\int_{\mathcal{L}} \frac{g(t) t^{k-1}}{X^+(t)} \, \mathrm{d}t=0,
\quad k=1, \cdots, -\chi -1.
\label{eq:solubility_condition}
\end{equation}
\end{rem}
\begin{proof}
The fundamental solution $X^\pm$ does not cancel on $\mathcal{L}$ and we have the factorisation $G=\frac{X^+}{X^-}$. If $\Phi$ is a solution of the BVP we have 
$$
\frac{\Phi^+}{X^+}(t)=\frac{\Phi^-}{X^-}(t)+\frac{g}{X^+}(t), \quad t\in\mathcal{L}.
$$
The function $\frac{\Phi}{X}$ being of finite degree at infinity, Remark \ref{C:rem:sechol} gives $\frac{\Phi}{X}=\phi+P$.
\end{proof}

\subsubsection{Open contour}

We assume that the function $\Phi$ we are looking for satisfies the Riemann BVP on an open contour oriented from %extremity 
\rems{end} $a$ to \rems{end} $b$ and that $\Phi$ is bounded at the %vicinity 
\rems{neighborhood} of $a$ and $b$.
More generally, one could look for the solutions admitting singularities integrable at the ends.
%qu'au voisinage d'une extrémité $c=a$ ou $b$ on a
%$$
%|\Phi^\pm (t)|< \frac{const.}{|t-c|^\alpha}
%\quad 0\leqslant \alpha <1.
%$$
We denote $\delta$, $\Delta$, $\rho_a$ and $\rho_b$ such that
$$
G(a)=\rho_a e^{i\delta},
\quad
\Delta = [\arg G]_{\mathcal{L}}
\quad \text{et} \quad
G(b)=\rho_b e^{i(\delta+\Delta)}
$$
choosing $-2\pi < \delta \leqslant 0$ and the corresponding determination of the logarithm $\log G$.
We define the index
$$
\chi:= \left\lfloor \frac{\delta+\Delta}{2\pi} \right\rfloor.
$$
\begin{prop}[Solution of Riemann BVP on an open contour]
Let us define
$$
\Gamma (z):= \frac{1}{2i\pi} \int_{\mathcal{L}} \frac{\log(G(t))}{t-z} \, \mathrm{d}t, \quad z\notin \mathcal{L}.
$$
The function
$$
X(z):= (z-b)^{-\chi} e^{\Gamma(z)}
$$
is a solution of the homogeneous Riemann BVP and is bounded at the \rems{ends}. This solution is of order $-\chi$ at infinity. If $\Phi$ is a solution of the homogeneous problem, it may be written as $\Phi(z)=X(z)P(z)$ where $P$ is a polynomial. We define
$$
\phi (z):= \frac{1}{2i\pi} \int_{\mathcal{L}} \frac{g(t)}{X^+ (t)(t-z)} \, \mathrm{d}t, \quad z\notin \mathcal{L}.
$$
The solutions of the Riemann BVP bounded at the \rems{ends} are the functions
$$
\Phi(z)=X(z)\phi(z)+X(z)P_{\chi}(z)
$$
where $P_{\chi}$ is a polynomial of degree $\chi$ for $\chi\geqslant 0$ and $P_{\chi}=0$ for $\chi\leqslant -1$.
\end{prop}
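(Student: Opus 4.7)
The plan is to mimic the structure of the closed-contour case: first produce an explicit nonzero solution $X$ of the homogeneous problem with prescribed behavior at the endpoints and at infinity, then reduce the general (inhomogeneous) problem to a Sokhotski--Plemelj computation for the quotient $\Phi/X$. Since $\chi=\lfloor(\delta+\Delta)/(2\pi)\rfloor$ is an integer, the factor $(z-b)^{-\chi}$ in the definition of $X$ is a meromorphic function on $\mathbb{C}\setminus\{b\}$ and introduces no jump across $\mathcal{L}$; the entire jump of $X$ therefore comes from $e^{\Gamma}$.

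For the homogeneous boundary condition, I would note that because $\mathcal{L}$ is an \emph{open} arc, the chosen determination of $\log G$ is single-valued and H\"older-continuous on $\mathcal{L}$, so Sokhotski--Plemelj (Proposition~\ref{prop:sokhotski}) applied to the Cauchy integral $\Gamma$ gives $\Gamma^+(t)-\Gamma^-(t)=\log G(t)$ for $t\in\mathcal{L}$. Exponentiating and multiplying by the continuous factor $(z-b)^{-\chi}$ yields $X^+(t)/X^-(t)=G(t)$, as required. For boundedness at the endpoints I invoke Remark~\ref{C:rem:etrem}: near $b$ one has $\Gamma(z)=\tfrac{\log G(b)}{2i\pi}\log(z-b)+\Gamma_b(z)$ with $\Gamma_b$ holomorphic, so
\begin{equation*}
X(z)=(z-b)^{-\chi+\log G(b)/(2i\pi)}e^{\Gamma_b(z)},
\end{equation*}
whose exponent has real part $(\delta+\Delta)/(2\pi)-\chi\in[0,1)$ by the definition of $\chi$; hence $X$ is bounded at $b$. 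A symmetric computation at $a$ gives an exponent with real part $-\delta/(2\pi)\in[0,1)$ (using $\delta\in(-2\pi,0]$), so $X$ is bounded at $a$ as well. The order at infinity is immediate: $\Gamma(z)=O(1/z)$ as a Cauchy integral over a bounded arc, so $e^{\Gamma(z)}\to 1$ and $X(z)\sim z^{-\chi}$.

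For uniqueness in the homogeneous case, I would show that any solution $\Phi$ makes $\Phi/X$ sectionally holomorphic with $(\Phi/X)^+=(\Phi/X)^-$ on $\mathcal{L}$; analytic continuation produces a function holomorphic on $\mathbb{C}\setminus\{a,b\}$, and boundedness at the endpoints (both $\Phi$ and $X$ being bounded and $X$ nonvanishing near $a,b$ except for the controlled logarithmic factors) makes these singularities removable. Finite degree at infinity plus Liouville then forces $\Phi/X$ to be a polynomial $P$. For the inhomogeneous case, dividing the boundary relation by $X^+=GX^-$ gives $(\Phi/X)^+(t)-(\Phi/X)^-(t)=g(t)/X^+(t)$ on $\mathcal{L}$; the right-hand side is H\"older-continuous on every compact subarc, so Remark~\ref{C:rem:sechol} identifies $\phi$ as the unique sectionally holomorphic function with this jump that vanishes at infinity, and the general finite-degree sectionally holomorphic solution is $\phi+P$. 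Requiring $\Phi=X(\phi+P)$ to be bounded at the endpoints and of finite order at infinity then pins down $\deg P\leqslant\chi$ when $\chi\geqslant 0$ and $P=0$ when $\chi\leqslant-1$.

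The main obstacle I anticipate is the endpoint analysis of $X\phi$. The Cauchy integral $\phi$ generically has logarithmic singularities at $a$ and $b$ (again by Remark~\ref{C:rem:etrem}), while $X$ has a fractional power behavior with exponent in $[0,1)$; one has to check carefully that the product stays bounded at the endpoints, which amounts to verifying that the fractional-power zero of $X$ (strict unless the exponent is $0$) compensates the logarithmic blow-up of $\phi$, and handling the borderline case where the exponent vanishes. This endpoint bookkeeping, together with the precise determination of the allowed polynomial degree from the requirement of being bounded at $a,b$ and of finite order at infinity, is what forces the cutoff at $\chi\geqslant 0$ versus $\chi\leqslant-1$.
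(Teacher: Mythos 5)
Your proposal follows the same route as the paper: the endpoint exponents $\lambda_a,\lambda_b$ of $e^{\Gamma}$ are extracted via Remark~\ref{C:rem:etrem}, boundedness at $a$ and $b$ follows from $\Re\lambda_a=-\delta/(2\pi)\in[0,1)$ and $\Re\lambda_b-\chi=(\delta+\Delta)/(2\pi)-\chi\in[0,1)$, and the rest reduces to the closed-contour argument (quotient $\Phi/X$, Sokhotski--Plemelj, Liouville). The paper compresses that last reduction into ``similar to the closed contour case,'' so your extra care about the endpoint behavior of $X\phi$ and the degree cutoff for $P_\chi$ is consistent with, and somewhat more explicit than, what the paper records.
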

\begin{proof} 
Due to Remark \ref{C:rem:etrem}, \rems{in the
neighborhood of one end} $c$ we have
$$
e^{\Gamma (z)}=(z-c)^{\lambda_c} e^{\Gamma_c(z)}
$$
for $\Gamma_c$ a holomorphic function \rems{in the
neighborhood} of $c$ and
$$
\lambda_a =-\frac{\delta}{2\pi}+i\frac{\log \rho_a}{2\pi}
\quad \text{and} \quad
\lambda_b =\frac{\delta+\Delta}{2\pi}-i\frac{\log \rho_b}{2\pi}.
$$
Since $\delta \leqslant 0$ the function $e^{\Gamma(z)}$ is bounded at $a$.
Furthermore, we notice that the function $X(z)=(z-b)^{-\chi} e^{\Gamma(z)}$ is bounded at $b$ (and at $a$). The rest of the proof is similar to the closed contour case.
\end{proof}

\subsection{Carleman boundary value problem \rems{with shift}}

\remst{The Carleman BVP is a boundary problem with }%a shift noted $\alpha(t)$, which is a homeomorphism from the contour $\mathcal{L}$ on itself such that its derivative does not cancel and that satisfies Hölder's condition. 
\rems{A \textit{shift} $\alpha(t)$ is a} homeomorphism from the contour $\mathcal{L}$ on itself such that its derivative does not cancel and \rems{which} satisfies Hölder's condition. 
Most of the time  the condition $\alpha(\alpha(t))=t$ is satisfied and we say that $\alpha$ is a \textit{Carleman automorphism} of $\mathcal{L}$. In this paper the shift function is the complex conjugation. 

\begin{deff}[Carleman BVP]
\label{C:def:carleman}
The function $\Phi$ satisfies a Carleman BVP on the closed contour $\mathcal{L}$ (or having its two \rems{ends} at infinity, as in this paper) if:
\begin{itemize}
\item $\Phi$ is holomorphic on the whole domain $\mathcal{L}^+$ bounded by $\mathcal{L}$ and continuous on $\mathcal{L}$; %$\Phi$ est de degré fini à l'infini;
\item $\Phi$ satisfies the boundary condition
$$
\Phi(\alpha(t))=G(t)\Phi(t) +g(t),
\quad t\in\mathcal{L},
$$
where $G$ and $g$ are two functions defined on $\mathcal{L}$.
\end{itemize}
\end{deff}
We will assume that $G$ and $g\in \mathbb{H}_\mu (\mathcal{L})$ and that $G$ does not cancel on $\mathcal{L}$.
When $g=0$ the Riemann BVP is said to be homogeneous.

To solve the Carleman BVP %the idea is to
we introduce a conformal \rems{glueing} function. The following result establishes the existence of such functions. 
\begin{prop}[Conformal \rems{glueing} function]
\label{C:prop:collage}
Let $\alpha$ be a Carleman automorphism of the curve $\mathcal{L}$. It exists $W$, a function
\begin{itemize}
\item holomorphic on $\mathcal{L}^+$ deprived of one point where $W$ has a simple pole;
\item satisfying the \rems{glueing} condition
$$
W(\alpha(t))=W(t),
\quad
t\in\mathcal{L}.
$$
Such a function $W$ establishes a conformal transform (holomorphic bijection) from $\mathcal{L}^+$ to the complex place deprived of a smooth open contour $\mathcal{M}$. This conformal \rems{glueing} function admits two fixed points $A$ and $B$ of image $a$ and $b$ which are the \rems{ends} of $\mathcal{M}$.
\end{itemize}
\end{prop}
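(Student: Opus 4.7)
The plan is to use a classical doubling argument combined with the uniformization theorem for simply connected Riemann surfaces. Given the Carleman involution $\alpha$ on $\mathcal{L}$ (with $\alpha\circ\alpha = \mathrm{id}$ and two fixed points $A,B$), I would form the topological double $\mathcal{S}$ by taking two copies $\overline{\mathcal{L}^+}$ and $\overline{\mathcal{L}^+}'$ of the closed domain, carrying opposite complex orientations, and identifying each boundary point $t\in\mathcal{L}$ with the corresponding point $\alpha(t)\in\mathcal{L}'$. Since $\mathcal{L}^+$ is simply connected (a Jordan domain), $\mathcal{S}$ is homeomorphic to a sphere.

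Next I would endow $\mathcal{S}$ with a complex structure. Inside each copy of $\mathcal{L}^+$ the given holomorphic structure is used. At an interior boundary point $t\in\mathcal{L}\setminus\{A,B\}$, one uses a local reflection chart: because $\alpha$ is a Hölder homeomorphism with non-vanishing derivative, near $t$ one can conformally map a neighborhood of $t$ in $\mathcal{L}^+$ to an upper half-disk, and a neighborhood of $\alpha(t)$ in $\mathcal{L}^+{}'$ to a lower half-disk, so that the two charts paste analytically across the real segment, by the Schwarz reflection principle. At the fixed points $A$ and $B$, the gluing is two-to-one; here the natural coordinate is obtained by composing a boundary-straightening chart with the square map $z\mapsto z^2$, which produces a genuine complex chart because $\alpha$ acts locally as the reflection $z\mapsto -z$ up to conformal change of variable.

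With $\mathcal{S}$ now a compact simply connected Riemann surface, the uniformization theorem gives a biholomorphism $\Psi:\mathcal{S}\to\hat{\mathbb{C}}$, unique up to post-composition by a Möbius transformation. I would then define $W$ as the restriction of $\Psi$ to the first copy of $\mathcal{L}^+$. The Möbius freedom allows me to prescribe that $\Psi$ sends one chosen interior point of $\mathcal{L}^+$ to $\infty$, so that $W$ is holomorphic on $\mathcal{L}^+$ except for a simple pole at that point. The gluing condition $W(\alpha(t))=W(t)$ on $\mathcal{L}$ is built into the construction: the two boundary values of $\Psi$ at a paired pair $(t,\alpha(t))$ coincide because they come from a single point of $\mathcal{S}$. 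The image of $\mathcal{L}$ under $W$ is then the common boundary of $\Psi(\mathcal{L}^+)$ and $\Psi(\mathcal{L}^+{}')$, traversed twice, so $W(\mathcal{L}^+)=\hat{\mathbb{C}}\setminus\mathcal{M}$ where $\mathcal{M}=\Psi(\mathcal{L})$ is a smooth arc; the endpoints of $\mathcal{M}$ are precisely the images of the ramification points $A$ and $B$, since these are the only boundary points that are not doubled in $\mathcal{S}$.

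I expect the main technical hurdle to be the verification of the complex structure at the fixed points $A$ and $B$. Everything else (the topological doubling, the reflection charts away from fixed points, and the final appeal to uniformization) is standard; the delicate step is showing that the ``squaring'' chart really produces a holomorphic atlas, which requires that $\alpha$ be conjugate to a genuine reflection near each fixed point. For the Carleman automorphism $t\mapsto\bar{t}$ relevant to the paper this is essentially automatic, but it is the one step where the specific regularity of $\alpha$ matters.
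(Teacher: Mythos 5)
The paper itself gives no proof of this proposition --- Appendix \ref{appendix:BVP} quotes it from the reference books of Litvinchuk, Muskhelishvili and Gakhov --- so you should be measured against the standard argument, which is indeed the welding--plus--uniformization scheme you outline. But your implementation of the first step is wrong: the \emph{double} is not the right surface. If you take two copies of $\overline{\G}$-type domains $\overline{\mathcal{L}^+}$ and identify $[t]_1$ with $[\alpha(t)]_2$, then for a non-fixed $t$ the two points $[t]_1$ and $[\alpha(t)]_1$ remain \emph{distinct} points of $\mathcal{S}$ (the first is glued to $[\alpha(t)]_2$, the second to $[t]_2$), so $\Psi([t]_1)\neq\Psi([\alpha(t)]_1)$ in general and the restriction of $\Psi$ to the first copy does \emph{not} satisfy $W(\alpha(t))=W(t)$; your sentence ``they come from a single point of $\mathcal{S}$'' is false for the double. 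The same error shows up in the image: $\Psi$ sends the interior of the second copy onto a nonempty open set disjoint from $\Psi(\mathcal{L}^+)$, so the complement of $W(\mathcal{L}^+)$ is a closed Jordan domain, not a smooth arc $\mathcal{M}$.

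The correct construction is the \emph{self-welding} of a single copy: form the quotient $\overline{\mathcal{L}^+}/(t\sim\alpha(t))$. Because $\alpha$ is an orientation-reversing involution of the Jordan curve $\mathcal{L}$ with exactly two fixed points $A,B$, this quotient is topologically a sphere and the boundary descends to an arc with ends $[A]$ and $[B]$. Away from $A,B$ the complex structure at an identified pair $\{t,\alpha(t)\}$ comes from conformally sewing the two half-disk neighbourhoods of $t$ and of $\alpha(t)$ along their diameters via $\alpha$ --- note this is a genuine welding of two different boundary arcs of the same domain, not a Schwarz reflection, and its conformality is exactly where the hypotheses that $\alpha'$ is H\"older and non-vanishing are used. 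At the fixed points your squaring chart is the right local model. Uniformization and the M\"obius normalization then give the pole, the gluing condition (now honestly, since $[t]=[\alpha(t)]$), and the bijection onto $\widehat{\mathbb{C}}\setminus\mathcal{M}$. A useful sanity check is $\alpha(e^{i\vartheta})=e^{-i\vartheta}$ on the unit disk, for which $W(z)=\tfrac12(z+z^{-1})$: a single copy of the disk already covers all of $\widehat{\mathbb{C}}\setminus[-1,1]$, confirming that doubling would be one copy too many.
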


If we find such a conformal \rems{glueing} function, we can transform the Carleman BVP into a Riemann BVP.
We orient $\mathcal{M}$ from $a$ to $b$
choosing it such that the orientation of $\mathcal{L}$ be conserved by $W$. We then denote $W^{-1}$ the reciprocal of $W$ and $(W^{-1})^+$ its left limit and $(W^{-1})^-$ its right limit on $\mathcal{M}$. See Figure \ref{C:fig1}.
For $t$ on the arc $\mathcal{L}$ oriented from $B$ to $A$, these functions satisfy
$$
(W^{-1})^+(W(t))=\alpha(t)
\quad \text{and}\quad
(W^{-1})^-(W(t))=t.
$$
\begin{figure}[hbtp]
\centering
\includegraphics[scale=0.9]{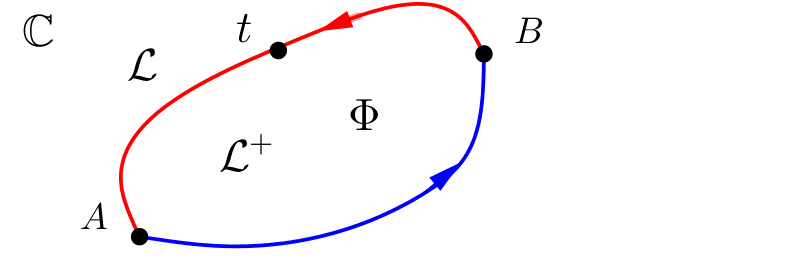}
\includegraphics[scale=0.9]{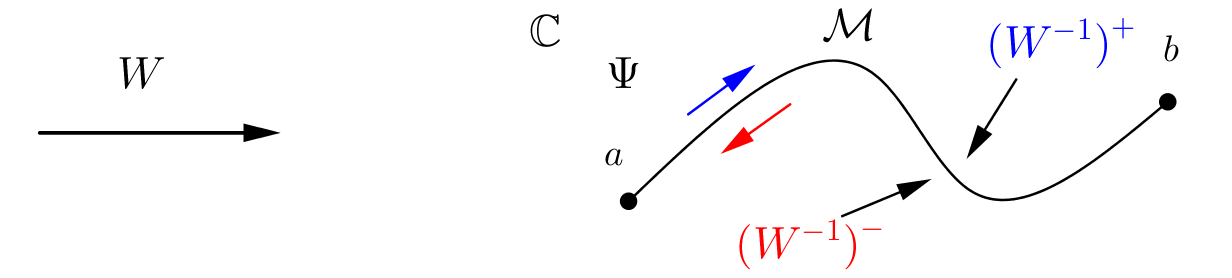}
\caption{Conformal \rems{glueing} function from $\mathcal{L}^+$ to $\mathbb{C}\setminus \mathcal{M}$}
\label{C:fig1}
\end{figure}

Let $\Phi$ be a solution of the Carleman BVP, we define the function $\Psi$ such that
$$
\Psi (W(z)) := \Phi (z),
\quad z\in \mathcal{L}^+.
$$
We then have
$$\Psi(z)= \Phi (W^{-1}(z)),
\quad
z\in\mathbb{C}\setminus \mathcal{M}$$
and the limits on the left and on the right of $\Psi$ on $\mathcal{M}$ are
$$
\Psi^+(t)= \Phi ((W^{-1})^+(t))
\quad \text{and} \quad
\Psi^-(t)= \Phi ((W^{-1})^-(t)),
\quad
t\in \mathcal{M}.
$$
Let
$$
H(t)=G((W^{-1})^-(t))\quad \text{and} \quad
h(t)= g((W^{-1})^-(t)),
\quad
t\in \mathcal{M}.
$$
\begin{prop}
\label{C:prop:pbmriemannassocie}
The function
$\Psi$ satisfies the following Riemann BVP associated to the contour $\mathcal{M}$ and to the functions
$H$ and $h$: %$G((w^{-1})^-(t))$ et $g((w^{-1})^-(t))$:
\begin{itemize}
\item $\Psi$ is sectionally holomorphic on $\mathcal{C}\setminus\mathcal{M}$;
\item $\Psi$ satisfies the boundary condition
$$
\Psi^+(t)=H(t)\Psi^-(t) +h(t),
%=G((w^{-1})^-(t))\Psi^-(t) +g((w^{-1})^-(t))),
\quad t\in\mathcal{M}.
$$
\end{itemize}
\end{prop}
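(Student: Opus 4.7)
The plan is to verify the two bullet points of Definition \ref{def:riemannBVP} directly from the definition $\Psi=\Phi\circ W^{-1}$, using the conformal glueing property of $W$ stated in Proposition \ref{C:prop:collage}.

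\emph{Step 1: Sectional holomorphy of $\Psi$ on $\mathbb{C}\setminus\mathcal{M}$.} By Proposition \ref{C:prop:collage}, $W$ is a holomorphic bijection from $\mathcal{L}^+$ (minus the simple pole) onto $\mathbb{C}\setminus\mathcal{M}$, so $W^{-1}$ is holomorphic on $\mathbb{C}\setminus\mathcal{M}$ with values in $\mathcal{L}^+$. Since $\Phi$ is holomorphic on $\mathcal{L}^+$ by hypothesis of the Carleman BVP, the composition $\Psi=\Phi\circ W^{-1}$ is holomorphic on $\mathbb{C}\setminus\mathcal{M}$. The existence of the boundary limits $\Psi^\pm$ on $\mathcal{M}$ reduces to the existence of $(W^{-1})^\pm$ there, combined with the fact that $\Phi$ extends continuously to the closure $\mathcal{L}\cup\mathcal{L}^+$.

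\emph{Step 2: Boundary condition on $\mathcal{M}$.} This is the core of the argument. Fix $s\in\mathcal{M}$ and write $s=W(t)$ for the unique $t$ on the arc of $\mathcal{L}$ oriented from $B$ to $A$ (the orientation of $\mathcal{M}$ was chosen precisely to preserve the orientation of $\mathcal{L}$). The identities recorded just before the proposition statement give
$$(W^{-1})^+(s)=\alpha(t), \qquad (W^{-1})^-(s)=t.$$
Substituting into the definition of $\Psi^\pm$ yields $\Psi^+(s)=\Phi(\alpha(t))$ and $\Psi^-(s)=\Phi(t)$. Applying the Carleman boundary condition of Definition \ref{C:def:carleman}, namely $\Phi(\alpha(t))=G(t)\Phi(t)+g(t)$, gives
$$\Psi^+(s)=G(t)\Phi(t)+g(t)=G\bigl((W^{-1})^-(s)\bigr)\Psi^-(s)+g\bigl((W^{-1})^-(s)\bigr)=H(s)\Psi^-(s)+h(s),$$
using the definitions $H(s)=G((W^{-1})^-(s))$ and $h(s)=g((W^{-1})^-(s))$.

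There is essentially no obstacle: once the glueing function $W$ and the boundary relations $(W^{-1})^\pm\circ W=\alpha,\mathrm{id}$ are granted, the proof is a one-line substitution. The only bookkeeping is to make sure orientations are chosen consistently so that $(W^{-1})^-$ (the right limit on $\mathcal{M}$) really corresponds to the identity rather than to $\alpha$; this is exactly the orientation convention spelled out in the paragraph preceding the statement. One might also briefly remark that $G,g\in\mathbb{H}_\mu(\mathcal{L})$ together with the Hölder regularity of $(W^{-1})^\pm$ ensure $H,h\in\mathbb{H}_\mu(\mathcal{M})$, so that the resulting Riemann BVP fits the framework of the previous subsection, but this is not strictly part of the stated claim.
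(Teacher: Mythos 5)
Your argument is correct and is exactly the substitution the paper intends: its one-line proof simply invokes Definition \ref{C:def:carleman}, Proposition \ref{C:prop:collage}, and the identities $(W^{-1})^+\circ W=\alpha$, $(W^{-1})^-\circ W=\mathrm{id}$, which you have written out explicitly. No difference in approach, only in level of detail.
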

\begin{proof}
The proof derives from Definition \ref{C:def:carleman}, from Proposition \ref{C:prop:collage} and from the above notations.
\end{proof}
As $\Phi=\Psi \circ W$, to solve the Carleman BVP of Definition \ref{C:def:carleman}, it is enough to determine the conformal \rems{glueing} function $W$ and to find $\Psi$ thanks to Section \ref{C:sec:pbmrieman} which explains how to solve the Riemann BVP Proposition \ref{C:prop:pbmriemannassocie}. Let us define
$$
X(W(z)):= (W(z)-b)^{-\chi} 
\exp  \left(
\frac{1}{2i\pi} \int_{\mathcal{L}_d} \log(G(t))\frac{ W'(t)}{W(t)-W(z)} \, \mathrm{d}t
\right), \quad z\notin \mathcal{L}
$$
and
$$
\phi (W(z)):= \frac{-1}{2i\pi} \int_{\mathcal{L}_d} \frac{g(t)}{X^+ (W(t))} \frac{W'(t) }{(W(t)-W(z))} \, \mathrm{d}t, \quad z\notin \mathcal{L},
$$
where we denote $\mathcal{L}_d=(W^{-1})^- (\mathcal{M})$ (the red curve on the left picture of Figure \ref{C:fig1}).
We obtain the following proposition.
\begin{prop}[Solution of Carleman BVP]
The solutions of the Carleman BVP of Definition~\ref{C:def:carleman} are given by
\begin{equation} 
\Phi(z)=X(W(z))\phi(W(z))+X(W(z))P_{\chi}(W(z))
\end{equation}
where $P_{\chi}$ is a polynomial of degree $\chi$ for $\chi\geqslant 0$ and where $P_{\chi}=0$ for $\chi\leqslant -1$. For $\chi< -1$ the solution to the non-homogeneous problem exists if and only if some solubility conditions of the form \eqref{eq:solubility_condition} are satisfied. 
% the following conditions are satisfied:
%$$
%jh
%$$
\label{C:prop_solution_carleman} 
\end{prop}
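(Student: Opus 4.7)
The plan is to lift the Carleman BVP on the closed contour $\mathcal{L}$ (with shift $\alpha$) to a Riemann BVP on the open contour $\mathcal{M}$ via the conformal glueing function $W$, then apply the already-proven results on the Riemann BVP on an open contour, and finally translate the solution back to the $z$-variable through $W$. The machinery connecting the two BVPs is exactly Proposition~\ref{C:prop:pbmriemannassocie}, so the novelty here lies entirely in the computation that the pulled-back expressions are those of the statement.

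First I would note that if $\Phi$ is any solution of the Carleman BVP, then the function $\Psi(z):=\Phi(W^{-1}(z))$ on $\mathbb{C}\setminus\mathcal{M}$ is a solution of the Riemann BVP with data $(H,h)$ on $\mathcal{M}$ by Proposition~\ref{C:prop:pbmriemannassocie}. Conversely, every solution of that Riemann BVP produces a solution of the Carleman BVP via $\Phi=\Psi\circ W$. Thus the set of solutions $\Phi$ is in bijection with the set of solutions $\Psi$ on $\mathcal{M}$, and we only need to apply the open-contour Riemann result (the proposition that precedes Definition~\ref{C:def:carleman}) to $\Psi$: the general solution is $\Psi(\zeta)=\widetilde X(\zeta)\widetilde\phi(\zeta)+\widetilde X(\zeta)P_\chi(\zeta)$, where $\widetilde X$ and $\widetilde\phi$ are the fundamental solution and the Cauchy transform constructed from $H$ and $h$ on $\mathcal{M}$, and where the index $\chi$ is the same for both problems since $H\circ W=G$ by definition.

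Second I would carry out the change of variable $\zeta=W(t)$ in the Cauchy-type integrals that define $\widetilde X$ and $\widetilde\phi$ on $\mathcal{M}$. Setting $\zeta=W(t)$ and $d\zeta=W'(t)\,\mathrm{d}t$, the contour $\mathcal{M}$ gets parametrised by $\mathcal{L}_d=(W^{-1})^-(\mathcal{M})$, and
\[
\frac{1}{2i\pi}\int_{\mathcal{M}}\frac{\log H(\zeta)}{\zeta-W(z)}\,\mathrm{d}\zeta
=\frac{1}{2i\pi}\int_{\mathcal{L}_d}\log G(t)\,\frac{W'(t)}{W(t)-W(z)}\,\mathrm{d}t,
\]
using $H(W(t))=G((W^{-1})^-(W(t)))=G(t)$. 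The same substitution turns the Cauchy kernel $1/(\zeta-W(z))$ into $W'(t)/(W(t)-W(z))$ in the integral defining $\widetilde\phi$, and $(\zeta-b)^{-\chi}$ into $(W(z)-b)^{-\chi}$ as a prefactor of the exponential. This shows that $\widetilde X(W(z))=X(W(z))$ and $\widetilde\phi(W(z))=\phi(W(z))$, with exactly the formulas given in the statement (the $-1/(2i\pi)$ sign in $\phi$ is fixed by the induced orientation of $\mathcal{L}_d$ relative to $\mathcal{M}$, which I would check by inspecting Figure~\ref{C:fig1}). Combining this with the Riemann-level solution yields the claimed expression $\Phi(z)=X(W(z))\phi(W(z))+X(W(z))P_\chi(W(z))$.

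Finally, I would address the condition on $P_\chi$ and the solubility conditions when $\chi<-1$. The polynomial degree constraint follows directly from the corresponding constraint at the Riemann level: $\Psi$ must be of finite degree at infinity, which forces $P_\chi$ to be a polynomial of degree at most $\chi$ when $\chi\geq 0$ and forces $P_\chi\equiv 0$ when $\chi\leq -1$. For $\chi<-1$, demanding moreover that $\Psi$ be bounded (or, equivalently, that $\Phi$ extend holomorphically through the image $W^{-1}(\infty)$ inside $\mathcal{L}^+$) produces the orthogonality relations $\int_{\mathcal{M}} h(\zeta)\zeta^{k-1}/\widetilde X^+(\zeta)\,\mathrm{d}\zeta=0$ for $k=1,\dots,-\chi-1$, which by the same change of variable become the analogue of \eqref{eq:solubility_condition} on $\mathcal{L}_d$. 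The main obstacle is the bookkeeping of orientations and singular factors at the endpoints of $\mathcal{M}$ (in particular the sign choice of $(W(z)-b)^{-\chi}$ and the compatibility with the points $A,B\in\mathcal{L}$), but once these are tracked the proof is a direct transcription of the Riemann BVP result through $W$.
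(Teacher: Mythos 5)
Your proposal is correct and follows exactly the route the paper takes (the paper's own "proof" is essentially just the sentence "As $\Phi=\Psi\circ W$ ... it is enough to determine $W$ and to find $\Psi$" together with the change of variable $\zeta=W(t)$ producing the displayed formulas for $X(W(z))$ and $\phi(W(z))$ from the open-contour Riemann BVP proposition). Your write-up makes the same bijection via Proposition~\ref{C:prop:pbmriemannassocie}, the same substitution in the Cauchy integrals, and the same treatment of $P_\chi$ and the solubility conditions, and it appropriately flags the orientation/sign bookkeeping that the paper leaves implicit.
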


%is a solution of the homogeneous Riemann BVP and is bounded at the extremities. This solution is of order $-\chi$ at infinity. If $\Phi$ is a solution of the homogeneous problem, it may be written $\Phi(z)=X(z)P(z)$ where $P$ is a polynomial.
%
%On définit
%Les solutions du problème frontière de Riemann non-homogène bornées aux extrémités sont de la forme

\section{Green's functions in dimension one}
\label{appendix:dim1}

This appendix is intended to be an educational approach that illustrates in a simple case the analytical method and the link between Green's functions and partial differential equations.
In this section we study $X$ a Brownian motion (in dimension one) with drift reflected at $0$. We are looking for Green's functions of $X$. This problem has already been studied in \cite{Chen_basic_1999}. Here we solve this question thanks to an analytic study which is much simpler than in dimension two. 

%This process is not the absolute value of a Brownian motion with drift, but the the absolute value of a bang-bang process, see \citet{shreve_81_bang}.
%\begin{deff}[Bang-bang process]
%Let $W_t$ be a standard Brownian motion and $\mu>0$ a drift. Un bang-bang process $Y_t$ with parameter $\mu$ is a process that satisfy
%$$
%\mathrm{d} Y_t=\mathrm{d} W_t + \sgn (Y_t) \mu \mathrm{d}t.
%$$
%\end{deff}
%\begin{prop}[Généralisation de l'identité de Lévy]
%Soient $W_t+\mu t$ un mouvement Brownien avec dérive $\mu$ issu de $0$ et $S_t=\sup_{ s \leqslant t} (-W_s-\mu s)$. Soient $Y_t$ un processus bang-bang de paramètre $\mu$ et ${\rems{L(t)}}^0$ son temps local en $0$.
%%alors $(W_t+\mu t+S_t)$ est la valeur absolue d'un processus bang-bang de dérive $\mu$. 
%On a l'égalité en loi
%$$
%(W_t+\mu t+S_t,S_t ; t\geqslant 0)
%\overset{\text{(loi)}}{=}
%(|Y_t|,L^0_t(Y) ; t\geqslant 0).
%$$
%\end{prop}
%On pourra lire l'article de \citet{CheShi_levythm_99} qui explique la propriété précédente et généralise l'identité de Lévy.
\begin{deff}[Reflected Brownian motion with drift]
\label{intro:def:MBreflechiDim1}
We define $X$, a reflected Brownian motion of variance $\rems{\sigma^2} $, of drift $\mu$ and starting from $x_0\in\mathbb{R}_+$, as the semi-martingale satisfying the equation
$$
{\rems{X(t)}}=x_0 + \rems{\sigma} W\rems{(t)}+\mu t + {\rems{L(t)}},
$$
where ${\rems{L(t)}}$ %est le %moitié du 
is the (symmetric) local time in $0$ of ${\rems{X(t)}}$ and $W_t$ is a standard Brownian motion.
\end{deff} 
%De la même manière que dans \eqref{intro:eq:moitietempsloc} on remarquera que le temps local symétrique ${\rems{L(t)}}$ vaut la moitié du temps local de $X$ en $0$ et est en fait le temps local du processus bang-bang dont $X$ est la valeur absolue.  C'est pour cette raison et pour ne pas alourdir les notations par la suite qu'on ne le note pas $\widetilde{L}_t^0$.

%for all bounded measurable set $A$ we have almost surely
%$$
%%\lim_{t \to \infty} \mu_t (A) = 
%%\lim_{t \to \infty} \int_A {\rems{L(t)}}^a \mathrm{d} a
%%=
%\int_0^\infty \mathds1_A ({\rems{X(s)}}) \mathrm{d} s 
%< \infty$$
%et on pose alors la définition suivante.
%Pour le processus $Z$ partant de $z_0$ il est alors possible de définir 
\begin{deff}[Green measures]
Let $A\subset \mathbb{R}$ be a measurable set. \remst{The }Green's measure of the process $X$ starting from $x_0$ is defined by
$$
\rems{G({x_0},A)} = \mathbb{E}_{x_0} \left( \int_0^\infty \mathds1_A ({\rems{X(s)}}) \, \mathrm{d} s \right)
=  \int_0^\infty \mathbb{P}_{x_0} ({\rems{X(s)}} \in A ) \, \mathrm{d} s.
$$
Its density \rems{with respect to} the Lebesgue measure is denoted $g(x_0,x)$ and is called \remst{the }Green's function. \remst{The }Green's function satisfies
$$
g(x_0,x) = \int_0^\infty p(t,x_0,x) \, \mathrm{d} t,
$$
where $p(t,x_0,x)$ is the transition density of the process $X$.
\end{deff}
If $\mu>0$, the process is transient. \rems{In this case, $
G({x_0},A)<\infty$ for bounded subset $A\subset \mathbb{R}_+$.}
%où $p(t,z_0,z)$ est la densité de transition du processus $Z$ telle que $\mathbb{P}^{z_0} ({\rems{Z(t)}} \in \mathrm{d} z ) =p(t,z_0,z) \mathrm{d} z $.
%de $ \int \varphi (a) {\rems{L(t)}}^a \mathrm{d} a =\int_0^t \varphi (B_s) \mathrm{d} s$
Furthermore notice that if $f:\mathbb{R\to\mathbb{R}_+}$ is measurable, by Fubini's theorem we have
\begin{align*}
%G f(x_0):=
\int_{\mathbb{R}}  f(x) \ g(x_0,x) \, \mathrm{d} x
=\mathbb{E}_{x_0} \left[ \int_0^{\infty}  f({\rems{X(t)}}) \, \mathrm{d} t \right] .
\end{align*}

\begin{prop}[Green's functions and Laplace transform]
If $\mu>0$, for all $x\in\mathbb{R}_+$ \remst{the }Green's function of $X$ is equal to
\begin{equation}
\label{eq:green_func_dim1}
g(x_0,x) = \frac{1}{\mu}e^{\frac{2\mu}{\rems{\sigma^2}} (x-x_0)} \mathbf{1}_{ \{0 \leqslant x < x_0\} } +
\frac{1}{\mu} \mathbf{1}_{ \{ x_0 \leqslant x \} } 
\end{equation}
and its Laplace transform $\psi^{x_0}$ is equal to
$$\psi^{x_0} (\theta) :=\int_0^\infty e^{\theta x} g(x_0,x) \, \mathrm{d}x
= - \frac{e^{\theta x_0} +\theta\frac{\rems{\sigma^2}}{2\mu} e^{-\frac{2\mu}{\rems{\sigma^2}} x_0}}{\mu \theta+\frac{1}{2}\rems{\sigma^2} \theta^2}.$$
%$$
%g_x^{x_0} = \frac{H(x-x_0)}{\mu} (1 - e^{-2 \mu (x-x_0})+\frac{H(x)}{\mu} e^{-2\mu x} e^{-2\mu x_0} \text{  (à vérifier)}
%$$
%où $H$ est la fonction d'Heaviside. 
%On obtient %en particulier
%l'asymptotique
%$g_x^{x_0} \underset{x \to \infty}{\sim} \frac{1}{\mu}$. 
% et $G_{[0,x]}^{x_0} \underset{x \to \infty}{\sim} \frac{x}{\mu}$.
\end{prop}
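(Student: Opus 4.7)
The plan is to mimic, in dimension one, the analytic strategy used throughout the paper: derive a kernel functional equation from It\^o's formula, determine the unknown boundary term by cancelling the kernel at its non-trivial root, read off the explicit formula for $\psi^{x_0}$, and finally recover $g(x_0,\cdot)$ by inverting the Laplace transform (or, more efficiently, by direct verification).

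First I would apply It\^o's formula to $f(z)=e^{\theta z}$ along the semi-martingale $X(t)=x_0+\sigma W(t)+\mu t+L(t)$. Since the generator acts on $f$ by multiplication by $\gamma(\theta):=\mu\theta+\tfrac{1}{2}\sigma^2\theta^2$, and since $L$ grows only at $0$ (so $\int_0^t\theta e^{\theta X(s)}\,\mathrm{d}L(s)=\theta L(t)$), taking expectation gives
\begin{equation*}
\mathbb{E}_{x_0}\!\left[e^{\theta X(t)}\right]-e^{\theta x_0}
=\gamma(\theta)\,\mathbb{E}_{x_0}\!\left[\int_0^t e^{\theta X(s)}\,\mathrm{d}s\right]
+\theta\,\mathbb{E}_{x_0}[L(t)].
\end{equation*}
For $\mathrm{Re}\,\theta<0$, transience ($\mu>0$ implies $X(t)\to\infty$ a.s.) and a domination argument identical to the one used in the proof of Proposition~\ref{propeqfoncgreen} force $\mathbb{E}_{x_0}[e^{\theta X(t)}]\to 0$ as $t\to\infty$. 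Writing $\psi_0^{x_0}:=\mathbb{E}_{x_0}[L(\infty)]$, this yields the one-dimensional functional equation
\begin{equation*}
-\gamma(\theta)\,\psi^{x_0}(\theta)=\theta\,\psi_0^{x_0}+e^{\theta x_0},\qquad \mathrm{Re}\,\theta<0.
\end{equation*}

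Next I would exploit the factorization $\gamma(\theta)=\theta\bigl(\mu+\tfrac{1}{2}\sigma^2\theta\bigr)$. The kernel has two real roots, $0$ and $\theta^{\star}=-2\mu/\sigma^2<0$. Since $\psi^{x_0}(\theta^{\star})$ is finite (the exponential $e^{\theta^{\star}x}$ integrates against $g(x_0,\cdot)$), the numerator of the right-hand side must vanish at $\theta=\theta^{\star}$, forcing
\begin{equation*}
\psi_0^{x_0}=\frac{\sigma^2}{2\mu}\,e^{-\frac{2\mu}{\sigma^2}x_0}.
\end{equation*}
Inserting this into the functional equation gives the announced expression for $\psi^{x_0}$. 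This is the one-dimensional counterpart of the game played in Lemma~\ref{lem:continuation_BM} with the algebraic curve $\gamma=0$, except that in dimension one the kernel is just a quadratic polynomial and no Riemann surface, conformal glueing, or BVP is required: a single scalar unknown is pinned down by a single algebraic relation.

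Finally, to obtain \eqref{eq:green_func_dim1} I would invert the Laplace transform. The quickest route is verification: with the candidate $g(x_0,x)$, split the integral $\int_0^\infty e^{\theta x}g(x_0,x)\,\mathrm{d}x$ at $x=x_0$, compute the two elementary integrals (a geometric one on $[0,x_0]$ and an exponential tail on $[x_0,\infty)$ which converges thanks to $\mathrm{Re}\,\theta<0$), and check that the result coincides with the closed-form $\psi^{x_0}(\theta)$. Alternatively, since $\psi^{x_0}$ is a rational expression in $\theta$ multiplied by the exponentials $e^{\theta x_0}$ and $e^{-\frac{2\mu}{\sigma^2}x_0}$, one may perform a partial-fraction decomposition in $\theta$ and recognize the inverse Laplace transforms of $1/\theta$ and $1/(\theta+\frac{2\mu}{\sigma^2})$ directly. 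The only minor subtlety is keeping track of the indicator functions that encode the position of the observation point $x$ relative to $x_0$; this is cleanly handled by the partial-fraction/convolution identification, and no step here is technically hard.
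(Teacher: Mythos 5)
Your proposal follows essentially the same route as the paper's own proof: Itô's formula applied to $e^{\theta X(t)}$, passage to the limit $t\to\infty$ using transience, reduction of the boundary term to $\theta\,\mathbb{E}_{x_0}[L(\infty)]$ because $L$ grows only on $\{X=0\}$, determination of $\mathbb{E}_{x_0}[L(\infty)]$ by evaluating the functional equation at the kernel root $\theta^{\star}=-2\mu/\sigma^2$, and finally Laplace inversion to recover \eqref{eq:green_func_dim1}. The argument is correct and matches the paper step for step.
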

\begin{proof}
As in the two dimensional case, we are going to determine the Laplace transform of Green's function thanks to a functional equation.
If $f$ is a function $\mathcal{C}^2$, Itô formula gives
\begin{align*}
f({\rems{X(t)}}) &= f(x_0) +  \int_0^t f' ({\rems{X(s)}}) 
\, \mathrm{d} {\rems{X(t)}} 
+ \frac{1}{2} \int_0^t f'' ({\rems{X(s)}}) \, \mathrm{d}  \langle X \rangle_s  
\\
&= f(x_0) +  \int_0^t f' ({\rems{X(s)}}) 
( \mathrm{d} W(t) + \mu \mathrm{d} t + \mathrm{d} {\rems{L(t)}} )
+ \frac{1}{2} \int_0^t f'' ({\rems{X(s)}}) \sigma \, \mathrm{d} s  .
\end{align*}
For $f(x)= e^{\theta x}$ and $\theta <0$ we take the expectation of this formula and we obtain
$$
\mathbb{E}_{x_0} [e^{ \theta {\rems{X(t)}}}] = e^{\theta x_0} + \theta
\underbrace{
\mathbb{E}_{x_0} \left[\int_0^t e^{\theta {\rems{X(s)}}} \,  \mathrm{d} W(s) \right] 
}_{=0 \text{ because it is} \atop \text{the expectation of a martingale} }
+ (\mu \theta+\frac{1}{2}\rems{\sigma^2} \theta^2)  \mathbb{E}_{x_0} \left[\int_0^t e^{\theta {\rems{X(s)}}} \, \mathrm{d} s \right] + \theta  \mathbb{E}_{x_0} \left[\int_0^t e^{\theta {\rems{X(s)}}} \,  \mathrm{d} {\rems{L(s)}} \right].
$$
As $\theta <0$ and as ${\rems{X(t)}} \underset{t \to \infty}{\longrightarrow} \infty$ (as $\mu>0$), we have $\underset{t \to \infty}{\lim} \mathbb{E}[e^{ \theta {\rems{X(t)}}}] =0$. Let $t$ \rems{tend} to infinity. We obtain
\begin{align*}
0 &= e^{\theta x_0} +
(\mu \theta+\frac{1}{2}\rems{\sigma^2} \theta^2)  \mathbb{E}_{x_0} \left[\int_0^\infty e^{\theta {\rems{X(s)}}} \, \mathrm{d} s \right] + \theta  \mathbb{E}_{x_0} \left[\int_0^\infty e^{\theta {\rems{X(s)}}} \, \mathrm{d} {\rems{L(s)}} \right]
\\
&= e^{\theta x_0} + (\mu \theta+\frac{1}{2}\rems{\sigma^2} \theta^2)  \psi^{x_0} (\theta)
+ \theta  \mathbb{E} L({\infty})
\end{align*}
%car $\psi_0  = \mathbb{E} L_{\infty}$ 
as $e^{\theta {\rems{X(s)}}}=1$ on the support of $\mathrm{d} {\rems{L(s)}}$ which is the set $\{s \geqslant 0: {\rems{X(s)}} =0 \}$. %and where we denote 
%$$\psi^{x_0} (\theta) =\int_0^\infty e^{\theta x} g_x^{x_0} \mathrm{d}x= \mathbb{E}_{x_0}\left[\int_0^\infty e^{\theta {\rems{X(s)}}} \mathrm{d} s \right]$$ the Laplace transform of the Green functions of ${\rems{X(t)}}$. 
%Thanks to symetric local time properties and the definition of the Green functions we have
%$\mathbb{E} L_{\infty}= \frac{1}{2} \sigma g(x_0,0)$. % en se souvenant que ${\rems{L(t)}}$ vaut la moitié du temps local de $X$.
By evaluating at $\theta^* = -2\mu / \rems{\sigma^2}$ we find that $\mathbb{E} L({\infty}) = -  \frac{e^{\theta^* x_0}}{\theta^*} = \frac{\rems{\sigma^2} }{2\mu}e^{-\frac{2\mu}{\rems{\sigma^2}} x_0}$.
We obtain
$$
\psi^{x_0} (\theta) = - \frac{e^{\theta x_0} +\theta\frac{\rems{\sigma^2}}{2\mu} e^{-\frac{2\mu}{\rems{\sigma^2}} x_0}}{\mu \theta+\frac{1}{2}\rems{\sigma^2}\theta^2}.
$$
Inverting this Laplace transform we find formula \eqref{eq:green_func_dim1}.
\end{proof}
\begin{rem}[Partial differential equation]
Is is easy to verify that $g(x_0,x)$ satisfies the following partial differential equation
\begin{equation}
\label{eq:PDEdim1}
\begin{cases}
\frac{\rems{\sigma^2}}{2}\frac{\partial^2}{\partial x^2} g(x_0,x)
-\mu  \frac{\partial}{\partial x} g(x_0,x)
 = -\delta_{x_0} (x), & \
 \\
 \rems{\sigma^2}\frac{\partial}{\partial x} g(x_0,0) -2\mu g(x_0,0) =0, & \
 \end{cases}
\end{equation}
which is similar to equation \eqref{eq:robineq} in dimension two.
\end{rem}

\section{\rems{Generalization to a non-positive drift}}
\label{appendix:generalization}

\rems{In this paper, results are obtained for a positive drift: $\mu_1>0$ and $\mu_2>0$. In this appendix, we explain how to generalize these results to transient cases with a non-positive drift, that is when $\mu_1\leqslant 0$ or $\mu_2\leqslant 0$.
First of all, in these cases the ellipse $\gamma=0$ is oriented differently, see Figure~\ref{fig:ellipsegeneralize}. 
\begin{figure}[hbtp]
\centering
\includegraphics[scale=0.6]{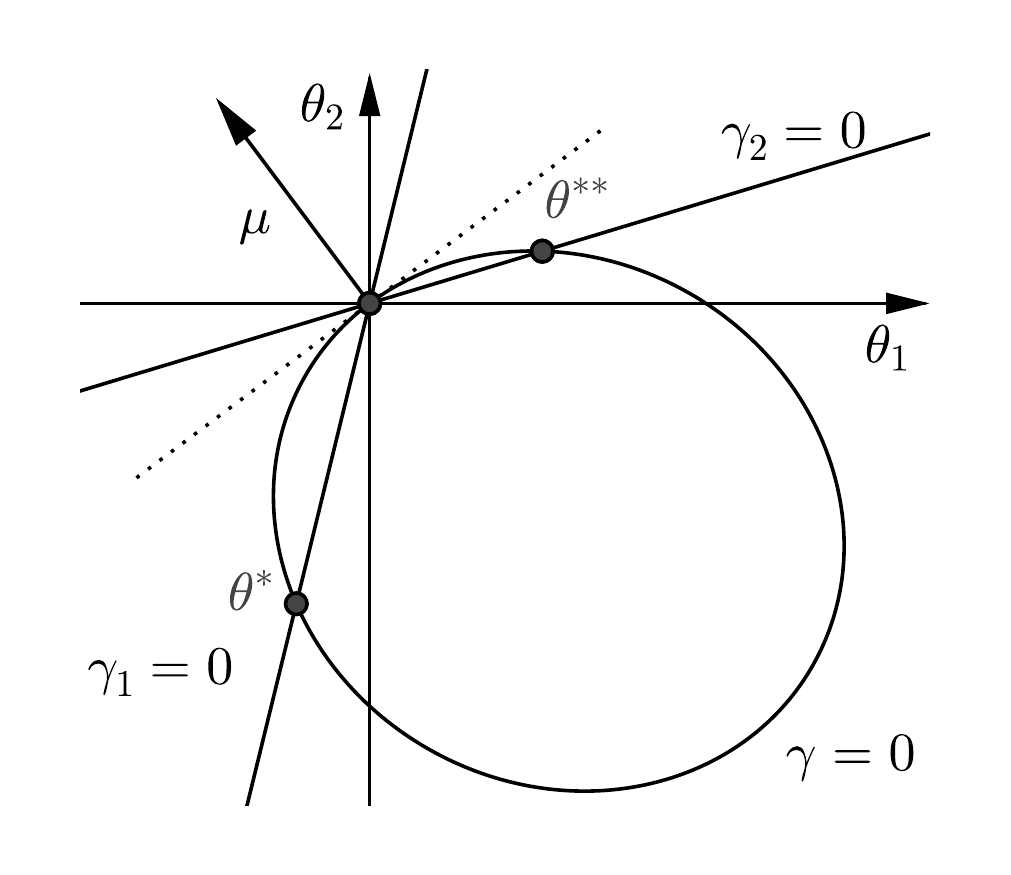}
\includegraphics[scale=0.6]{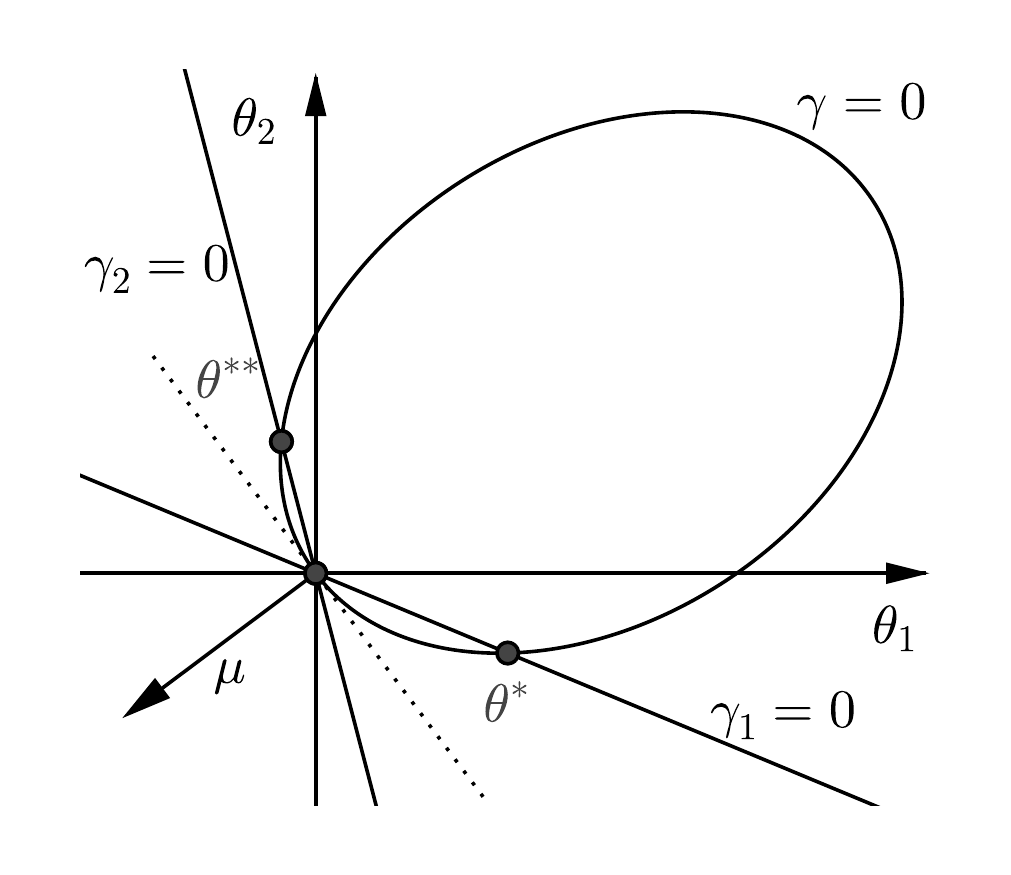}
\caption{On the left $\mu_1<0$ and $\mu_2>0$, on the right $\mu_1<0$ and $\mu_2<0$}
\label{fig:ellipsegeneralize}
\end{figure}
This leads to another set of convergence for the moment generating function. This is the main difference with the case of a positive drift. Analogously to Proposition~\ref{propeqfoncgreen}, we can show that
\begin{itemize}
%\item when $\mu_1>0$ and $\mu_2>0$:
%\begin{itemize}
%\item 
%$\psi_1(\theta_2) $ is finite on $\{\theta_2\in \mathbb{C} : \Re\theta_2\leqslant \theta_2^{**} %\vee 0  
%\}
%$,
%\item
%$\psi_2(\theta_1)$ is finite on $\{\theta_1\in \mathbb{C} : \Re\theta_1 \leqslant \theta_1^* % \vee 0 
%\}
%$,
%\item $\psi(\theta)$ is finite on $\{\theta\in \mathbb{C}^2 : 
%\Re\theta_1 < \theta_1^*\wedge 0 \text{ and } \Re\theta_2 <\theta_2^{**}\wedge 0
%%\vee 0 \text{ and }
%% \Re \theta \cdot \mu <0  
%  \}
%$;
%\end{itemize}
\item 
when $\mu_1>0$ and $\mu_2\leqslant 0$: 
\begin{itemize}
\item $\psi_1(\theta_2) $ is finite on $\{\theta_2\in \mathbb{C} : \Re\theta_2\leqslant \theta_2^{**}  \wedge 0 \}
$,
\item $\psi_2(\theta_1)$ is finite on $\{\theta_1\in \mathbb{C} : \Re\theta_1 < 0  \}
$, 
\item $\psi(\theta)$ is finite on $\{\theta\in \mathbb{C}^2 : \Re\theta_1 < 0 \text{ and } \Re\theta_2\leqslant \theta_2^{**}  \wedge 0 \}
$;
\end{itemize}
\item when $\mu_1 \leqslant 0$ and $\mu_2>0$: \begin{itemize}
\item $\psi_1(\theta_2) $ is finite on $\{\theta_2\in \mathbb{C} :  \Re\theta_2<0 \}
$,
\item $\psi_2(\theta_1)$ is finite on $\{\theta_1\in \mathbb{C} : \Re\theta_1 \leqslant  \theta_1^{*}  \wedge 0  \}
$, 
\item $\psi(\theta)$ is finite on $\{\theta\in \mathbb{C}^2 : \Re\theta_2 < 0 \text{ and } \Re\theta_1 \leqslant  \theta_1^{*}  \wedge 0  \}
$;
\end{itemize}
\item when $\mu_1<0$ and $\mu_2<0$: 
\begin{itemize}
\item $\psi_1(\theta_2) $ is finite on $\{\theta_2\in \mathbb{C} : \Re\theta_2< 0   \}
$,
\item
$\psi_2(\theta_1)$ is finite on $\{\theta_1\in \mathbb{C} : \Re\theta_1 < 0  \}
$, 
\item $\psi(\theta)$ is finite on $\{\theta\in \mathbb{C}^2 : \Re\theta_1 < 0 \text{ and } \Re\theta_2< 0  \}
$.
\end{itemize}
\end{itemize}
In these sets the same functional equation \eqref{eq:functional_eq_green} still holds. As in Lemmas~\ref{lem:continuation_BM} and \ref{lem:domainincluded} but with some small technical differences in the proofs, it is then possible to continue the function $\psi_1$. We can therefore establish the same BVP as in Lemma~\ref{lem:BVP}. The resolution of this BVP is similar and leads to the same formula as \eqref{eq:main}. This generalization is the same phenomenon explained in \cite[\S 3.6]{franceschi_explicit_2017}.}

\bibliographystyle{apalike} % Le style est mis entre accolades.

\bibliography{biblio2,biblio1}
%\bibliography{biblio} % mon fichier de base de donn\'ees s'appelle bibli.bib

\end{document}